\newcolumntype{L}{>{\centering\arraybackslash}m{2.2cm}}
\newcolumntype{R}{>{\centering\arraybackslash}m{1cm}}
\crefname{hypothesis}{Hypothesis}{Hypotheses}
\title{A higher order moment preserving reduction scheme for the Stochastic Weighted Particle Method \thanks{Submitted to the editors DATE.}}
\author{Sonam Lama\thanks{Department of Mathematics, The University of Texas at Dallas, Richardson, TX (\email{sonam.lama@utdallas.edu}, \email{zweck@utdallas.edu}).}
\and John Zweck\footnotemark[2] \and Matthew Goeckner\thanks{Department of Physics, The University of Texas at Dallas, Richardson, TX (\email{goeckner@utdallas.edu}).} }
\DeclareMathOperator{\diag}{diag}
\begin{document}

\maketitle

\begin{abstract}
The Stochastic Weighted Particle Method (SWPM) is a Monte Carlo technique developed by Rjasanow and Wagner that generalizes Bird's Direct Simulation Monte Carlo (DSMC) method for solving the Boltzmann equation. To reduce 
computational cost
due to the gradual increase in the number of stochastic particles in the SWPM, Rjasanow and Wagner proposed several particle reduction schemes designed to preserve specified moments of the velocity distribution. Here, we introduce an improved particle reduction scheme that preserves all moments of the velocity distribution up to the second order, as well as the raw and central heat flux both within each group of particles to be reduced and for the entire system. Furthermore, we demonstrate that with the new reduction scheme the scalar fourth-order moment can be computed more accurately at a reduced computational cost. 
\end{abstract}

\begin{keywords}
Boltzmann equation, stochastic weighted particle method, deterministic particle reduction, higher order moments
\end{keywords}

\begin{AMS}
65C05, 65Z05, 76P05, 82C08
\end{AMS}

\section{Introduction}
\label{sec:intro}
A fundamental problem in the computational modeling of rarefied gases and plasmas is to determine the velocity probability density function (pdf) of each particle species. The evolution of these velocity pdfs is governed by the Boltzmann equation, which models particle transport and collision processes \cite{bittencourt2013fundamentals,boltzmann1970weitere,cercignani1988boltzmann}. Both deterministic and stochastic particle methods are used to solve the Boltzmann equation. Although deterministic methods avoid the uncertainties inherent in stochastic approaches, the cost of computing the Boltzmann collision operator can still be prohibitively high, especially in the low probability tails of the pdf. However, recent theoretical advances in combination with increased computational power have led to the introduction of several promising deterministic spectral methods \cite{gamba2009upper,gamba2014conservative,gamba2017fast,gamba2018galerkin,Mouhot:2006,Pareschi:2002,tran2013nonlinear}. For example, Gamba and Rjasanow recently proposed a Petrov-Gelerkin method whose computational efficiency is comparable to that of stochastic methods \cite{gamba2018galerkin}. Despite the recent reduction of their computational cost, deterministic methods are not as flexible as stochastic methods for the modeling of the diverse range of collision, transport, and boundary surface phenomena, and particle gain and loss mechanisms that occur in experimental settings \cite{johannes1997direct,kushner2009hybrid}.

Accurate modeling of the low probability tails of the velocity distribution is also of interest to experimentalists. For example, reaction rates in plasmas are determined by the overlap between the electron velocity pdf and the electron-impact cross sections of the various species. Therefore, accurate calculation of the low probability tails of the electron velocity pdf is critical. If the plasma is in thermal equilibrium, the electron velocity pdf can often be assumed to be Maxwellian. However, experimental results demonstrate that the Maxwellian assumption is often invalid \cite{allen1978applicability,dicarlo1989solving,sheridan1998electron,sozzi2008measurements,tan1973langmuir}, especially for pulsed plasmas where the velocity pdf may depend strongly on both spatial position and on time \cite{poulose2017driving}. Consequently, there is still a pressing need for improved stochastic particle methods that have both greater computational efficiency and higher accuracy, especially in the higher order moments and in the low probability tails of the distributions.

 Unlike deterministic methods, particle methods are not based on solving the Boltzmann equation directly. Rather they simulate a real system using stochastic particles, each of which represents a group of physical particles that are in close proximity in phase space. Collisions between stochastic particles are designed so as to approximate the collision processes modeled by the Boltzmann equation. Modern particle methods are based on the Direct Simulation Monte Carlo (DSMC) method which was developed by Bird \cite{bird1994molecular}. A convergence proof for this method was given by  Wagner \cite{wagner1992convergence}. The DSMC method has many computational advantages over deterministic methods. However, the computational cost of accurately computing the low probability tails is still very high. 
To resolve the low probability tails
with relatively low computational cost, Rjasanow and Wagner introduced a generalization of the DSMC method which they called the Stochastic Weighted Particle Method (SWPM). 
  
One of the challenges for the SWPM is that the number of stochastic particles gradually increases over the course of the simulation. To reduce the computational cost, Rjasanow and Wagner proposed to use a particle reduction scheme in combination with a clustering technique \cite{rjasanow1998reduction,rjasanow2005stochastic}. With these methods, the particles are partitioned into groups such that the particles are close together, and each group is replaced by a small number of particles. A reduction scheme that does not require clustering was proposed by Vikhansky and Kraft~\cite{vikhansky2005conservative}. 
Their reduction scheme redistributes the statistical weights of the particles 
so as to conserve the mass, momentum and energy of the ensemble. 
They argue that the efficiency of  a particle reduction scheme that relies on clustering 
primarily depends on the computational cost of the clustering algorithm. 
For the clustering algorithm used for the results in this paper, the 
computational cost scales linearly with the number of computational particles. 
In this context, it is important to note that 
the convergence theorem for the SWPM obtained  by 
Rjasanow and Wagner~\cite{rjasanow2005stochastic}
requires that the maximum diameter of the  groups of particles  
converges to zero as the initial number of computational particles increases. 
As a result, there is a theoretical advantage to employing a clustering technique.

The reduction schemes proposed by Rjasanow and Wagner were designed to preserve a specified set of moments of the distribution. The particle reduction scheme of Rjasanow and Wagner that preserves the most moments is a deterministic reduction scheme that preserves the total weight, momentum, energy and central heat flux within each group \cite{rjasanow1998reduction,rjasanow2005stochastic}. The total weight corresponds to the fraction of physical particles represented by the group. Although this reduction scheme preserves the central heat flux of each group, it does not preserve the raw heat flux, and consequently neither the raw nor the central heat flux are preserved for the entire system. 

In this paper, we improve upon the reduction scheme of Rjasanow and Wagner by conserving all of the moments up to the second order (i.e. the full pressure and momentum flux tensors), as well as both the raw and central heat flux, which are third order moments. Conservation of all these moments within each group automatically guarantees that they are conserved for the entire system. 

We performed two series of simulation studies to evaluate the degree to which our new deterministic particle reduction scheme improves upon that of Rjasanow and Wagner's deterministic reduction schemes. First, 
we present results which confirm
that the existing scheme of Rjasanow and Wagner does not conserve the raw heat flux within each group, while our new scheme conserves both raw and central heat flux of each group. Second, we study the convergence rate of the SWPM with the new and existing reduction schemes. In particular, we will present results showing the rate at which the scalar fourth order moment converges to its true value as the number of stochastic particles increases. We compare the results of our new reduction scheme with the existing deterministic schemes to show that our scheme requires a fewer initial number of computational particles and less computational time for the convergence of the scalar fourth order moment compared to the existing reduction schemes. 

In \cref{sec:SWPM}, we review the stochastic weighted particle method, and in \cref{sec:reduction method}, we discuss the reduction schemes of Rjasanow and Wagner and introduce our new reduction scheme. In \cref{sec:convergence}, we briefly show that the assumptions in Wagner's convergence theorem hold for our new particle reduction scheme. In \cref{sec:results}, we present  our numerical results, and finally in \cref{sec:conclusions} we make some conclusions.

\section{The stochastic weighted particle method}
\label{sec:SWPM}
In this section, we review the stochastic weighted particle method for the spatially homogeneous Boltzmann equation. The stochastic weighted particle method is a particle method \cite{rjasanow1996stochastic} that improves upon Bird's DSMC method \cite{bird1994molecular} by decreasing the uncertainty 
in the computation of rare events.
In Bird's method, each stochastic particle represents the same number of physical particles, and the  number of stochastic particles is kept constant throughout the simulation. With the SWPM, the number of physical particles represented by a single stochastic particle varies over the course of the simulation. Each stochastic particle represents a group of physical particles that are in close proximity in phase space. Each stochastic particle is characterized by its velocity and weight. The weight quantifies the proportion of physical particles represented by the given stochastic particle. The SWPM is based on a generalized version of the collision process used in the DSMC method in which only the physical particles corresponding to some portion of the weights of the colliding stochastic particles undergo collisions. For each stochastic collision, this results in the creation of two new stochastic particles whose velocities are given by the post-collision velocities and whose weights quantify the proportion of physical particles involved in the collision process   \cite{rjasanow1998reduction,rjasanow2005stochastic}. The weights of the original pair of colliding stochastic particles are reduced so as to keep the total weight of the system constant. As the number of stochastic particles increases due to collisions, the number of high velocity particles in the low probability tails of the velocity pdf also increases. By periodically applying a clustering technique and a particle reduction scheme, the proportion of particles in the center of the distribution is reduced. The combined effect of these processes is to increase the fraction of stochastic particles occupying the low probability tails of the velocity pdf, which decreases the statistical uncertainty in the tails. 

	We consider the spatially homogeneous Boltzmann equation for a single species of particles with unit mass. This equation, which describes the evolution of the velocity probability density function (pdf), $f$, due to collisions, is given by
\begin{equation} \label{spatially homogeneous}
\frac{\partial f}{\partial t}(\mathbf{v},t)\;=\;\displaystyle\int_{{\mathbb{R}}^3}\,\int_{{S}^2}B(\mathbf{v},\mathbf{w},\mathbf{\Theta})\,\big[(f(\mathbf{v'},t)\,f(\mathbf{w'},t)\,-\,f(\mathbf{v},t)\,f(\mathbf{w},t)\big]\,d\mathbf{\Theta} \, d\mathbf{w},
\end{equation}
with an initial condition of the form
\begin{equation}
f_{0}(\mathbf{v})\;=\;f(\mathbf{v},0).
\end{equation}
Here, 
$S^2$ denotes the unit sphere,
 $B$ is the collision kernel, $t$ is time, $\mathbf{v}$ and $\mathbf{w}$  are the 
 pre-collision velocities, and $\mathbf{v}'$ and $\mathbf{w}'$ are the post-collision velocities.
  For simplicity, for the results in this paper we consider isotropic Maxwell type interactions, for which 
\begin{equation}\label{eq: collision kernel}
B(\mathbf{v},\mathbf{w},\mathbf{\Theta})\;=\;\frac{1}{4\pi}.
\end{equation}
Assuming that the collisions are elastic, the post-collision velocities are given in terms of the pre-collision velocities and the direction vector, $\mathbf{\Theta}$, by
\begin{equation} \label{eq: post coll}
\mathbf{v}'\;=\;\frac{1}{2}\,\big[\mathbf{v}+\mathbf{w}-\mathbf{\Theta}\,|\mathbf{w}-\mathbf{v}|\,\big] \quad \text{and} \quad
\mathbf{w}'\;=\;\frac{1}{2}\,\big[\mathbf{v}+\mathbf{w}+\mathbf{\Theta}\,|\mathbf{w}-\mathbf{v}|\,\big]. 
\end{equation}

The state of the $i$th stochastic particle is given by $(\mathbf{v}_i,g_i )$, where $\mathbf{v}_i$ and $g_i$ are the velocity and weight, respectively. The state of the entire stochastic system is 
\begin{equation}
z\;=\;\{(g_1,\mathbf{v}_1),(g_2,\mathbf{v}_2),\ldots,(g_m,\mathbf{v}_m)\},
\end{equation}
where $m$ is the current number of stochastic particles. To  model a collision between the stochastic particles indexed by $i$ and $j$, we introduce the weight transfer function, $\gamma_{\text{coll}}(z;i,j)$. This function encodes the proportion of physical particles represented by the stochastic particles indexed by $i$ and $j$ that undergo collisions when the state of the system is $z$. The weight transfer function cannot exceed the minimum of the weights of the colliding particles,
\begin{equation}\label{eq: weight transfer function}
0\;\leq \; \gamma_{\text{coll}}(z;i,j)\;\leq \; \text{min}(g_i, g_j).
\end{equation}

During a collision between the stochastic particles indexed by $i$ and $j$, only the fraction of physical particles in the system represented by the weight $\gamma_{\text{coll}}(z;i,j)$ undergo collisions. This process is modeled by adding one or two new stochastic particles to the system. For the results in this paper, we use $\gamma_{\text{coll}}(z;i,j)=\frac{1}{2}\,\text{min}\,(g_i,g_j)$, which always results in two new stochastic particles. In this case, the state, $[J_{\text{coll}}(z;i,j,\mathbf{\Theta})]_k$, of the $k$-th stochastic particle after a collision between particles $i$ and $j$ is given by \cite{rjasanow1998reduction,rjasanow2005stochastic},
\begin{equation} \label{eq: weight transfer process for collision}
[J_{\text{coll}}(z;i,j,\mathbf{\Theta})]_k\;=\;\left\{\begin{aligned}
&(\mathbf{v}_k,g_k),   &&\text{if} \ k \leq m, \ k \notin\{i, j\}, \\
&(\mathbf{v}_i,g_i-\gamma_{\text{coll}}(z;i,j)), &&\text{if} \ k\,=\,i, \\
&(\mathbf{v}_j,g_j-\gamma_{\text{coll}}(z;i,j)), &&\text{if} \ k\,=\,j, \\
&(\mathbf{v}'_i,\gamma_{\text{coll}}(z;i,j)),  &&\text{if} \ k\,=\,m+1, \\
&(\mathbf{v}'_j,\gamma_{\text{coll}}(z;i,j)), &&\text{if} \ k\,=\,m+2, \\
\end{aligned}
\right.
\end{equation}
resulting in a new system state,
\begin{equation}
z\;=\;\{(g_1,\mathbf{v}_1),(g_2,\mathbf{v}_2),\ldots, (g_{m+1},\mathbf{v}_{m+1}),(g_{m+2},\mathbf{v}_{m+2})\}.
\end{equation}

After the collision, the fraction of physical particles corresponding to the weight $\gamma_{\text{coll}}(z;i,j)$ are assigned the post-collision velocities, and the remaining fraction of particles is unchanged. The two new stochastic particles are indexed by $m+1$ and $m+2$. To keep the total weight constant this weight is subtracted from the weights of the colliding stochastic particles, indexed by $i$ and $j$. For elastic collisions given by \cref{eq: post coll}, this stochastic collision process conserves the total weight, momentum and energy. 

To correctly model the evolution of the velocity pdf, we must relate the collision frequency for the stochastic system to that of the physical system. 
The total collision frequency in the physical system is given by 
\begin{equation} \label{eq: coll freq}
\nu\;=\;\displaystyle\int_{{\mathbb{R}}^3}\,\int_{{\mathbb{R}}^3}\,\int_{{S}^2}B(\mathbf{v},\mathbf{w},\mathbf{\Theta})\,f(\mathbf{v},t)\,f(\mathbf{w},t)\,d\mathbf{\Theta} \, d\mathbf{w}\,d\mathbf{v}.
\end{equation}
If we let $\nu_{g_ig_j}$ denote the frequency of collisions between the physical particles that correspond to stochastic particles with states $(\mathbf{v}_i, g_i)$ and $(\mathbf{v}_j, g_j)$, then by \cref{eq: coll freq} we obtain
\begin{equation}\label{eq: act coll freq}
\nu_{g_i g_j}\;=\;g_i\,g_j\,\displaystyle \int_{{S}^2} B(\mathbf{v}_i,\mathbf{v}_j,\mathbf{\Theta})\,d\mathbf{\Theta}.
\end{equation}
Furthermore, if we let $\widetilde\nu_{ij}$ denote the frequency of collisions between particles $i$ and $j$ in the stochastic system, then by the definition of the weight transfer function, we have that
\begin{equation}\label{eq: coll freq ratio}
\widetilde\nu_{ij} \, \gamma_\text{coll}(z;i,j)\;=\;\nu_{g_i g_j}.
\end{equation}
Therefore by \cref{eq: act coll freq}, we obtain
\begin{equation}\label{eq: stoch coll freq}
\widetilde\nu_{ij}\;=\;\frac{g_i\,g_j}{ \gamma_\text{coll}(z;i,j)}\,\displaystyle \int_{{S}^2} B(\mathbf{v}_i,\mathbf{v}_j,\mathbf{\Theta})\,d\mathbf{\Theta},
\end{equation}
and so, by \cref{eq: coll freq,eq: stoch coll freq}, the total collision frequency in the stochastic system is given by
\begin{equation}\label{eq: total collision frequency}
\widetilde\nu(z)\;=\;\frac{1}{2}\, \displaystyle \sum_{i=1}^m\,\sum_{\substack{j=1 \\ j \neq i}}^m \frac{g_i \  g_j }{\gamma_{\text{coll}}(z;i,j)}  \displaystyle \int_{\mathbf{\Theta} \in \mathbf{S^2}} B(\mathbf{v}_i,\mathbf{v}_j,\mathbf{\Theta})\, d\mathbf{\Theta}.
\end{equation}

 Using this frequency, we can obtain the waiting time between stochastic collisions. Since it is memoryless, this waiting time is a Poisson process which follows an exponential distribution. Therefore, the probability that a collision did not occur by time, $t$, is given by the survival function, $P(s>t)\,=\,e^{-\widetilde{\nu}(z)t}$.
Since the survival function has an uniform distribution on $[0,1]$, the time between collisions is given by $\label{eq: time}
\Delta t\,=\,-{\ln(r)}/{\widetilde{\nu}(z)}$, where $r$ is a random number uniformly distributed on $[0,1]$. Once the time interval between collisions is calculated, the time counter is updated.

The probability, $p(z;i,j)$, of a collision between the stochastic particles $i$ and $j$ is given by the ratio of the frequency $\widetilde\nu_{ij}$ of collisions between stochastic particles $i$ and $j$  given in \cref{eq: stoch coll freq} and the total collision frequency given in \cref{eq: total collision frequency}, that is, 
\begin{equation} \label{eq: pair probability}
p(z;k,l)\;=\;\frac{ \frac{g_k \  g_l }{\gamma_{\text{coll}}(z;k,l)}  \displaystyle \int_{\mathbf{\Theta} \in \mathbf{S^2}} B(\mathbf{v}_k,\mathbf{v}_l,\mathbf{\Theta})\, d\mathbf{\Theta}}{\displaystyle \sum_{i=1}^m\,\sum_{\substack{j=1 \\ j \neq i}}^m \frac{g_i \  g_j }{\gamma_{\text{coll}}(z;i,j)}  \displaystyle \int_{\mathbf{\Theta} \in \mathbf{S^2}} B(\mathbf{v}_i,\mathbf{v}_j,\mathbf{\Theta})\, d\mathbf{\Theta}}.
\end{equation}
Once a pair of colliding stochastic particles, $k$ and $l$, has been randomly selected,
the direction vector $\mathbf{\Theta}$ is chosen using the probability
density function
\begin{equation}
\eta(\mathbf{\Theta})=\frac{B(\mathbf{v}_k,\mathbf{v}_l,\mathbf{\Theta})}{\displaystyle \int_{\widetilde{\mathbf{\Theta}} \in {S^2}}B(\mathbf{v}_k,\mathbf{v}_l,\widetilde{\mathbf{\Theta}})\, \ d\widetilde{\mathbf{\Theta}}}.
\end{equation}
In the case of the constant collision kernel given by  \cref{eq: collision kernel}, the probability in \cref{eq: pair probability} further simplifies to 
 \begin{equation} \label{eq: simplified pair probability}
p(z;k,l)\;=\;\frac{ \frac{g_k \  g_l }{\gamma_{\text{coll}}(z;k,l)}}{\displaystyle \sum_{i=1}^m\,\sum_{\substack{j=1 \\ j \neq i}}^m \frac{g_i \  g_j }{\gamma_{\text{coll}}(z;i,j)}},
\end{equation}
and $\eta(\mathbf{\Theta})=\frac{1}{4\,\pi}$.
After the colliding pair of stochastic particles and the direction vector 
have been chosen, the velocities and weights of the colliding particles are updated
using \eqref{eq: weight transfer process for collision}.

The computation of the collision frequency in \cref{eq: total collision frequency} and collision probability in \cref{eq: pair probability} can be computationally expensive since in general these quantities need to be updated after each collision. This issue also arises for the DSMC method when the collision kernel is not constant. To overcome this computational issue, the technique of null collisions was developed by Koura \cite{koura1986null} for the DSMC method. With this technique, an equal maximum collision frequency is assigned to all pairs of particles, which leads to an equal probability of collision for all pairs. Consequently the colliding pair can be selected at random from a uniform distribution.  Once a pair is chosen, we decide whether the collision is an actual one or a null collision based on the probability given by the ratio between the actual collision frequency and the assigned equal collision frequency. Rjasanow and Wagner generalized the technique of the null collisions to the SWPM \cite{rjasanow1998reduction,rjasanow2005stochastic}.

\section{A reduction scheme conserving total weight, momentum, pressure tensor and heat flux}
\label{sec:reduction method}

As we explained in \cref{sec:intro}, one of the challenges for the SWPM is that the number of stochastic particles gradually increases. For computational feasibility, it is necessary to periodically reduce the number of particles. There are two steps in the reduction process. First, the stochastic particles need to be clustered into groups, and then each group of particles needs to be replaced by a small number of particles.

 A number of clustering techniques have been proposed by Rjasanow, Wagner and their collaborators \cite{matheis2003convergence,rjasanow1998reduction,rjasanow2005stochastic}. One of these techniques is based on partitioning particles into two groups with a cutting plane whose normal vector is in the direction of the eigenvector corresponding to the largest eigenvalue of the covariance matrix of the particles \cite{rjasanow1998reduction,rjasanow2005stochastic}. This partitioning method is performed iteratively on each of the partitioned groups using the group's covariance matrix. The iteration continues until the product of the total weight and the standard deviation of the particle speeds within each group is minimized, which results in a roughly uniform number of stochastic particles in each group. We use this clustering method for the results in this paper. Rjasanow and Wagner also proposed several stochastic and deterministic particle reduction schemes to replace each group by a group with a small number of particles. These schemes are based on conserving a specific set of moments of the distribution within each group. The details of these reduction schemes can be found in \cite{rjasanow1998reduction,rjasanow2005stochastic}.
  
We are interested in deterministic reduction schemes that conserve as many moments as possible, so that the structure of the velocity pdf is preserved.  
In this paper, we propose a particle reduction scheme that conserves all the moments of the velocity pdf  up to second order, given in \cref{tab: Moments}, together with the raw and central heat flux, which are the most physically relevant third order moments. The raw heat flux vector, $\mathbf{h}$, is computed relative to the origin, while the central heat flux, $\mathbf{q}$, is relative to the drift velocity, $\mathbf{V}$. They are given by
 \begin{equation}\label{eq: Total raw and central heat flux}
\mathbf{h}\;=\;\frac{1}{2}\, \displaystyle \sum_{i=1}^{m}\,g_i\,\mathbf{v}_i\,|\mathbf{v}_i|^2 \quad \text{and} \quad \mathbf{q}\;=\;\frac{1}{2}\, \displaystyle \sum_{i=1}^{m}\,g_i\,(\mathbf{v}_i-\mathbf{V})\,|\mathbf{v}_i-\mathbf{V}|^2.
\end{equation}
In the following discussion, when we refer to third order moments we simply mean the raw and central heat flux.
   \begin{table}[!htbp]
    {\footnotesize
    \caption{Moments of the velocity pdf}
    \label{tab: Moments}
    \begin{center}
    \begin{tabular}{|c|c|c|c|c|} 
     \hline
     Moment order & Raw Moment& Symbol & Central Moment &Symbol  \\
        \hline     
        Zero & Total Weight & $\varrho$ & --- & ---\\  
       First  & Momentum & $\varrho\,\mathbf{V}$ &--- &---\\
       Second & Momentum Flux Tensor & $\Pi$ &Pressure Tensor & $P$\\
       \hline
    \end{tabular}
  \end{center}
  }
\end{table}    

The reduction scheme of Rjasanow and Wagner that preserves the most moments and is closest to our scheme is the one that preserves the total weight, momentum, energy, and central heat flux \cite{rjasanow1998reduction,rjasanow2005stochastic}. With this scheme, although the central heat flux is conserved within each group, the momentum flux tensor and the pressure tensor are not. Only the total energy, which is the trace of the momentum flux tensor, is conserved. As a consequence, the raw heat flux for each group is not conserved, and therefore, the raw and central heat flux of the entire system are also not conserved. 
 
To conserve both the raw and central moments of a group, it is necessary and sufficient to conserve either of these moments and all of the lower order moments. Because of the additive property of raw moments, if a raw moment is conserved within each group then it must also be conserved for the entire system. In particular, since the total weight and momentum are raw moments, conservation of $\varrho$ and $\varrho\,\mathbf{V}$ within each group ensures that these two moments are conserved for the entire system. Therefore, if we could conserve the total weight, momentum, pressure tensor and central heat flux within each group, then all of the raw and central moments up to the second order together with the raw and central heat flux would be  conserved for the entire system. 

We formalize this idea as follows. Let $m$ be the number of stochastic particles in the system,  and suppose that the particles have been partitioned into $\widehat{n}$ groups with $m_l$ stochastic particles in the $l$-th group, $G_l$. Let $g_{l,i}$ and $\mathbf{v}_{l,i}$ denote the weight and velocity of the $i$-th particle in the $l$-th group. Then, the total weight, $\varrho_l$, momentum, $\varrho_l\,\mathbf{V}_l$, momentum flux tensor, $\Pi_l$, and raw heat flux, $\mathbf{h}_l$, for the $l$-th group are given by
\begin{equation}\label{eq: Raw moments}
\begin{aligned}
\varrho_l &\;=\;\displaystyle \sum_{i=1}^{m_l} g_{l,i}, & \varrho_l\,\mathbf{V}_l&\;=\;\displaystyle \sum_{i=1}^{m_l} g_{l,i}\,\mathbf{v}_{l,i},   \\ \Pi_l&\;=\;\displaystyle \sum_{i=1}^{m_l}\,g_{l,i} \,\mathbf{v}_{l,i}\,\mathbf{v}_{l,i}^T, & \,\mathbf{h}_l&\;=\; \frac{1}{2}\,\displaystyle \sum_{i=1}^{m_l}\,g_{l,i}\,\mathbf{v}_{l,i}\,|\mathbf{v}_{l,i}|^2,
\end{aligned}
\end{equation}
where $\mathbf{V}_l$ is the drift velocity of the $l$-th group. 
The pressure tensor, $P_l$,  and the central heat flux, $\mathbf{q}_l$, of the $l$-th group are given by
\begin{equation} \label{eq: Pressure Tensor}
P_l\;=\;\displaystyle \sum_{i=1}^{m_l} g_{l,i}\,(\mathbf{v}_{l,i}-\mathbf{V}_l)\,(\mathbf{v}_{l,i}-\mathbf{V}_l)^T \quad \text{and} \quad \,\mathbf{q}_l\;=\;\frac{1}{2}\, \displaystyle \sum_{i=1}^{m_l}\, g_{l,i}\,\big(\mathbf{v}_{l,i}-\mathbf{V}_l \big)\,\big|\mathbf{v}_{l,i}-\mathbf{V}_l \big|^2.
\end{equation}
The energy, $E_l$, and temperature, $T_l$, are given by
\begin{equation} \label{eq: energy and temperature}
E_l\;=\;\displaystyle \sum_{i=1}^{m_l} g_{l,i}\,|\mathbf{v}_{l,i}|^2, \quad \text{and} \quad 3\,\varrho_l\,T_l\;=\;\displaystyle \sum_{i=1}^{m_l} g_{l,i}\,|\mathbf{v}_{l,i}-\mathbf{V}_l|^2,
\end{equation}
where the quantity on the right hand side  of the formula for $T_l$ is the trace of the pressure tensor. The energy is given in terms of the temperature by
\begin{equation}
E_l\;=\;\varrho_l\,|\mathbf{V}_l|^2+3\,\varrho_l\,T_l.
\end{equation}
 The raw moments of the entire system are given by
 \begin{equation} \label{eq: Raw moments system}
 \begin{aligned}
 \varrho\;=\;\displaystyle \sum_{l=1}^{\hat{n}} \varrho_l, \quad \varrho\,\mathbf{V}\;=\;\displaystyle \sum_{l=1}^{\hat{n}}\varrho_l\,\mathbf{V}_l,   \quad \Pi&\;=\;\displaystyle \sum_{l=1}^{\hat{n}} \Pi_l, \quad \text{and} \quad \mathbf{h}\;=\;\displaystyle \sum_{l=1}^{\hat{n}} \mathbf{h}_l.
 \end{aligned}
 \end{equation}
Here $\varrho$ is the total weight, $\mathbf{V}$ is the drift velocity, $\Pi$ is the momentum flux tensor and $\mathbf{h}$ is the raw heat flux of the entire system. 

The relationship between the raw and central second order moments is given by 
 \begin{equation} \label{eq: raw and central second order moment}
 P_l\;=\;\Pi_l- \varrho_l\,\mathbf{V}_l\,\mathbf{V}_l^T.
 \end{equation}
Using this relationship, we observe that for a reduction scheme to preserve both of the second order moments, $P_l$ and $\Pi_l$, it is sufficient to conserve the total weight, $\varrho_l$, the momentum, $\varrho_l\,\mathbf{V}_l$, and either $P_l$ or $\Pi_l$. Since the raw moments are additive (see \cref{eq: Raw moments system}), conservation of the total weight, momentum and momentum flux tensor within each group leads to the conservation of the these moments for the entire system. Using \cref{eq: raw and central second order moment} for the entire system, we conclude that the pressure tensor for the entire system is also conserved. Therefore, a reduction scheme that conserves the total weight, momentum and either of the second order moments for each group leads to the conservation of the moments up to second order for the entire system. 

Similarly, the relationship between the raw and central third order moment can be determined using \cref{eq: Raw moments,eq: energy and temperature,eq: raw and central second order moment} giving the equation,
\begin{equation}\label{eq: raw and central third order moment}
\begin{aligned}
\mathbf{q}_l\;=\;& \mathbf{h}_l-P_l\,\mathbf{V}_l-\frac{1}{2}\,\varrho_l\,\mathbf{V}_l\,\big|\mathbf{V}_l \big|^2-\frac{3}{2}\,\varrho_l\,T_l\,\mathbf{V}_l, 
\end{aligned}
\end{equation}
which relates the raw and central heat flux to each other via the lower order moments. As above, to conserve both the raw and central moments of a group up to third order it is sufficient to conserve the total weight, momentum, pressure tensor, and either of the third order moments of the group. Similarly, using the additivity property of the raw moments, and the relationships between the moments given by \cref{eq: raw and central second order moment} and  \cref{eq: raw and central third order moment} for the entire system, the pressure tensor and central heat flux of the system are also conserved together with all the raw moments. This verifies our claim that to conserve the raw and central moments of the system up to the third order during a reduction process, it is sufficient to conserve the total weight, momentum, pressure tensor and central heat flux of each group.

 Next, we present a novel particle reduction scheme that conserves the total weight, momentum, pressure tensor, and central heat flux in a group.
First, we outline the idea behind the conservation of these moments. Before describing this scheme, we briefly recall that for the reduction scheme that preserves the total weight and momentum of a group, we simply replace all the stochastic particles in the group by a single stochastic particle with the given weight and momentum \cite{rjasanow2005stochastic}. The next higher order moments are the momentum flux tensor and the pressure tensor. Since, the pressure tensor is a $3 \times 3$ real symmetric positive semi-definite matrix, it can be diagonalized using an orthonormal basis of normalized eigenvectors, with the non-negative eigenvalues as its diagonal entries. This simplifies the problem, as we only have to conserve the diagonal entries of the pressure tensor. To conserve the pressure tensor in this new orthonormal basis, each group can be replaced by a group with between one and three pairs of particles. The number of pairs of particles depends on the number of nonzero eigenvalues. Specifically, we choose to assign an equal portion of the total weight to each pair of particles. For each pair, the velocity of one of the particles relative to the drift velocity is chosen to be in the direction of an eigenvector with nonzero eigenvalue, while the other particle moves in the opposite direction. The magnitudes of these velocity pairs relative to the drift velocity of the group are equal, and  are chosen to ensure the conservation of each of the diagonal entries of the pressure tensor in the new basis, which leads to the conservation of the pressure tensor. The total weight and momentum of the group are conserved as a consequence of the choices we made.

To additionally conserve the raw and central heat flux, we utilize a degree of freedom in the choice of weights and in the magnitudes of the velocities relative to the drift velocity. We choose the sum of the weights of the particles for each pair to be an equal portion of the total weight. For each pair, the weights of the two particles and the magnitudes of their velocities relative to the drift velocity are not required to be equal. These quantities are determined by solving the conditions required to conserve the weight, momentum, pressure tensor, and central heat flux in the new basis. Once the post reduction velocities are determined, the transformation of these velocities to the standard basis leads to the conservation of the moments in the standard basis. 
 
 The following theorem summarizes our new particle reduction scheme for the conservation of the total weight, momentum, pressure tensor, and central heat flux of a group.
 \begin{theorem} \label{T: Theorem1}
 Let $G_l$ be a group of stochastic particles. Suppose that the pressure tensor, $P_l$, has $k$ non-zero eigenvalues, $\lambda_1, \ldots, \lambda_k$, for some $k \in \{1,2,3\}$, and an associated orthonormal set of eigenvectors, $\mathbf{\Theta}_1, \ldots, \mathbf{\Theta}_k$, with the direction of $\mathbf{\Theta}_i$ chosen so that $\widehat{q}_{l,i}=\mathbf{\Theta}_i^T\,\mathbf{q}_l>0$. Let $\widetilde{G}_l$ be the reduced group of $2k$ stochastic particles whose weights and velocities, $(\widetilde{\mathbf{v}}_i,\widetilde{g}_i)$, for $i=1, \ldots, 2k$ are given by
\begin{equation} \label{eq: reduced particles}
\begin{aligned}
 \widetilde{\mathbf{v}}_i &\;=\;\mathbf{V}_l+\gamma_i \,\sqrt{\frac{k\,\lambda_i}{\varrho_l}}\mathbf{\Theta}_i, \quad &\widetilde{g}_i&\;=\;\frac{\varrho_l}{k}\,\frac{1}{1+\gamma_i^2}, \\
 \widetilde{\mathbf{v}}_{i+k}&\;=\;\mathbf{V}_l-\frac{1}{\gamma_i} \,\sqrt{\frac{k\,\lambda_i}{\varrho_l}}\mathbf{\Theta}_i, \quad &\widetilde{g}_{i+k}&\;=\;\frac{\varrho_l}{k}\,\frac{\gamma_i^2}{1+\gamma_i^2}, \quad \text{for} \quad i\;=\;1, \ldots, k
\end{aligned}
\end{equation}
where, 
\begin{equation} \label{eq: gamma parameter}
\gamma_i\;=\;\frac{\sqrt{\varrho_l}\,\widehat{q}_{l,i}}{\sqrt{k}\,\lambda_i^{\frac{3}{2}}} +\sqrt{1+\frac{\varrho_l\,\widehat{q}_{l,i}^2}{k\,\lambda_i^3}}, \quad \text{for} \quad  i\,=\,1, \ldots , k.
\end{equation}
Then, $\widetilde{G}_l$ preserves the total weight, momentum, pressure tensor, and central heat flux of $G_l$, which leads to the preservation of all the moments up to the second order as well as the raw and central heat flux of $G_l$.

 \end{theorem}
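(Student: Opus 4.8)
The plan is to diagonalise the problem. Since $P_l$ is real, symmetric and positive semi-definite, I would work in the orthonormal eigenbasis of $P_l$; the four conservation conditions then decouple into $k$ independent two-particle problems, one per nonzero eigenvalue. The first point to establish is that the central heat flux already lies in $\mathrm{span}\{\mathbf{\Theta}_1,\dots,\mathbf{\Theta}_k\}$: if $\mathbf{\Theta}$ is a unit eigenvector of $P_l$ with eigenvalue $0$, then from \eqref{eq: Pressure Tensor} we get $0=\mathbf{\Theta}^T P_l\,\mathbf{\Theta}=\sum_{i} g_{l,i}\,\big(\mathbf{\Theta}^T(\mathbf{v}_{l,i}-\mathbf{V}_l)\big)^2$, and since every $g_{l,i}\ge 0$ this forces $\mathbf{\Theta}^T(\mathbf{v}_{l,i}-\mathbf{V}_l)=0$ whenever $g_{l,i}>0$, whence $\mathbf{\Theta}^T\mathbf{q}_l=0$. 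Therefore $\mathbf{q}_l=\sum_{i=1}^{k}\widehat{q}_{l,i}\,\mathbf{\Theta}_i$, so it suffices to match, for each $i$, the scalar components along $\mathbf{\Theta}_i$ of the weight, momentum, pressure tensor and central heat flux.

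Next I would run the four moment checks pair by pair, writing $\widetilde{\mathbf{v}}_i-\mathbf{V}_l=\gamma_i c_i\,\mathbf{\Theta}_i$ and $\widetilde{\mathbf{v}}_{i+k}-\mathbf{V}_l=-\gamma_i^{-1}c_i\,\mathbf{\Theta}_i$ with $c_i=\sqrt{k\lambda_i/\varrho_l}$. (i) The two weights in \eqref{eq: reduced particles} sum to $\varrho_l/k$, so $\sum_{i=1}^{2k}\widetilde{g}_i=\varrho_l$. (ii) The momentum of pair $i$ relative to the drift is proportional to $\gamma_i\widetilde{g}_i-\gamma_i^{-1}\widetilde{g}_{i+k}$, which vanishes because $\widetilde{g}_i\propto(1+\gamma_i^2)^{-1}$ and $\widetilde{g}_{i+k}\propto\gamma_i^2(1+\gamma_i^2)^{-1}$; hence $\sum\widetilde{g}_i\widetilde{\mathbf{v}}_i=\varrho_l\mathbf{V}_l$. (iii) The momentum flux tensor is a sum over individual particles with no cross terms between distinct pairs, so the contribution of pair $i$ is $\tfrac{\varrho_l}{k}c_i^2\,\mathbf{\Theta}_i\mathbf{\Theta}_i^T$ once the $(1+\gamma_i^2)$ factors cancel; since $\tfrac{\varrho_l}{k}c_i^2=\lambda_i$, summing recovers the spectral decomposition $\sum_i\lambda_i\mathbf{\Theta}_i\mathbf{\Theta}_i^T=P_l$.

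The central heat flux is the step demanding the most care, and the one I expect to be the main obstacle. A short computation shows the contribution of pair $i$ is a vector along $\mathbf{\Theta}_i$ whose coefficient, after the simplification $\dfrac{\gamma_i^3-\gamma_i^{-1}}{1+\gamma_i^2}=\gamma_i-\gamma_i^{-1}$, equals $\tfrac12\,\tfrac{\varrho_l}{k}\,c_i^3\,(\gamma_i-\gamma_i^{-1})=\tfrac12\,k^{-1/2}\varrho_l^{-1/2}\lambda_i^{3/2}\,(\gamma_i-\gamma_i^{-1})$. Requiring this to equal $\widehat{q}_{l,i}$ is equivalent to the quadratic $\gamma_i^2-2\beta_i\gamma_i-1=0$ with $\beta_i=\sqrt{\varrho_l}\,\widehat{q}_{l,i}\,/\,(\sqrt{k}\,\lambda_i^{3/2})$, whose positive root is precisely the $\gamma_i$ defined in \eqref{eq: gamma parameter}. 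Since $\widehat{q}_{l,i}>0$, $\lambda_i>0$ and $\varrho_l>0$, we have $\gamma_i>0$, so all velocities and weights in \eqref{eq: reduced particles} are well defined and the weights are positive. Summing over $i$ gives $\sum_i\widehat{q}_{l,i}\mathbf{\Theta}_i=\mathbf{q}_l$.

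Finally I would invoke the identities already derived in this section. Conservation of $\varrho_l$, $\varrho_l\mathbf{V}_l$ and $P_l$ forces conservation of $\Pi_l$ through \eqref{eq: raw and central second order moment} and of $T_l$ (hence $E_l$) through \eqref{eq: energy and temperature}; combined with conservation of $\mathbf{q}_l$, the identity \eqref{eq: raw and central third order moment} then forces conservation of the raw heat flux $\mathbf{h}_l$. This yields the full conclusion that $\widetilde{G}_l$ preserves every moment of $G_l$ up to second order together with both the raw and central heat flux. The only genuinely nontrivial ingredients are the observation that $\mathbf{q}_l$ lies in the span of the nonzero-eigenvalue eigenvectors and the reverse-engineering of $\gamma_i$ from the heat-flux condition; everything else reduces to direct substitution.
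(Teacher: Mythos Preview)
Your proposal is correct and follows essentially the same approach as the paper: diagonalise $P_l$, decouple the conservation conditions into $k$ independent two-particle problems along the eigenvectors, and recover $\gamma_i$ as the positive root of the quadratic coming from the heat-flux condition. One minor slip: in the heat-flux step your intermediate simplification should read $\tfrac12\,k^{1/2}\varrho_l^{-1/2}\lambda_i^{3/2}(\gamma_i-\gamma_i^{-1})$ rather than $k^{-1/2}$ (your $\beta_i$ and the quadratic are nevertheless stated correctly); on the plus side, your argument that $\mathbf{q}_l\perp\ker P_l$ handles the degenerate case $k<3$ more rigorously than the paper, which merely asserts that it follows similarly.
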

 
\begin{proof}
 We consider the case where the eigenvalues of the pressure tensor are all nonzero. We let the reduced group, $\widetilde{G}_l$, consist of three pairs of particles, with each pair of the form,
\begin{equation} \label{eq: pairs assumption}
\begin{aligned}
&\widetilde{\mathbf{v}}_i\;=\;\mathbf{V}_l+\alpha_i\,\mathbf{\Theta}_i,  \quad \widetilde{\mathbf{v}}_{i+3}\;=\;\mathbf{V}_l-\alpha_{i+3}\,\mathbf{\Theta}_i, \quad  \text{and} \quad \widetilde{g}_i+\widetilde{g}_{i+3}\;=\;\frac{\varrho_l}{3}, \quad \! i \in \{1,2,3 \},
\end{aligned}
\end{equation}
for some $\alpha_i \in \mathbb{R}$ and $\mathbf{\Theta}_i \in S^2$. We derive the conditions on the unknown parameters,  $\widetilde{g}_i$, $\alpha_i$, and $\mathbf{\Theta}_i$, so as to conserve the total weight, momentum, pressure tensor and heat flux. 

By construction, the total weight is conserved, 
\begin{equation}\label{eq: zero moment}
\displaystyle \sum_{i=1}^{6}\,\widetilde{g}_i\;=\;\varrho_l.
\end{equation}
Similarly, if we impose the condition
\begin{equation} \label{eq: alpha}
\widetilde{g}_i\,\alpha_i\;=\;\widetilde{g}_{i+3}\,\alpha_{i+3}, \quad \text{for} \quad i\in \{1, 2, 3\},
\end{equation}
we find that the momentum of the group is conserved, since
\begin{equation} \label{eq: first moment}
\displaystyle \sum_{i=1}^{6}\,\widetilde{g}_i\,\widetilde{\mathbf{v}}_i\;= \;\sum_{i=1}^3\widetilde{g}_i(\mathbf{V}_l+\alpha_i\,\mathbf{\Theta}_i)+\widetilde{g}_{i+3}(\mathbf{V}_l-\alpha_{i+3}\,\mathbf{\Theta}_i)
=  \varrho_l\,\mathbf{V}_l.
\end{equation}

Next, to conserve the pressure tensor, $P_l$, we use the fact that it is a $3\times3$ real symmetric matrix with positive eigenvalues. Therefore, there is a diagonal matrix $D=\diag[\lambda_1, \lambda_2, \lambda_3]$ and an orthonormal matrix $Q=[\mathbf{\Theta_1},\mathbf{\Theta_2},\mathbf{\Theta_3}]$ such that
\begin{equation}
D\,=\,Q^T\,P_l\,Q.
\end{equation}
That is, each $\{\lambda_i,\mathbf{\Theta}_i \}$ is an eigenpair of the matrix $P_l$.
The condition, $\widetilde{P}_l\,=\,P_l$, that the reduction scheme preserves the pressure tensor is therefore equivalent to the condition 
\begin{equation}
\begin{aligned}
D\;=&\;Q^{T}\,\Big[\displaystyle \sum_{i=1}^6 \widetilde{g}_i\,\big(\widetilde{\mathbf{v}}_i -\mathbf{V}_l \big) \big(\widetilde{\mathbf{v}}_i-\mathbf{V}_l \big)^{T} \Big]\,Q \\
=&\;\displaystyle \sum_{i=1}^3\big(\widetilde{g}_i\,{\alpha_i}^2+\widetilde{g}_{i+3}\,\alpha_{i+3}^2 \big)\,\big(Q^T\,\mathbf{\Theta}_i\big)\,\big(Q^T\,\mathbf{\Theta}_i\big)^T.
\end{aligned}
\end{equation}
Therefore, to conserve the pressure tensor, we require that
\begin{equation} \label{eq: alpha square}
\widetilde{g}_i\,{\alpha_i}^2+\widetilde{g}_{i+3}\,\alpha_{i+3}^2\;=\;\lambda_i, \quad \text{for} \quad i \in \{1, 2, 3\}.
\end{equation}

In the basis of eigenvectors, the central heat flux, $\widehat{\mathbf{q}}_l$, is given by
\begin{equation}
\widehat{\mathbf{q}}_l\;=\;Q^T\,\mathbf{q}_l.
\end{equation}
As in the statement of the theorem, we choose the direction of $\mathbf{\Theta}_i$ so that the $i$-th component, $\widehat{q}_{l,i}\,=\,\mathbf{\Theta}_i^T\, \mathbf{q}_l$, of $\widehat{\mathbf{q}}_l$ is positive. To conserve the central heat flux in the new basis, we have
\begin{equation}
\widehat{\mathbf{q}}_l\;=\;\frac{1}{2} \displaystyle \sum_{i=1}^3 \big(\widetilde{g}_i\,\alpha_i^3-\widetilde{g}_{i+3}\,\alpha_{i+3}^3\big)\,Q^T \mathbf{\Theta}_i \;=\;\frac{1}{2} \displaystyle \sum_{i=1}^3 \big(\widetilde{g}_i\,\alpha_i^3-\widetilde{g}_{i+3}\,\alpha_{i+3}^3\big)\,\mathbf{e}_i,
\end{equation}
and we obtain
\begin{equation} \label{eq: alpha cube}
\widehat{q}_{l,i}\;=\;\frac{1}{2}\big[\widetilde{g}_i\,\alpha_i^3-\widetilde{g}_{i+3}\,\alpha_{i+3}^3\big].
\end{equation}

Next, to solve for $\alpha_i$ and $\widetilde{g}_i$, we apply a technique used by Rjasanow and Wagner \cite{rjasanow1998reduction, rjasanow2005stochastic} . We introduce a new parameter, $\gamma_i$, and express $\alpha_i$ as 
\begin{equation} \label{eq: alpha 1}
\alpha_i\;=\;\gamma_i\,\sqrt{\frac{3\,\lambda_i}{\varrho_l}}.
\end{equation}
Substituting \cref{eq: alpha} into \cref{eq: alpha square} we obtain 
\begin{equation}
\lambda_i\;=\;\frac{\widetilde{g}_i}{\widetilde{g}_{i+3}}\,\alpha_i^2\,\Big[\widetilde{g}_{i+3}+\widetilde{g}_i \Big]. 
\end{equation}
Substituting the expression for $\alpha_i$ given by \cref{eq: alpha 1}, and using \cref{eq: pairs assumption} for the sum of weights, we obtain
\begin{equation}
\frac{\widetilde{g}_i}{\widetilde{g}_{i+3}}\,\gamma_i^2\;=\;1.
\end{equation}
Using this relationship in \cref{eq: pairs assumption}, we obtain
\begin{equation}\label{eq: g14 and alpha14}
\begin{aligned}
\widetilde{g}_i\;=\;\frac{\varrho_l}{3}\,\frac{1}{1+\gamma_i^2}, \quad \alpha_i\;=\;\gamma_i\,\sqrt{\frac{3\,\lambda_i}{\varrho_l}}, \quad \text{and} \quad
\widetilde{g}_{i+3}\;=\;\frac{\varrho_l}{3}\,\frac{\gamma_i^2}{1+\gamma_i^2}, \quad \alpha_{i+3}\;=\;\frac{1}{\gamma_i}\,\sqrt{\frac{3\,\lambda_i}{\varrho_l}}.
\end{aligned}
\end{equation}

To determine $\gamma_i$, we substitute \cref{eq: g14 and alpha14} into \cref{eq: alpha cube} to obtain
\begin{equation}
\widetilde{g}_i\,\alpha_i^3-\widetilde{g}_{i+3}\,\alpha_{i+3}^3\;=\;\sqrt{\frac{3}{\varrho_l}}\,\frac{\lambda_i^{\frac{3}{2}}}{\gamma_i}\, \big(\gamma_i^2-1\big).
\end{equation}
Now by  \cref{eq: alpha cube}, $\widehat{q}_{l,i}\,=\,\frac{1}{2} \big[\widetilde{g}_i\,\alpha_i^3-\widetilde{g}_{i+3}\,\alpha_{i+3}^3 \big] >0$. Therefore $\gamma_i>1$ and
\begin{equation}
\widehat{q}_{l,i}\;=\;\frac{1}{2}\sqrt{\frac{3}{\varrho_l}}\,\frac{\lambda_i^{\frac{3}{2}}}{\gamma_i}\, \big(\gamma_i^2-1\big)\; \implies \;\gamma_i^2-2\,\frac{\sqrt{\varrho_l}\,\widetilde{q}_i}{\sqrt{3}\,\lambda_i^{\frac{3}{2}}}\,\gamma_i-1\;=\;0.
\end{equation}
Solving for the positive root, we obtain
\begin{equation} \label{eq: gamma}
\gamma_i\;=\;\frac{\sqrt{\varrho_l}\,\widehat{q}_{l,i}}{\sqrt{3}\,\lambda_i^{\frac{3}{2}}} +\sqrt{1+\frac{\varrho_l\,\widehat{q}_{l,i}^2}{3\,\lambda_i^3}}.
\end{equation}
Therefore, the post reduction particles are given by \cref{eq: reduced particles} and \cref{eq: gamma parameter} as required.

If the pressure tensor has at least one zero eigenvalue, the moments can be conserved with fewer than six particles. The reason is that there is no need to introduce particles whose heat flux is in the direction of the eigenvectors corresponding to the zero eigenvalues. In this situation, the result follows similarly to the calculations above.	
\end{proof}

\section{Theoretical convergence of SWPM with the new reduction scheme}
\label{sec:convergence}
In this section, we show that our new reduction scheme satisfies the assumptions in Wagner's convergence theorem for the SWPM \cite[Thm. (3.22)]{rjasanow2005stochastic}. This theorem provides a collection of assumptions which guarantee that the sequence of empirical measures of the Markov process produced by the SWPM converges to the weak solution of the Boltzmann equation as $n \to \infty$. These assumptions on the reduction scheme are given by \cite[eq. (3.162)]{rjasanow2005stochastic}, and \cite[eq. (3.164)]{rjasanow2005stochastic}.  
According to Rjasanow and Wagner, assumption \cite[eq. (3.162)]{rjasanow2005stochastic} assures that the reduction is sufficiently precise, and \cite[eq. (3.164)]{rjasanow2005stochastic} restricts the increase in energy during reduction. Since the energy is conserved in our new reduction scheme, the second assumption related to the energy is satisfied. For assumption \cite[eq. (3.162)]{rjasanow2005stochastic}, the arguments given by Rjasanaw and Wagner for their deterministic reduction schemes also apply to our new deterministic reduction scheme. Therefore, for this assumption to hold for our new reduction scheme, it is sufficient to show that the inequality given by \cite[eq. (3.273)]{rjasanow2005stochastic} holds. This inequality states that
\begin{equation}\label{eq:inequality}
\bigg|\displaystyle \int_{Z_l} \Phi(\widetilde{z}_l)\,p_{\text{red}}(z_l;d\widetilde{z}_l)-\Phi(z_l)\bigg|\;\leq \;{||\varphi||}_L\bigg[\sum_{i=1}^{m_l}g_i\,\big|\mathbf{V}_l-\mathbf{v}_i\big|+\varrho_l\,\sqrt{3\,T_l}\bigg].
\end{equation}
Here,
\begin{equation}
z_l\;=\;\{(g_1,\mathbf{v}_1),(g_2,\mathbf{v}_2),\ldots, (g_{m_l},\mathbf{v}_{m_l})\}
\end{equation}
is the state of a group $G_l$ prior to reduction, $\widetilde{z}_l$ is the post-reduction state, and $p_{\text{red}}(z_l;d\widetilde{z}_l)$ is a measure that gives the probability that the post-reduction state lie in the volume element, $d\widetilde{z}_l$. The function $\Phi$, which approximates the velocity pdf,  is given by
\begin{equation}
\Phi(z_l)\;= \;\sum_{i=1}^{m_l} g_i\,\varphi(\mathbf{v}_i),
\end{equation}
for the particles in the group $G_l$. Here $\varphi$ is an arbitrary test function. The norm for the test function, $||\varphi||_L$,  is defined as 
\begin{equation}\label{eq: test function norm}
 ||\varphi||_L\;=\;\max \bigg \{||\varphi||_\infty, \sup_{\mathbf{v}\,\neq\, \mathbf{w}\, \in \,\mathbb{R}^3} \frac{|\varphi(\mathbf{v})-\varphi(\mathbf{w})|}{|\mathbf{v}-\mathbf{w}|} \bigg\}.
\end{equation}
In the inequality \cref{eq:inequality}, $\displaystyle \int_{Z_l} \Phi(\widetilde{z}_l)\,p_{\text{red}}(z_l;d\widetilde{z}_l)$ gives the expectation of $\Phi$ for the reduced system. 

For our new deterministic reduction scheme, in the spatially homogeneous case, for each group only one state is possible after reduction. Therefore, in the case where all three eigenvalues of the pressure tensor are positive, $p_{\text{red}}(z_l;d\widetilde{z}_l)\,=\,\delta_{J_{\text{red}(z_l)}}(d\widetilde{z}_l)$, where ${[J_{\text{red}(z_l)}]}_i\,=\,(\widetilde{\mathbf{v}}_i(z_l),\widetilde{g}_i(z_l))$, for $i\,=\,1,\ldots,6$, is the post reduction state given by \cref{T: Theorem1}.
Therefore,
\begin{equation}
\displaystyle \int_{Z_l} \Phi(\widetilde{z}_l)\,p_{\text{red}}(z_l;d\widetilde{z}_l)\;=\;\Phi(J_{\text{red}}(\widetilde{z}_l)) 
\;=\; \sum_{j=1}^{6} \widetilde{g}_j\,\varphi(\widetilde{\mathbf{v}}_j).
\end{equation}
Since $\displaystyle \sum_{j=1}^6 \widetilde{g}_j\,=\,\varrho_l$, and applying the triangle inequality, we obtain
\begin{equation}
\begin{aligned}
\bigg|\displaystyle \int_{Z_l} \Phi(\widetilde{z}_l)\,p_{\text{red}}(z_l;d\widetilde{z}_l)-\Phi(z)\bigg|\,\,= & \,\, \bigg|\sum_{j=1}^{6} \widetilde{g}_j\,\varphi(\widetilde{\mathbf{v}}_j)- \sum_{i=1}^{m_l} g_i\,\varphi(\mathbf{v}_i)\bigg| \\
\leq&\,\,\sum_{j=1}^{6} \bigg| \frac{\widetilde{g}_j}{\varrho_l}\, \sum_{i=1}^{m_l} g_i\,\varphi(\widetilde{\mathbf{v}}_j)- \frac{\widetilde{g}_j}{\varrho_l} \sum_{i=1}^{m_l} g_i\,\varphi(\mathbf{v}_i)\bigg| \\
\leq&\,\,\sum_{j=1}^{6} \frac{\widetilde{g}_j}{\varrho_l}\, \sum_{i=1}^{m_l} g_i\,\bigg|\varphi(\widetilde{\mathbf{v}}_j)- \varphi(\mathbf{v}_i)\bigg| \\
\leq&\,\,{||\varphi||}_L\,\sum_{j=1}^{6} \frac{\widetilde{g}_j}{\varrho_l}\, \sum_{i=1}^{m_l} g_i\,\bigg|\widetilde{\mathbf{v}}_j- \mathbf{v}_i\bigg|,
\end{aligned}
\end{equation}
where the final inequality follows from \cref{eq: test function norm}.
Furthermore, using \cref{eq: pairs assumption}, the triangle inequality, and the fact that $\displaystyle \sum_{i=1}^{m_l} g_i\,=\,\varrho_l $, we obtain
\begin{equation}
\begin{aligned}
&\,\,\sum_{j=1}^{6}\frac{\widetilde{g}_j}{\varrho_l}\,\sum_{i=1}^{m_l} g_i\,\bigg|\widetilde{\mathbf{v}}_j- \mathbf{v}_i\bigg|\\
\leq& \,\,\sum_{i=1}^{m_l}g_i\,\big|\mathbf{V}_l-\mathbf{v}_i\big|+\sum_{j=1}^{6}\widetilde{g}_j\,\alpha_j  \\
=& \,\,\sum_{i=1}^{m_l}g_i\,\big|\mathbf{V}_l-\mathbf{v}_i\big|+\bigg(\sum_{j=1}^{6}\widetilde{g}_j^2\,\alpha_j^2+2\,\sum_{j=1}^6 \,\sum_{k>j} \widetilde{g}_j\,\widetilde{g}_k\,\alpha_j\,\alpha_k \bigg)^{\frac{1}{2}} \\
\leq& \,\,\sum_{i=1}^{m_l}g_i\,\big|\mathbf{V}_l-\mathbf{v}_i\big|+\bigg(\sum_{j=1}^{6}\widetilde{g}_j^2\,\alpha_j^2+\sum_{j=1}^6 \,\sum_{k>j} \widetilde{g}_j\,\widetilde{g}_k\,(\alpha_j^2+\alpha_k^2) \bigg)^{\frac{1}{2}}\\
 = &\,\,\sum_{i=1}^{m_l}g_i\,\big|\mathbf{V}_l-\mathbf{v}_i\big|+\bigg({\varrho}_l\,\sum_{j=1}^{6}\widetilde{g}_j\,\alpha_j^2\bigg)^{\frac{1}{2}} \\
= & \,\,\sum_{i=1}^{m_l}g_i\,\big|\mathbf{V}_l-\mathbf{v}_i\big|+\varrho_l\,\sqrt{3\,T_l}.
\end{aligned}
\end{equation}
 Here, the final equality is obtained from \cref{eq: energy and temperature} and \cref{eq: pairs assumption}. Therefore, the desired inequality \cref{eq:inequality} holds for our new reduction scheme, and Wagner's convergence theorem applies in this context.

\section{Numerical results}
\label{sec:results}
In this section, we discuss our numerical results. 
The algorithm was implemented in C++ and all simulations 
were performed on a desktop machine with a 3.6~GHz single processor.  
We verified that the total times for the particle collisions and
for the clustering and particle reductions both
scale linearly with the initial number of computational particles, $m_0$.
The time taken to simulate the  clustering and
particle reductions was  approximately four times larger 
than the time taken to simulate  the particle collisions. However, as we will show in \cref{table:timings}, for the results in \cref{ABseries,fig1: s,fig2: s} below, the
total computational time is only about 30 seconds for $N=100$ ensembles with 
$m_0 = 10,240$ particles per ensemble.

First, to numerically verify the conclusions of \cref{T: Theorem1}, we study the sum over all the groups of the reduction errors for the raw and central heat flux. For this study, we consider an initial Maxwellian distribution with temperature, $T=1$, and drift velocity, $\mathbf{V}=\langle 0,0,0 \rangle$. We used a single ensemble to obtain these results, and the initial number of 
computational particles
is chosen to be $m_0 =$ 10,240. Once the number of computational particles reaches $4m_0$, we reduce it to $\widetilde{m} \approx \frac{m_0}{4}$, which was the strategy that produced the largest errors for the deterministic reduction schemes of
Rjasanow and Wagner~\cite{rjasanow1998reduction}. We chose this strategy to demonstrate that our method performs well even under this condition.

 \begin{table}[!htbp]
    {\footnotesize
    \caption{Maximum average relative errors in the central and raw heat flux for the three reduction schemes.}
    \label{tab: table errors}
    \begin{center}
    \begin{tabular}{|c|L|L|} 
     \hline
      Reduction Scheme & Central Heat Flux Error & Raw Heat Flux Error \\
        \hline       
       Energy  &1 & 0.01743 \\
       Energy and Central Heat Flux (Ct. HF) & 3.00844e-15& 0.015072\\
       Pressure Tensor (PT) and Central Heat Flux (Ct. HF)  &2.12406e-15 &6.81888e-16 \\
       \hline
    \end{tabular}
  \end{center}
  }
\end{table}

 In \cref{tab: table errors}, we compare three reduction schemes. All three schemes conserve the total weight and momentum, and in addition to these moments, the reduction schemes conserve the moments associated to their names. The first two schemes, energy conservation, and energy and central heat flux conservation (Ct. HF), are the reduction schemes of Rjasanow and Wagner, and the third one is our reduction scheme which conserves the pressure tensor (PT) and central heat flux (Ct. HF). 
To compare these schemes, we compute the relative 2-norm errors for each of the third order moments of each group and take their average over all the groups, that is we let
\begin{equation}\label{eq:total error}
 \mathcal{E}\;=\;\frac{1}{\#\text{Grps}}\displaystyle \sum_{l=1}^{\#\text{Grps}}\frac{ {|| \mathfrak{m}_{\text{After},l}- \mathfrak{m}_{\text{Before},l}||}_{2}}{{||\mathfrak{m}_{\text{Before},l}||}_2}.
 \end{equation}
We obtained these average relative errors for the first ten reductions, and show the maximum of these errors in \cref{tab: table errors}. The errors for the pressure tensor and central heat flux scheme are smaller than $10^{-14}$, which is negligible. However, in the third column of the table, we observe that for the energy and central heat flux scheme the raw heat flux error is about $2\times 10^{13}$ times larger than that for the pressure tensor and central heat flux scheme. 
These results support the theory in \cref{sec:reduction method} that the energy and central heat flux scheme only conserves the central heat flux in each group, and does not conserve the raw heat flux, while the pressure tensor and central heat flux scheme conserves both. Furthermore, the energy scheme has the largest error for both third order moments. 
Since this scheme replaces a group by two particles with equal weights and opposite velocities relative to the drift velocity, the central heat flux of the group after reduction is zero. This observation explains why the relative error in the central heat flux is 1
for the energy reduction scheme.

   In \cite{rjasanow2005stochastic}, Rjasanow and Wagner observed that the higher order moments of a distribution are conserved statistically when averaged over a large number of ensembles, even if the reduction scheme only conserves the lower order moments. However, they found that the existing deterministic reduction schemes require a larger initial number of computational particles for the convergence of the scalar fourth order moment than for the lower moments. To examine this, for each reduction scheme we studied the convergence of (1,1)-component of the momentum flux tensor, $\Pi_{1,1}$, the second component of the raw heat flux, $\mathbf{h}_2$, and the scalar fourth order moment,  
 \begin{equation}
s\;=\;\displaystyle \sum_{i=0}^{m} g_i |\mathbf{v}_i|^4,
\end{equation}
as we increase $m_0$. For this study, we chose the initial condition to be a mixture of Maxwellian distributions, since for this pdf there is an analytical formula for the given moments as a function of time, $t$ \cite{rjasanow2005stochastic}. The initial distribution is given by
 \begin{equation}
 f_0 (\mathbf{v})\,=\,\alpha\,M_{\mathbf{V}_1,T_1}(\mathbf{v})+(1-\alpha)\,M_{\mathbf{V}_2,T_2}(\mathbf{v}), 
 \end{equation}     
where $M_{\mathbf{V}_1,T_1}(\mathbf{v})$ and $M_{\mathbf{V}_2,T_2}(\mathbf{v}) $ are Maxwellian distributions with drift velocities $\mathbf{V}_1$  and $\mathbf{V}_2$, and temperatures $T_1$ and $T_2$, respectively. We chose $\alpha\,=\,0.5$, $\mathbf{V}_1\,=\,\langle-2,2,0 \rangle$, $\mathbf{V}_2\,=\,\langle2,0,0 \rangle$, and $T_1\,=\,T_2\,=\,1$. We performed two sets of simulations in which we studied the short term (transient) 
behavior of $s$ in the time interval $[0,3]$. 
For this study, we calculated the relative error of the moments and the half-width of the $99.9\%$ confidence interval as a function of time in the interval $[0,3]$. The relative error for a moment $\mathfrak{m}$ is given by
\begin{equation}\label{eq:MomentError}
E\;=\; \frac{|\mathfrak{m}_{\text{anal}}-\overline{\mathfrak{m}}|}{|\mathfrak{m}_{\text{anal}}|},
\end{equation}
where $\overline{\mathfrak{m}}\,=\,\displaystyle \frac{1}{N} \sum_{i=1}^N\,\mathfrak{m}_{i}$ is the average of the simulated moments over the $N$ ensembles. Similarly, the half-width of the relative confidence interval is given by
\begin{equation}\label{eq:CI}
CI\;=\;\frac{z_{(1-\frac{\alpha}{2})}}{|\mathfrak{m}_{\text{anal}}|}\,\sqrt{\frac{\sigma^2}{N}},
\end{equation}
where  $\sigma^2\,=\,\displaystyle \frac{\sum_{i=1}^N \big(\mathfrak{m}_{i}-\overline{\mathfrak{m}} \big)^2 }{N-1}$ is the variance of the simulated moments and $z_{(1-\frac{\alpha}{2})}$ is the $z$-score for the confidence interval with $\alpha\,=\,10^{-3}$. A statistical simulation computes a moment accurately if 
$E < CI$, that is if there is a high probability that $\mathfrak{m}_{\text{anal}}$
lies in the confidence interval centered at $\overline{\mathfrak{m}}$, and that
this confidence interval is relatively narrow.

 \begin{figure}[!htbp] 
   \label{ABseries} 
   \minipage[t]{0.5 \textwidth}
     \centering
     \includegraphics[width=\textwidth]{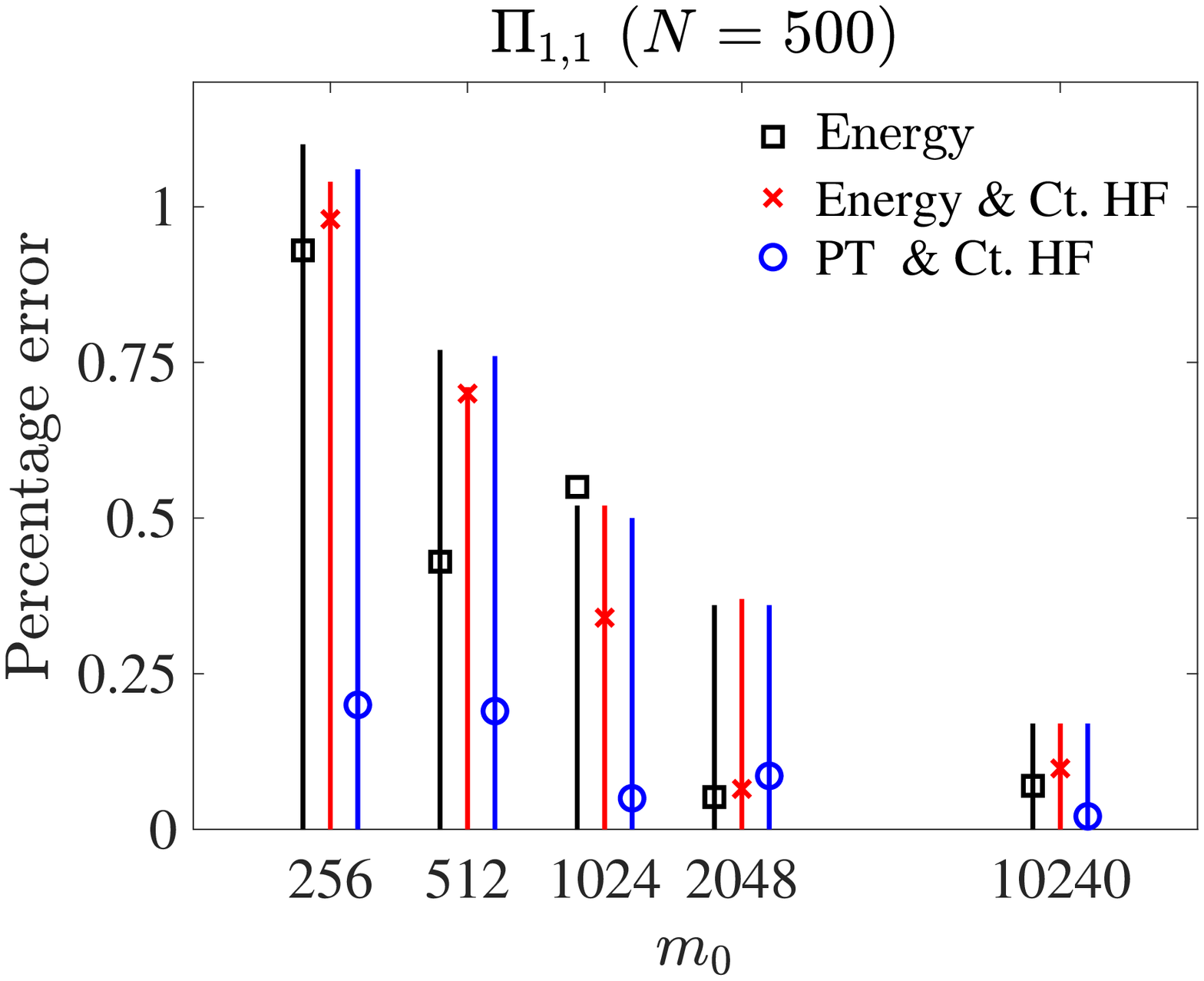} 
       \endminipage 
   \minipage[t]{0.5 \textwidth}
     \centering
      \includegraphics[width=\textwidth]{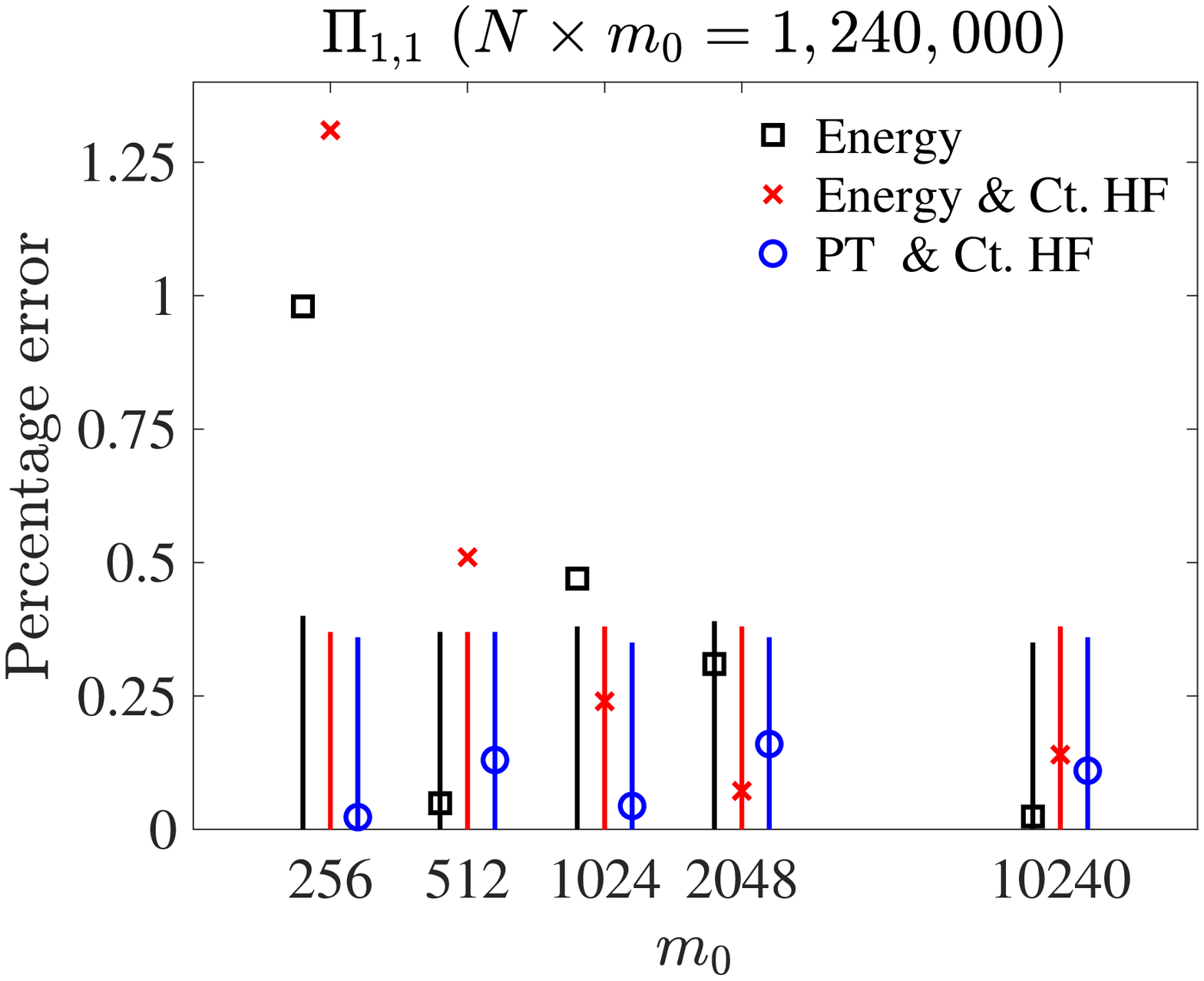}
       \endminipage 
       \medskip \hfill
    \minipage[t]{0.5 \textwidth}
     \centering
      \includegraphics[width=\textwidth]{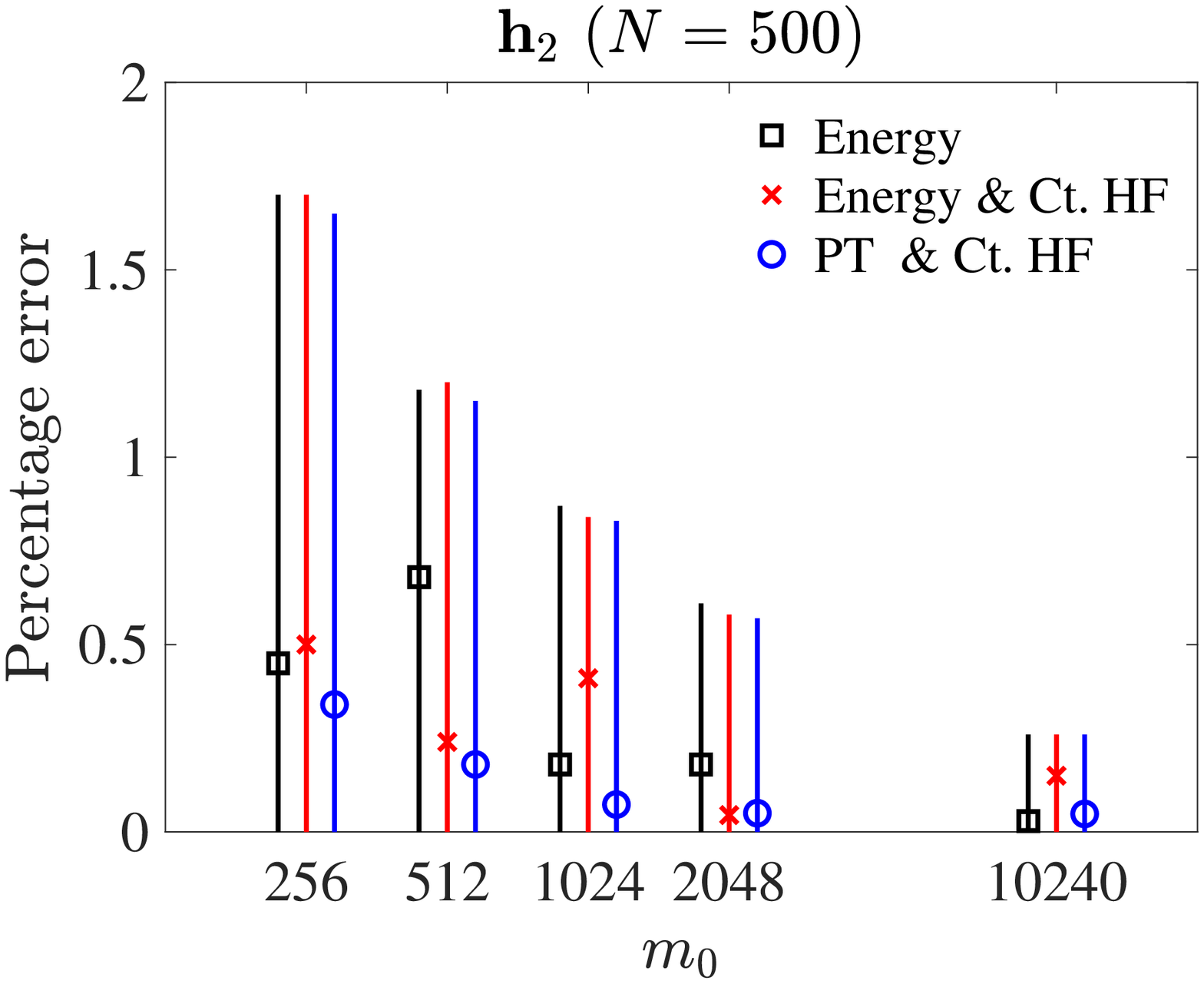}
       \endminipage 
          \minipage[t]{0.5 \textwidth}
     \centering
      \includegraphics[width=\textwidth]{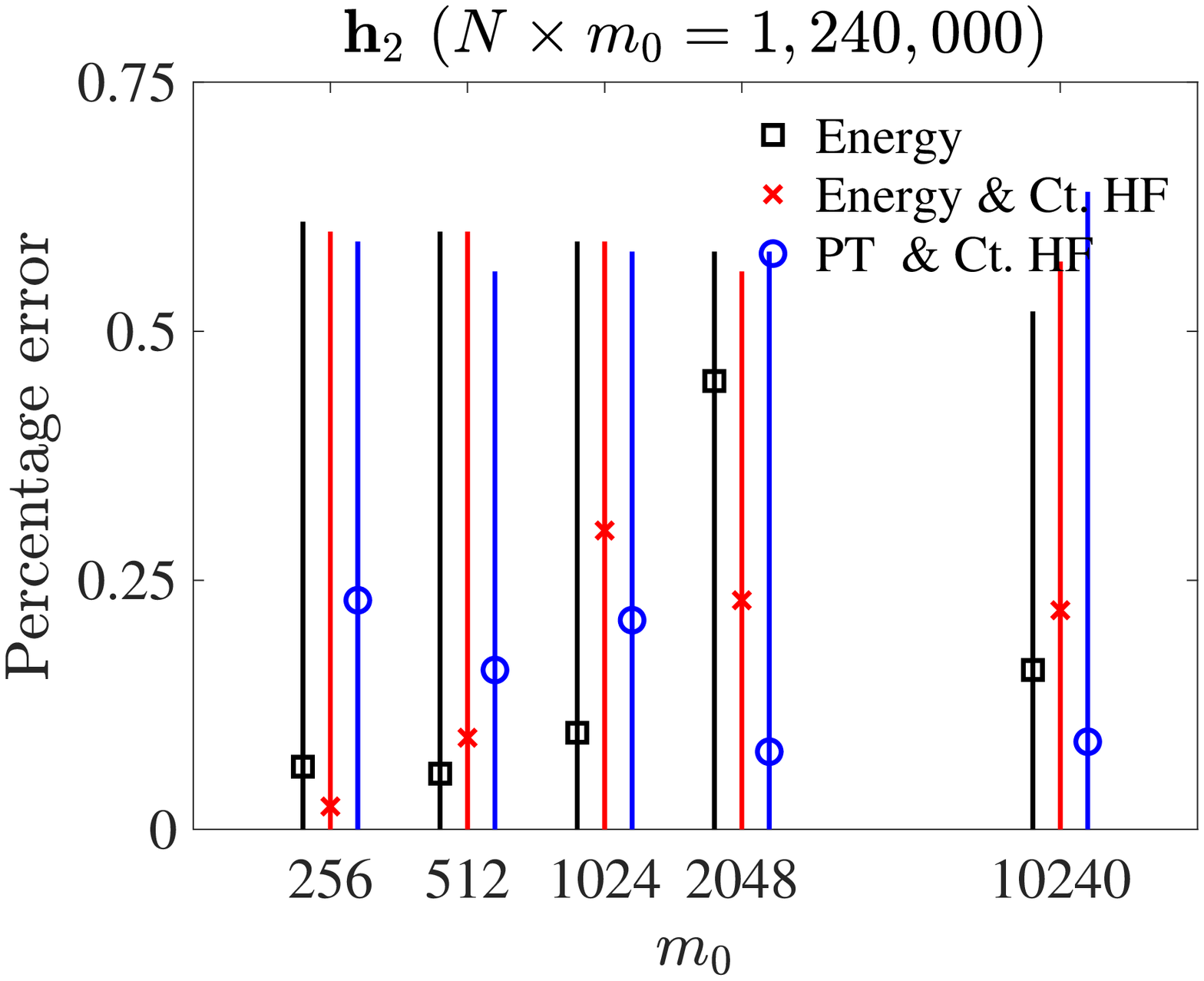}
       \endminipage 
       \medskip \hfill
         \minipage[t]{0.5 \textwidth}
     \centering
      \includegraphics[width=\textwidth]{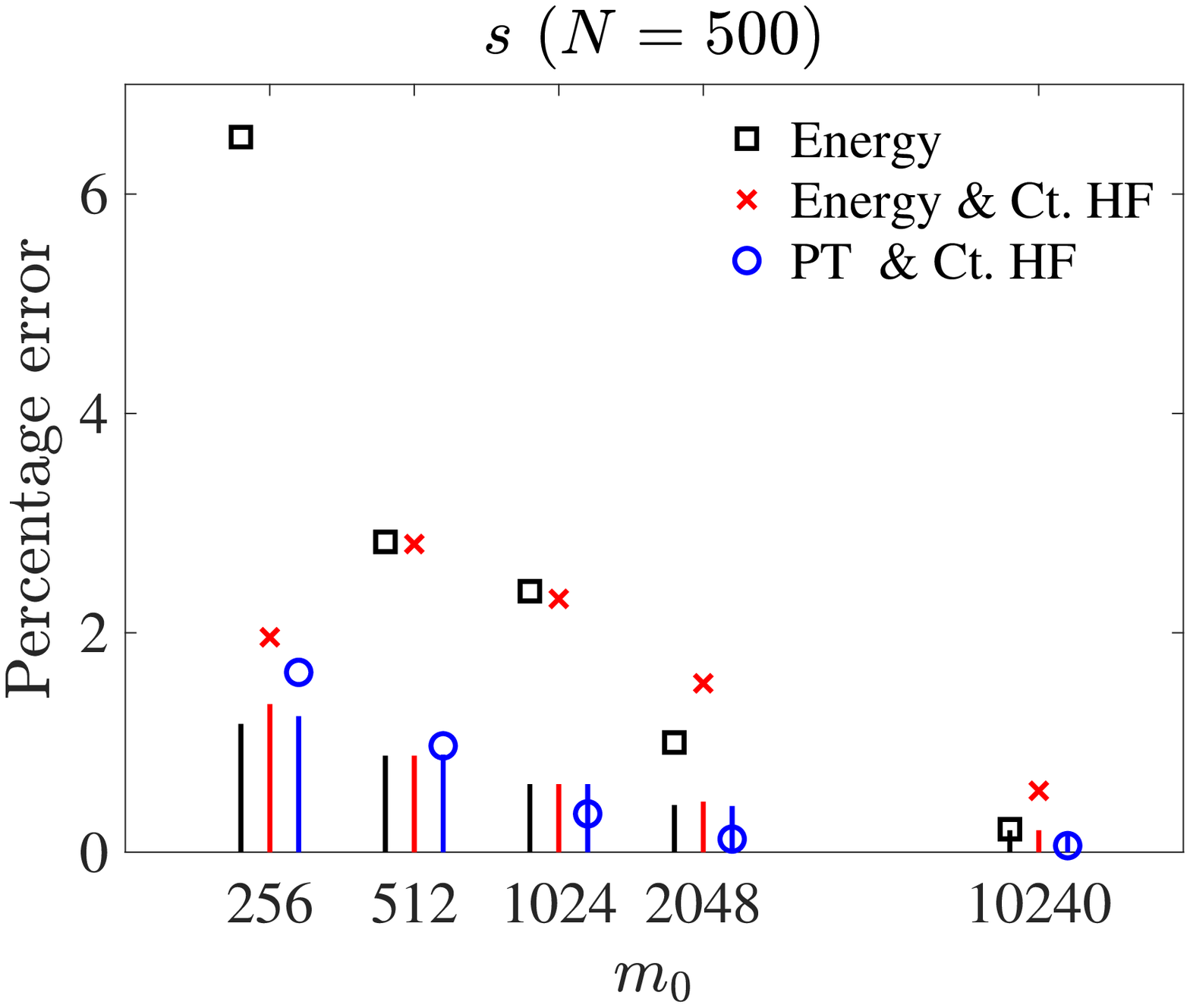}
       \endminipage 
          \minipage[t]{0.5 \textwidth}
     \centering
      \includegraphics[width=\textwidth]{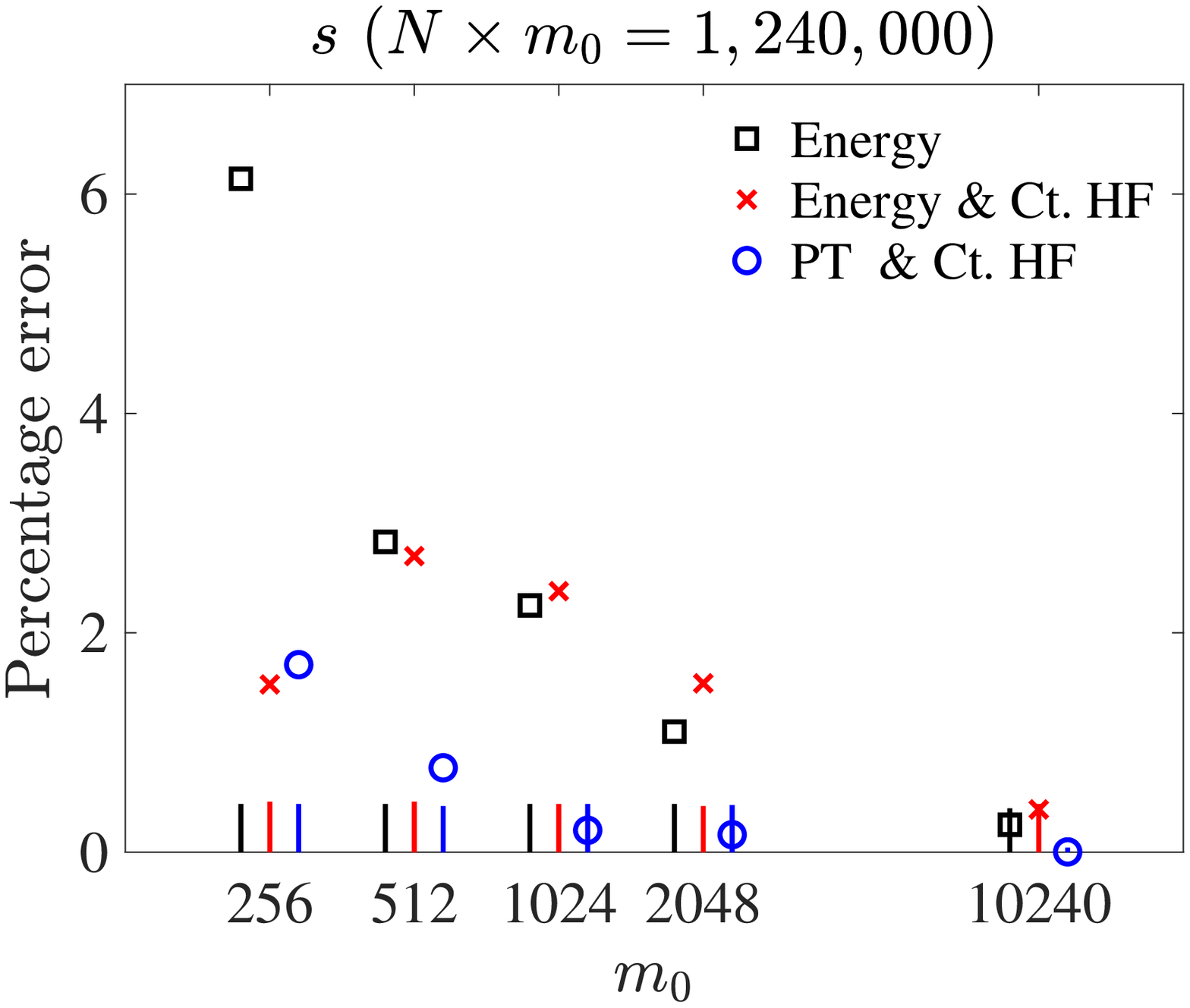}
       \endminipage 
       \medskip \hfill
  \caption{
  Percentage relative error, $E$, in \eqref{eq:MomentError} at time $t=3$ for selected moments, 
  $\mathfrak m$,  of the velocity pdf. 
We compare the performance of  the SWPM with the 
  three different reduction schemes shown in the legends.
  We show results for the $(1,1)$-component of the
  momentum, $\Pi_{1,1}$, (top row), the second component of the raw heat flux, $\mathbf h_2$, (middle row), and the scalar fourth-order moment, $s$, (bottom row).
  We plot the errors using symbols and the half confidence
  intervals with vertical lines, so that $E < CI$ when the symbol lies on the line.
  In the left column, we plot  $E$ as a function of the number of particles, $m_0$, 
  per ensemble for $N=500$ ensembles. In the right column, we plot
  $E $ as a function of $m_0$ when $N$ is chosen so that
  $N\times m_0 = 1,240,000$.}
\end{figure}

 In the first set of simulations, we used $N =$ 500 ensembles and various initial numbers of computational particles, $m_0$.  In the left column of \cref{ABseries}, we show the
 relative error, $E$, in  \eqref{eq:MomentError}  and confidence interval, $CI$, in
 \eqref{eq:CI} at time $t=3$  for 
 the $(1,1)$-component of the momentum, $\Pi_{1,1}$, (top), the second component of the raw heat flux, $\mathbf h_2$, (middle), and the scalar fourth-order moment, $s$, (bottom).
The  percentage  relative error, 
for the energy, the energy and central heat flux, and the pressure tensor and 
central heat flux reduction schemes are shown using the symbols in the legends. 
The half-width of the relative confidence intervals are shown using the corresponding vertical lines.
These quantities are plotted for the different values of $m_0$, which is displayed using a 
 logarithmic scale. 
 For each value of $m_0$, we have offset the results for the three reduction schemes from each other to aid comprehension. 
 
First, we observe that for each moment, the confidence intervals  primarily depend on 
$m_0$ rather than on the reduction scheme. Furthermore, with one slight exception,
the errors for $\Pi_{1,1}$ and $\mathbf{h}_2$ are within the confidence intervals, even for a small number of computational particles.  For $\Pi_{1,1}$, this result is to be expected  since all three reductions  schemes are designed to conserve momentum. 
However, as we saw in \cref{tab: table errors}, the energy and  energy and central heat flux 
reduction schemes do not preserve the raw heat flux. 
Therefore, the accuracy of the computation of
 $\mathbf{h}_2$ with these two schemes is simply due to statistical averaging
 over the  500 ensembles. Significantly, in most cases the errors for 
 $\Pi_{1,1}$ and $\mathbf{h}_2$ are smaller with the pressure tensor and central heat flux
 scheme than with the other two reduction schemes. 
 
 The main advantage to be gained from using the new pressure tensor and heat flux
reduction scheme can be seen in the results for the scalar fourth-order moment, $s$ (see the bottom left panel of \cref{ABseries}). 
 With our method,  the error in $s$ lies within the confidence interval
 for  $m_0 \geq $ 1,024. However, with the other two methods
 the errors are larger than the width of the confidence interval,
 even for $m_0 =$ 10,240. Therefore, the energy conservation, and energy and central heat flux conservation reduction schemes require more than $10$ times the initial number of computational particles as the pressure tensor and heat flux conservation scheme to approximate the scalar fourth-order moment with the same degree of accuracy. As we see in \cref{table:timings}, this requires at least seventeen times the computational time. 

 \begin{table}[!htbp]
  {\footnotesize
    \caption{Total computational time for the simulation results shown in \cref{ABseries}.
    These results were obtained with the pressure tensor and central heat flux
    scheme. The computational times for the two reduction schemes of Rjasanow and Wagner were similar ($\approx \pm 10\%$).} 
    \label{table:timings}
    \begin{center}
    \begin{tabular}{|c|c|c|} 
 \cline{2-3}
      \multicolumn{1}{c|}{}& \multicolumn{1}{c|}{$N=500$}& \multicolumn{1}{c|}{$N\times m_0 = 1,240,000$}\\
     \hline
      $m_0$ &  $t$ (sec) & $t$ (sec) \\
       \hline       
      256 & 2.61 & 20.85 \\
      512 &  6.06 & 22.02\\
       1,024 &13.76 & 24.93 \\
       2,048  & 31.26& 31.26 \\
       10,240 &  205.56 & 39.05\\
       \hline
    \end{tabular}
  \end{center}
  }
\end{table}

 \begin{figure}[!htbp] 
   \label{fig1: s} 
   \minipage[t]{0.33 \textwidth}
     \centering
     \includegraphics[width=\textwidth]{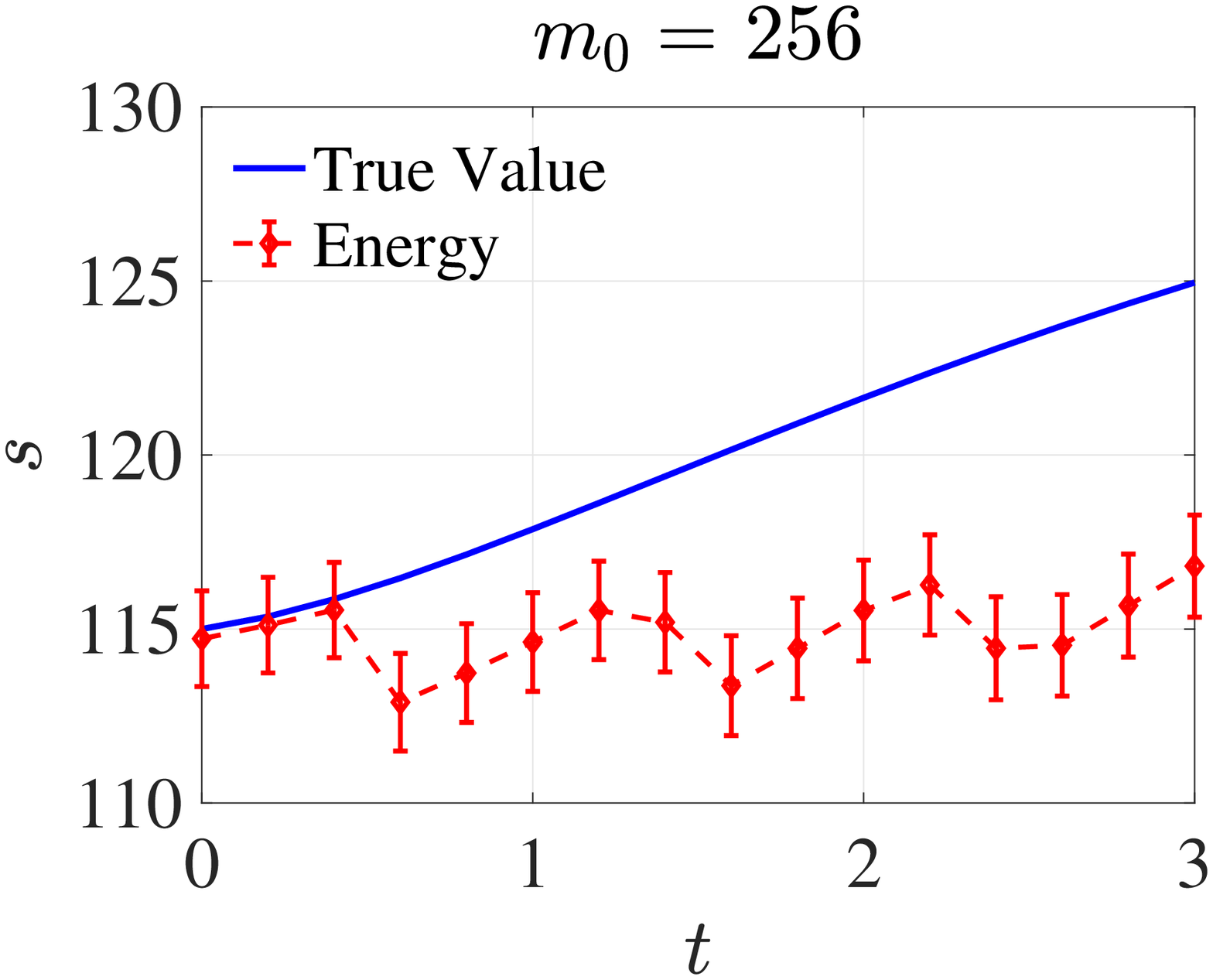} 
       \endminipage 
   \minipage[t]{0.33 \textwidth}
     \centering
      \includegraphics[width=\textwidth]{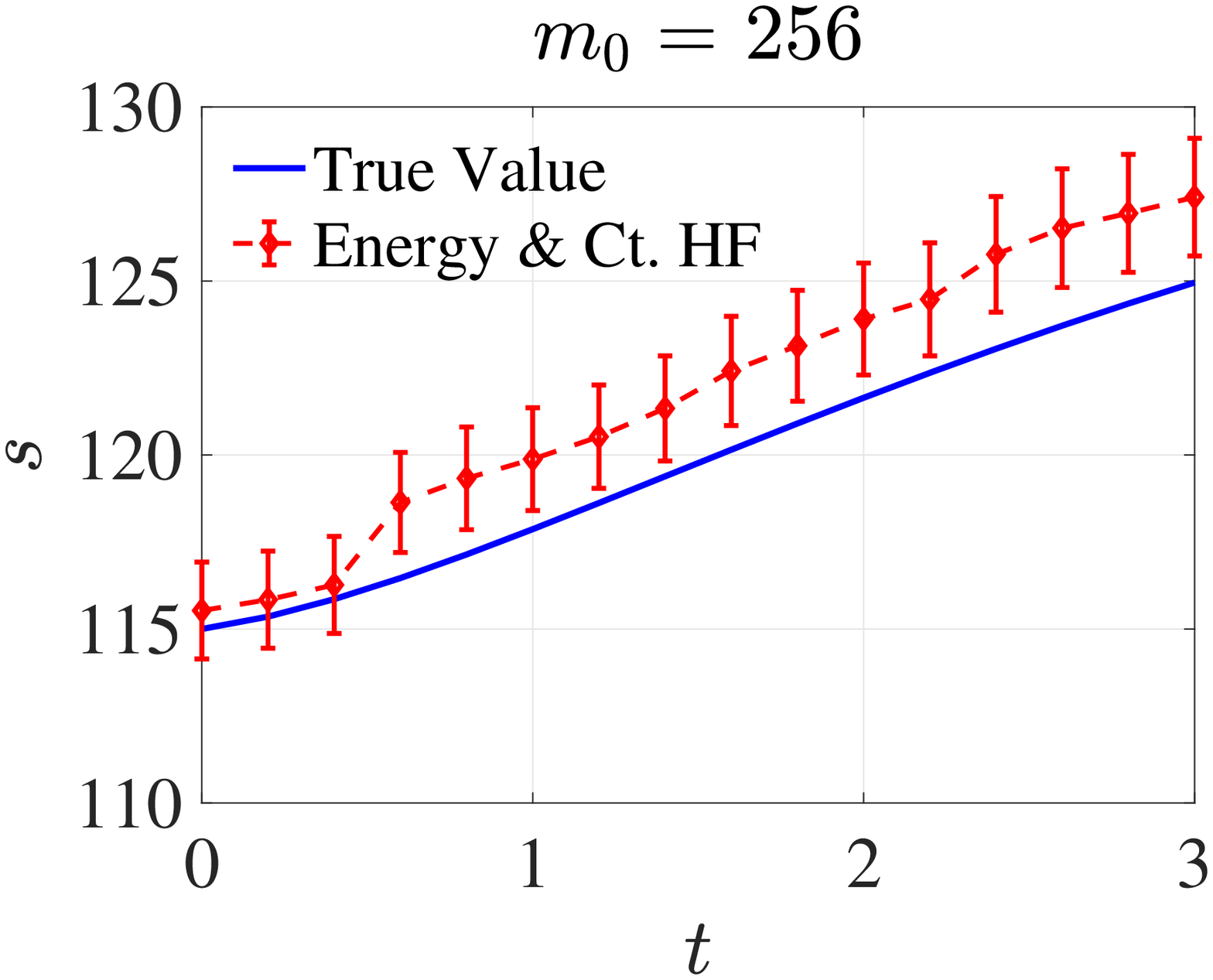}
       \endminipage 
    \minipage[t]{0.33 \textwidth}
     \centering
      \includegraphics[width=\textwidth]{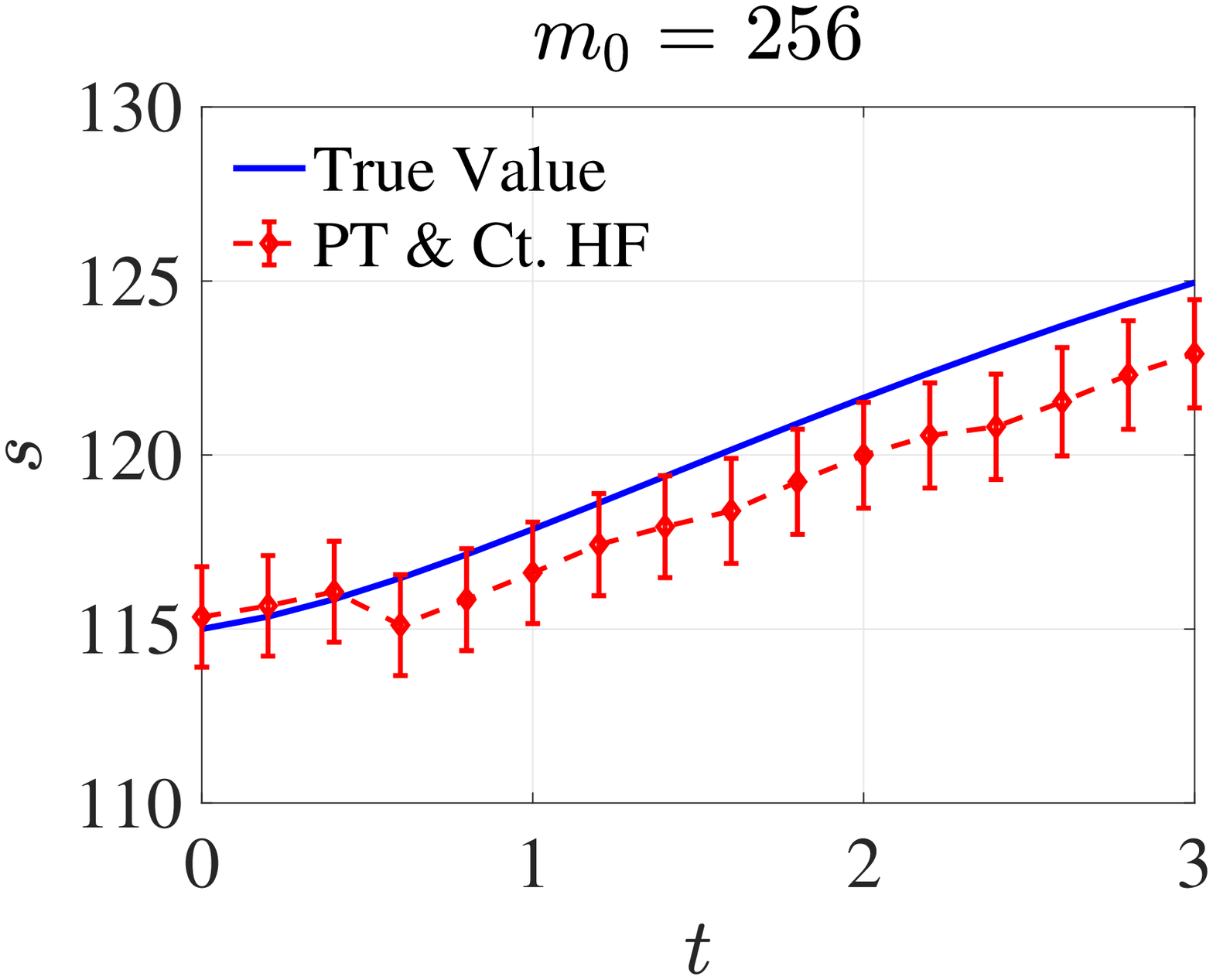}
       \endminipage \medskip \hfill
       \minipage[t]{0.33 \textwidth}
     \centering
     \includegraphics[width=\textwidth]{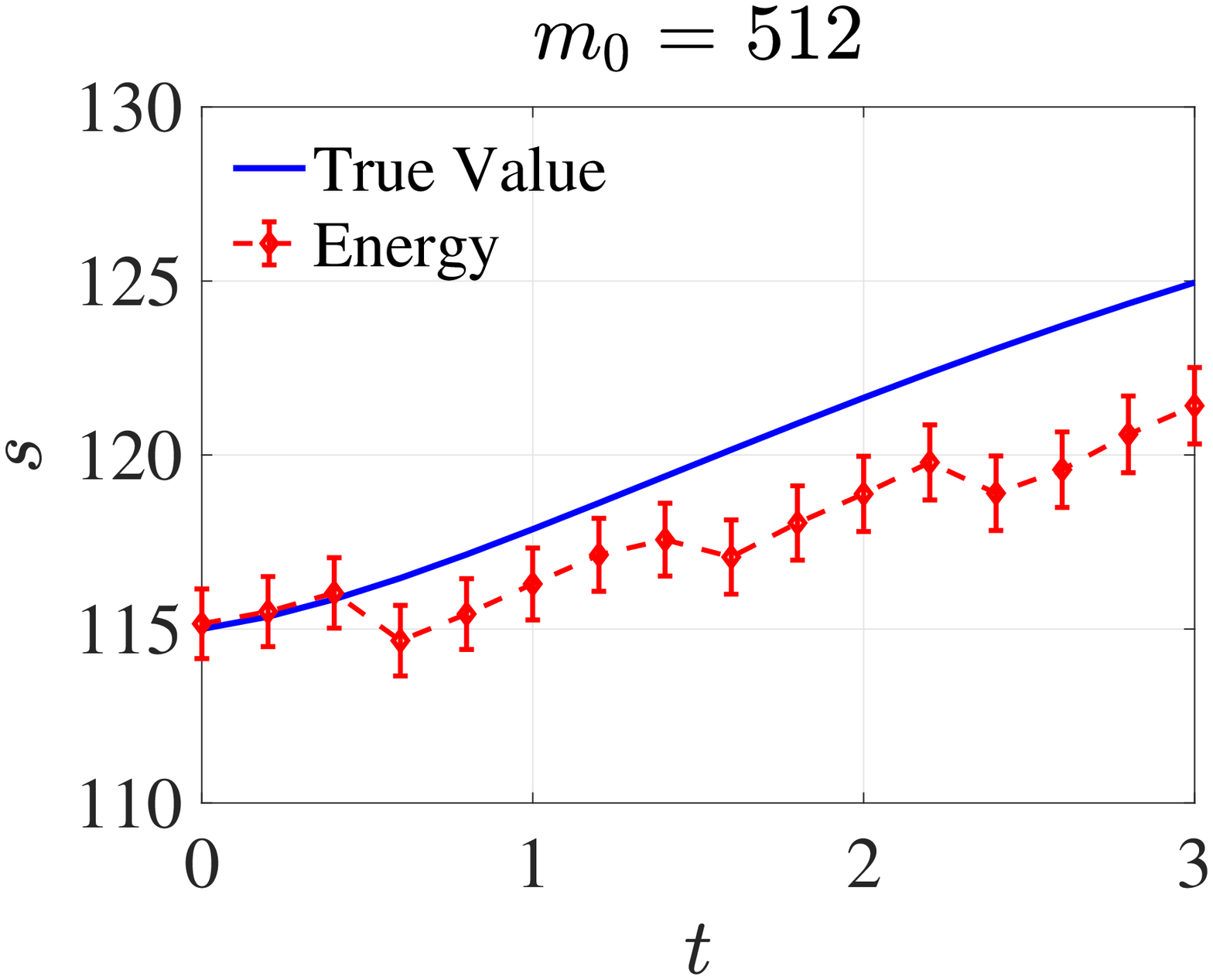} 
       \endminipage
   \minipage[t]{0.33 \textwidth}
     \centering
      \includegraphics[width=\textwidth]{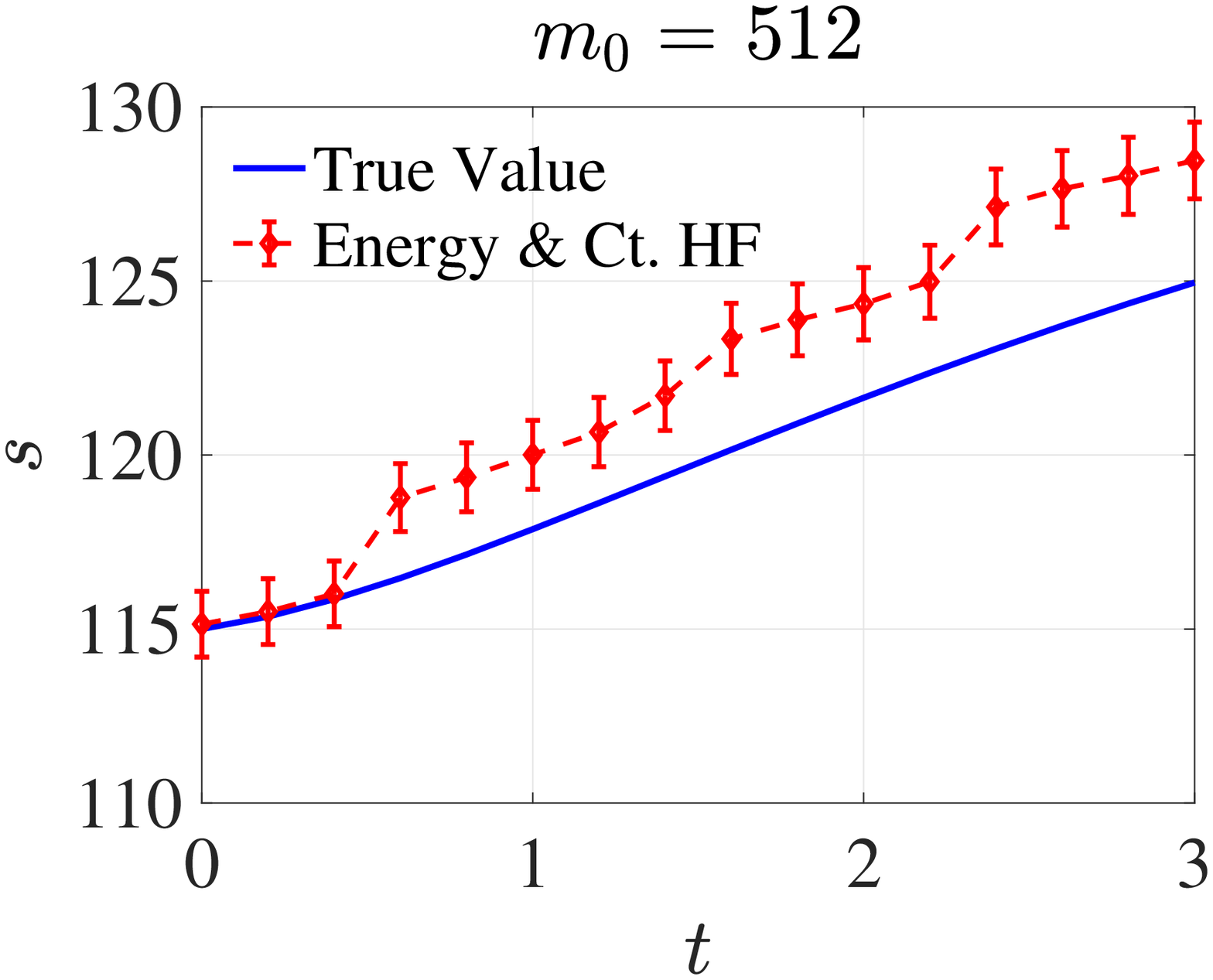}
       \endminipage 
    \minipage[t]{0.33 \textwidth}
     \centering
      \includegraphics[width=\textwidth]{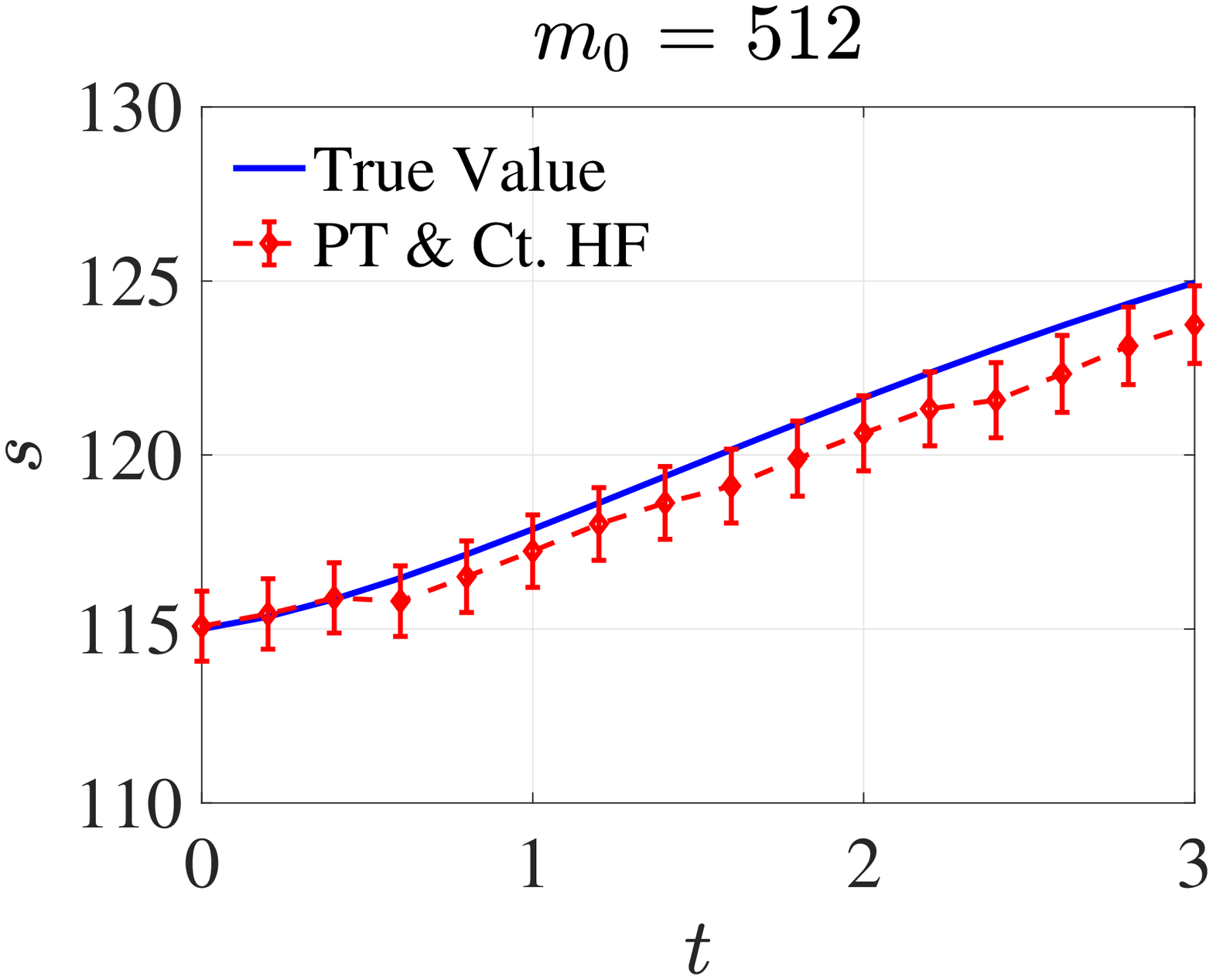}
       \endminipage \medskip \hfill 
 \minipage[t]{0.33 \textwidth}
     \centering
     \includegraphics[width=\textwidth]{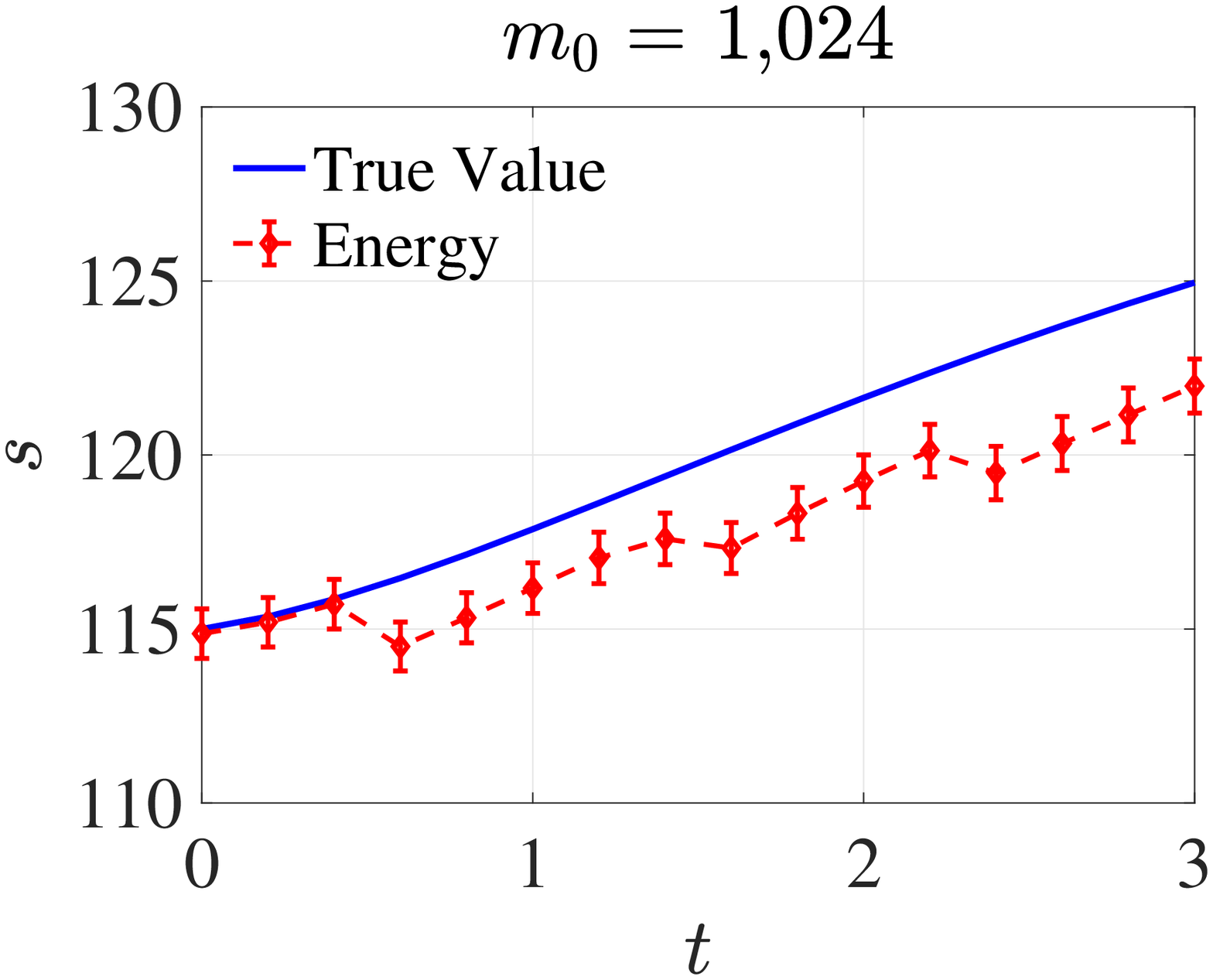} 
       \endminipage
   \minipage[t]{0.33 \textwidth}
     \centering
      \includegraphics[width=\textwidth]{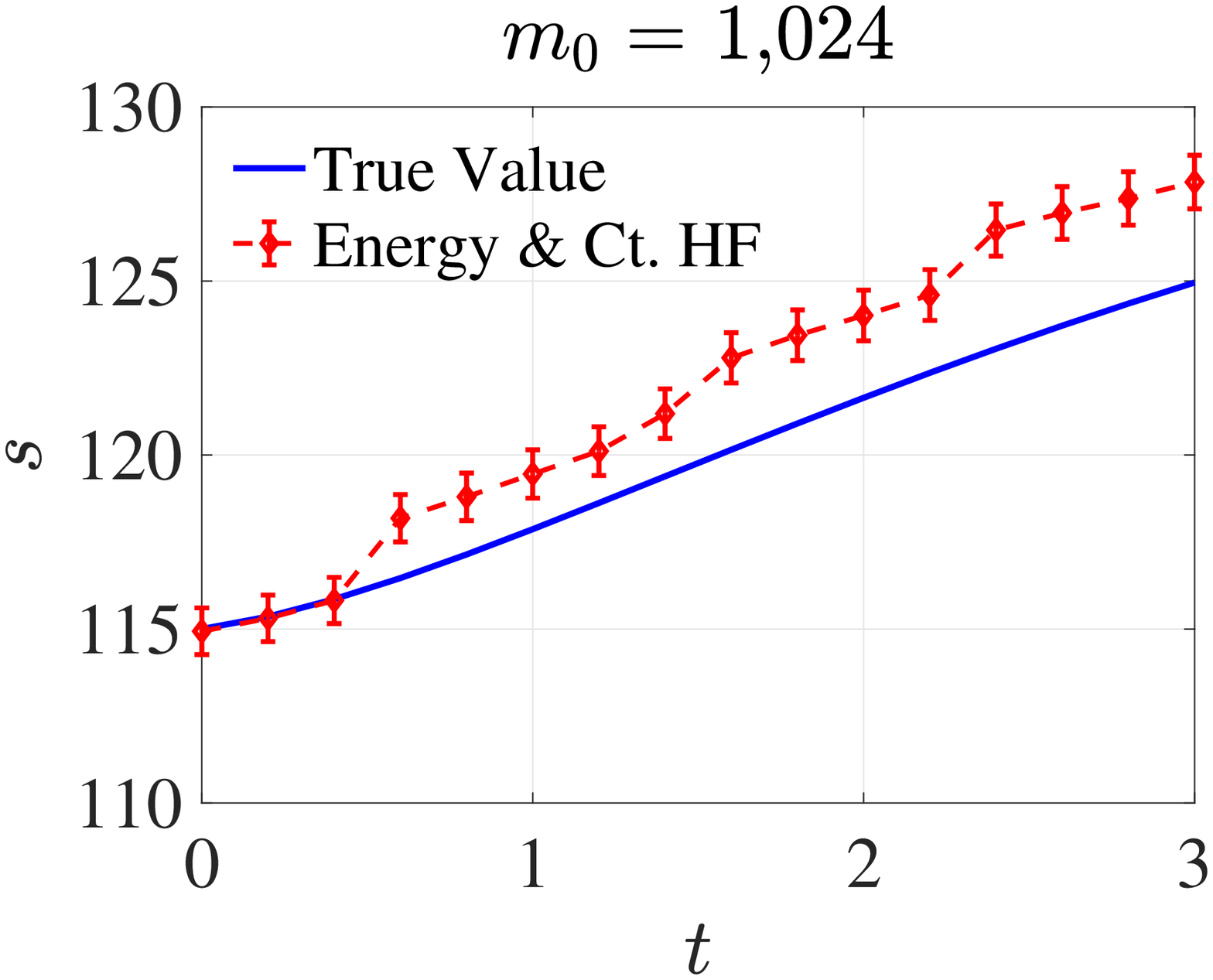}
       \endminipage 
    \minipage[t]{0.33 \textwidth}
     \centering
      \includegraphics[width=\textwidth]{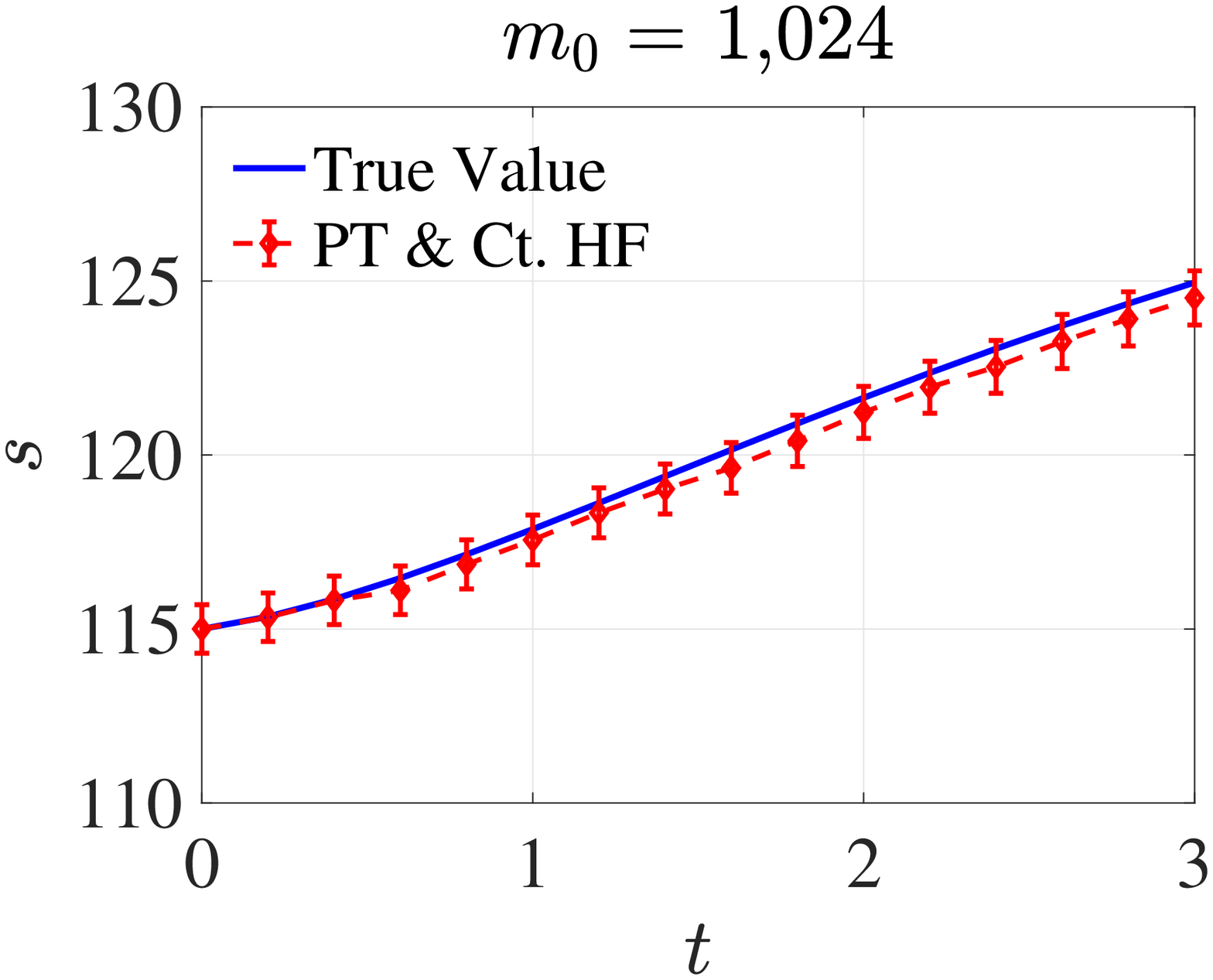}
       \endminipage \medskip \hfill
 \minipage[t]{0.33 \textwidth}
     \centering
     \includegraphics[width=\textwidth]{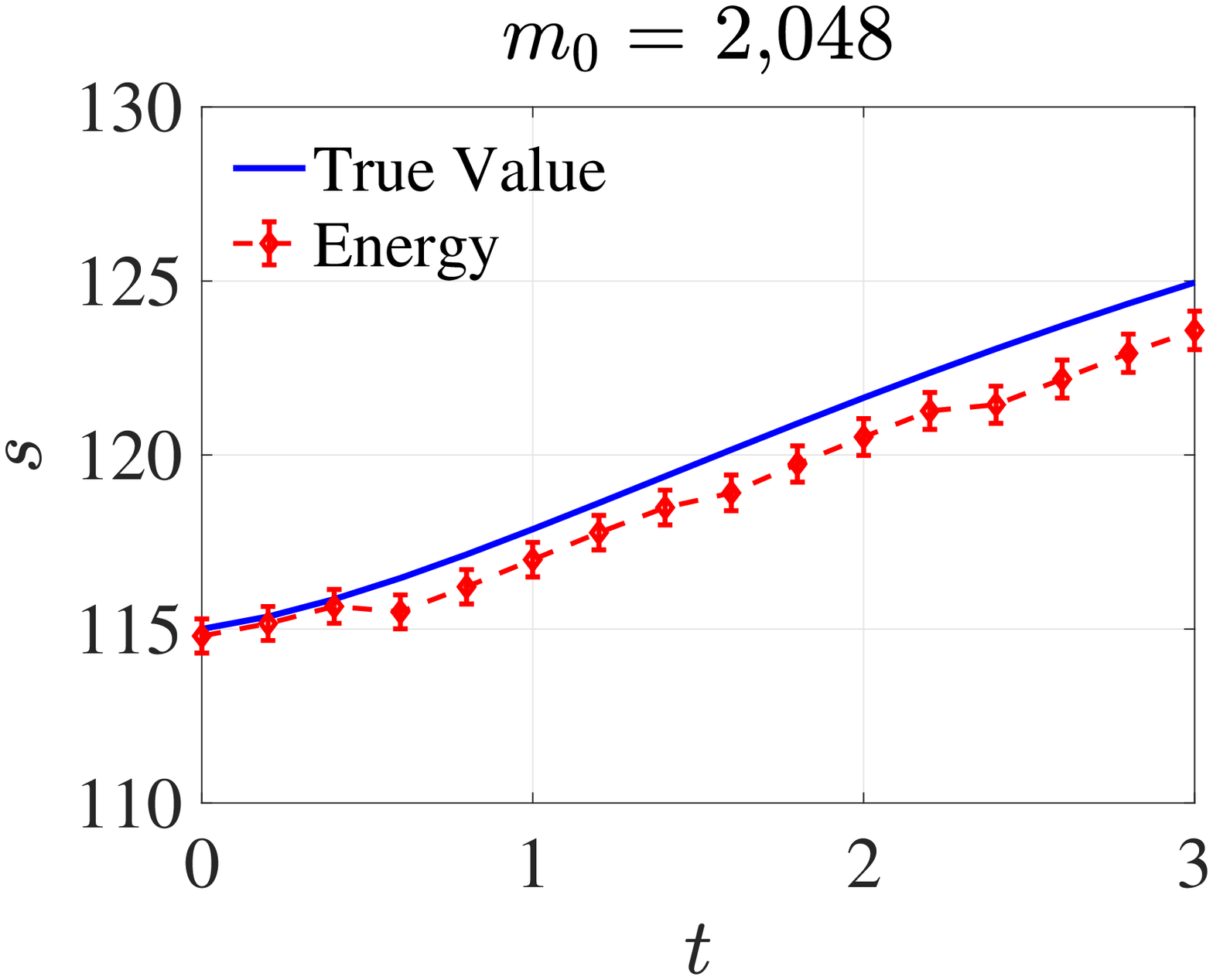} 
       \endminipage
   \minipage[t]{0.33 \textwidth}
     \centering
      \includegraphics[width=\textwidth]{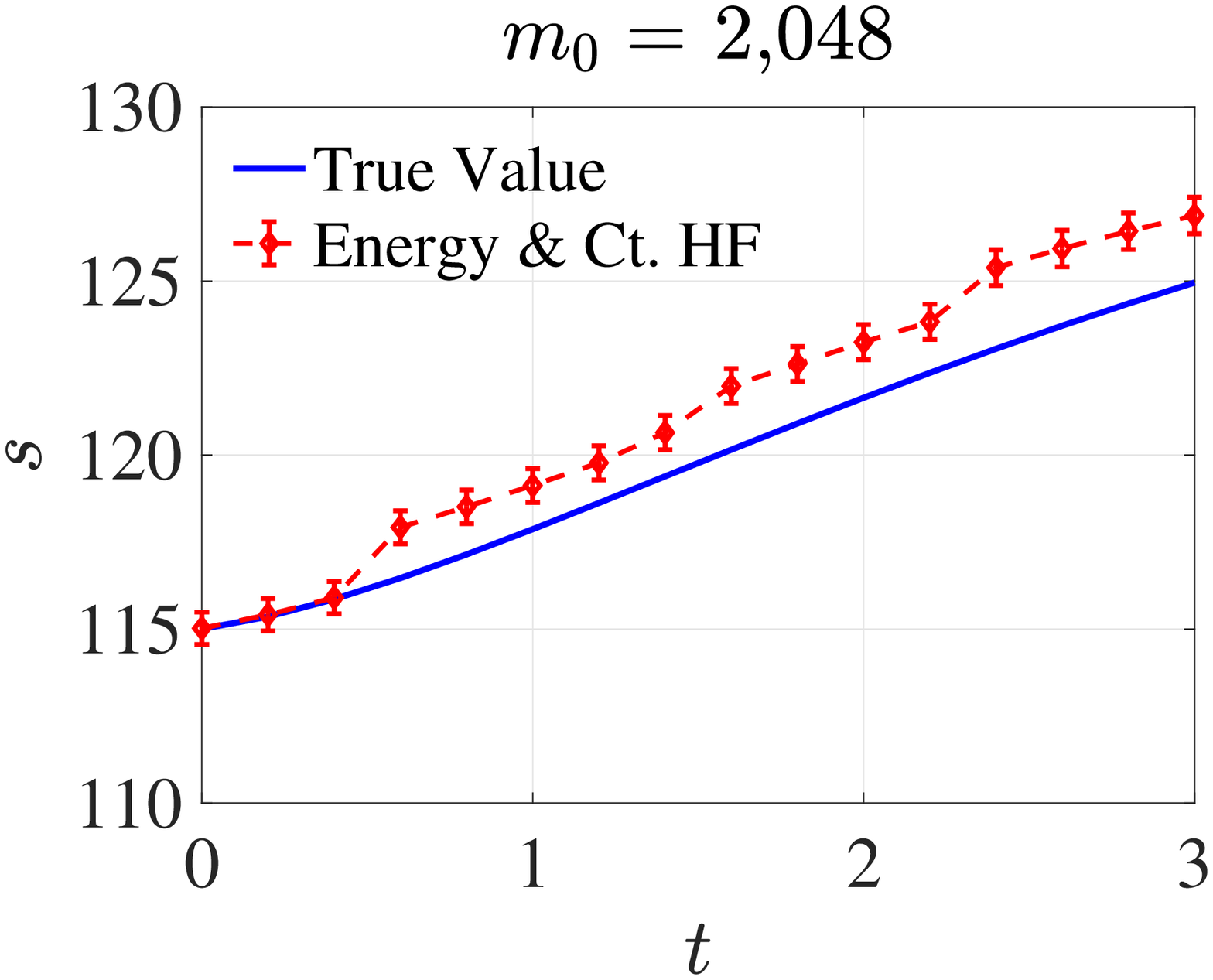}
       \endminipage 
    \minipage[t]{0.33 \textwidth}
     \centering
      \includegraphics[width=\textwidth]{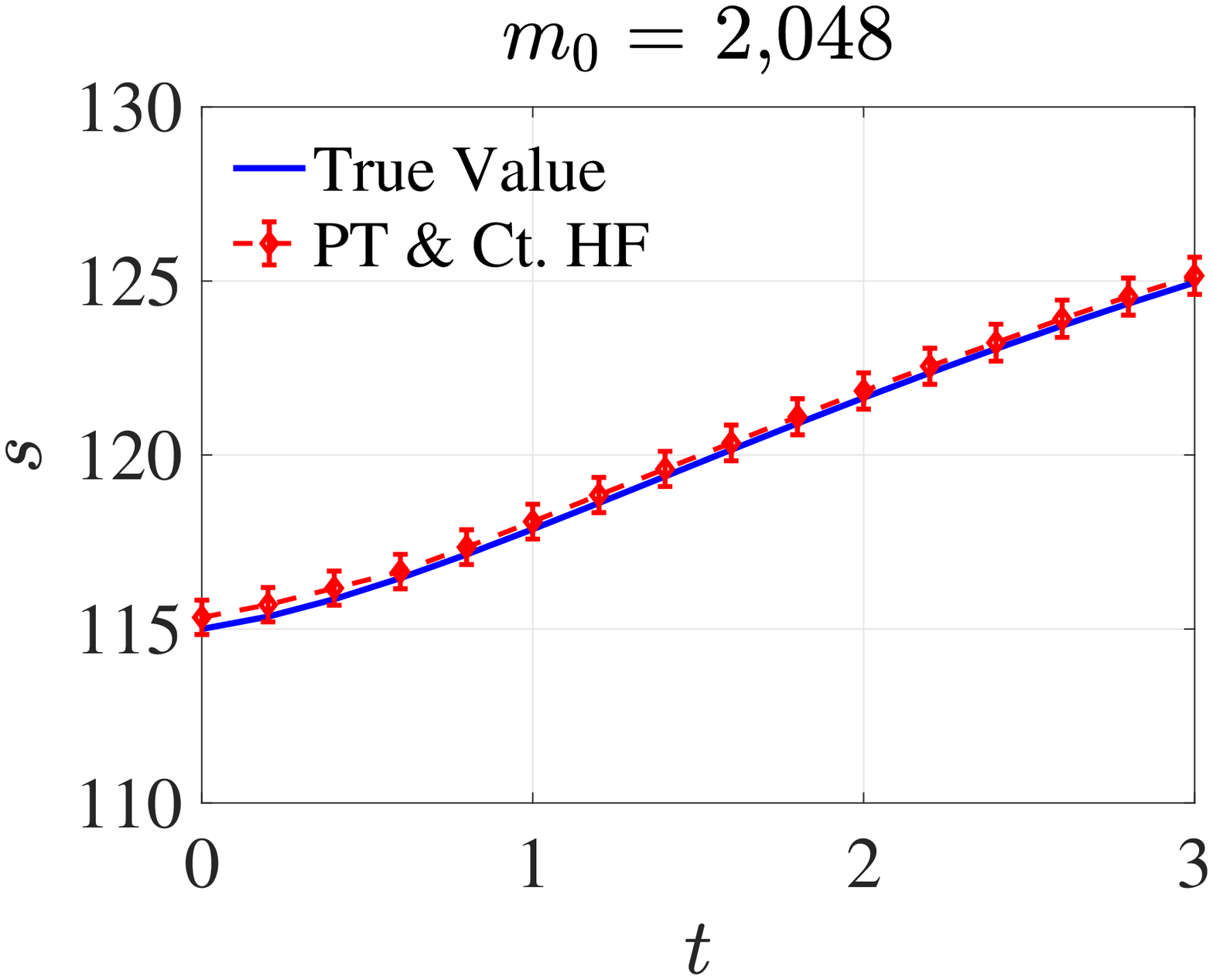}
       \endminipage \medskip \hfill 
 \minipage[t]{0.33 \textwidth}
     \centering
     \includegraphics[width=\textwidth]{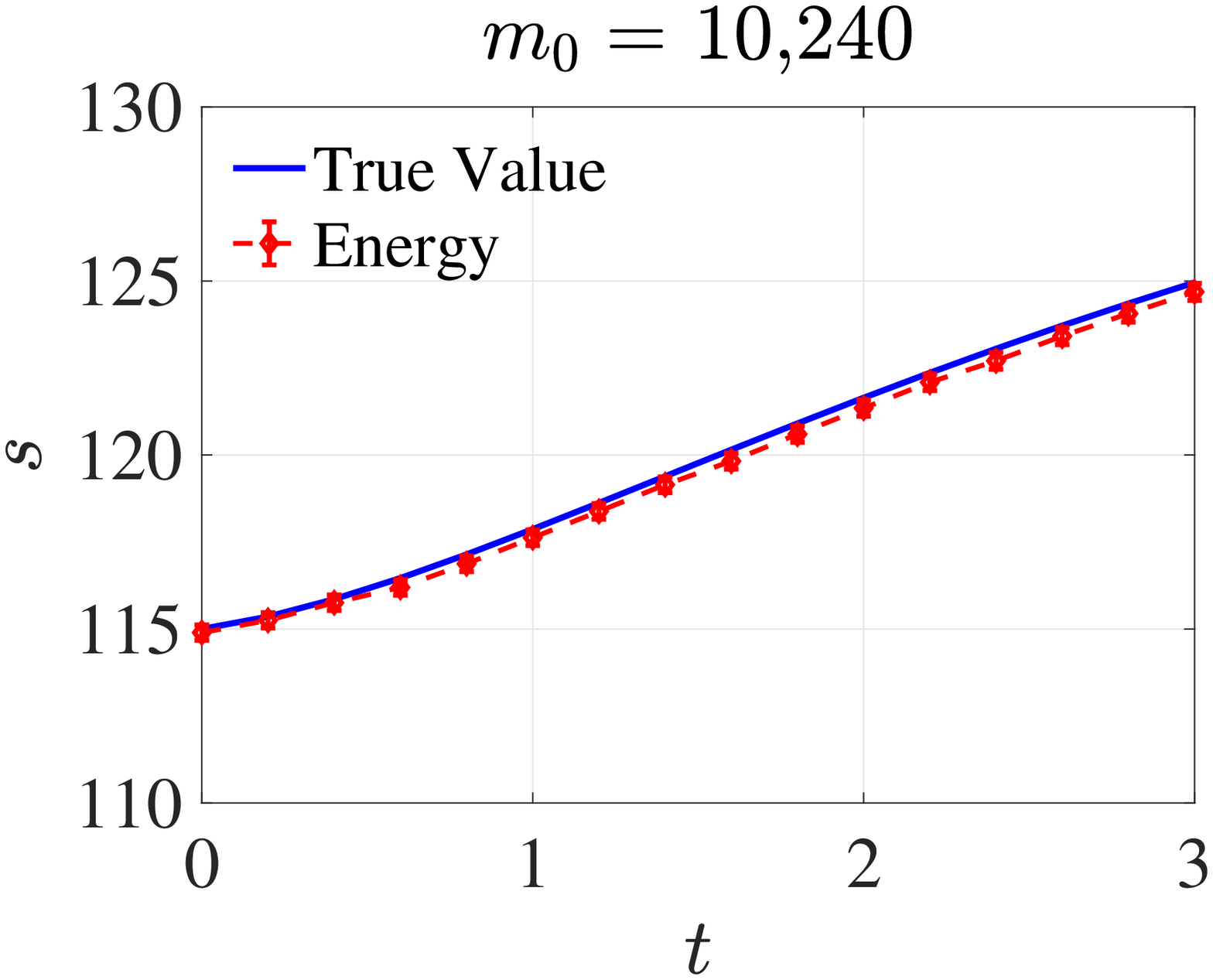} 
       \endminipage
   \minipage[t]{0.33 \textwidth}
     \centering
      \includegraphics[width=\textwidth]{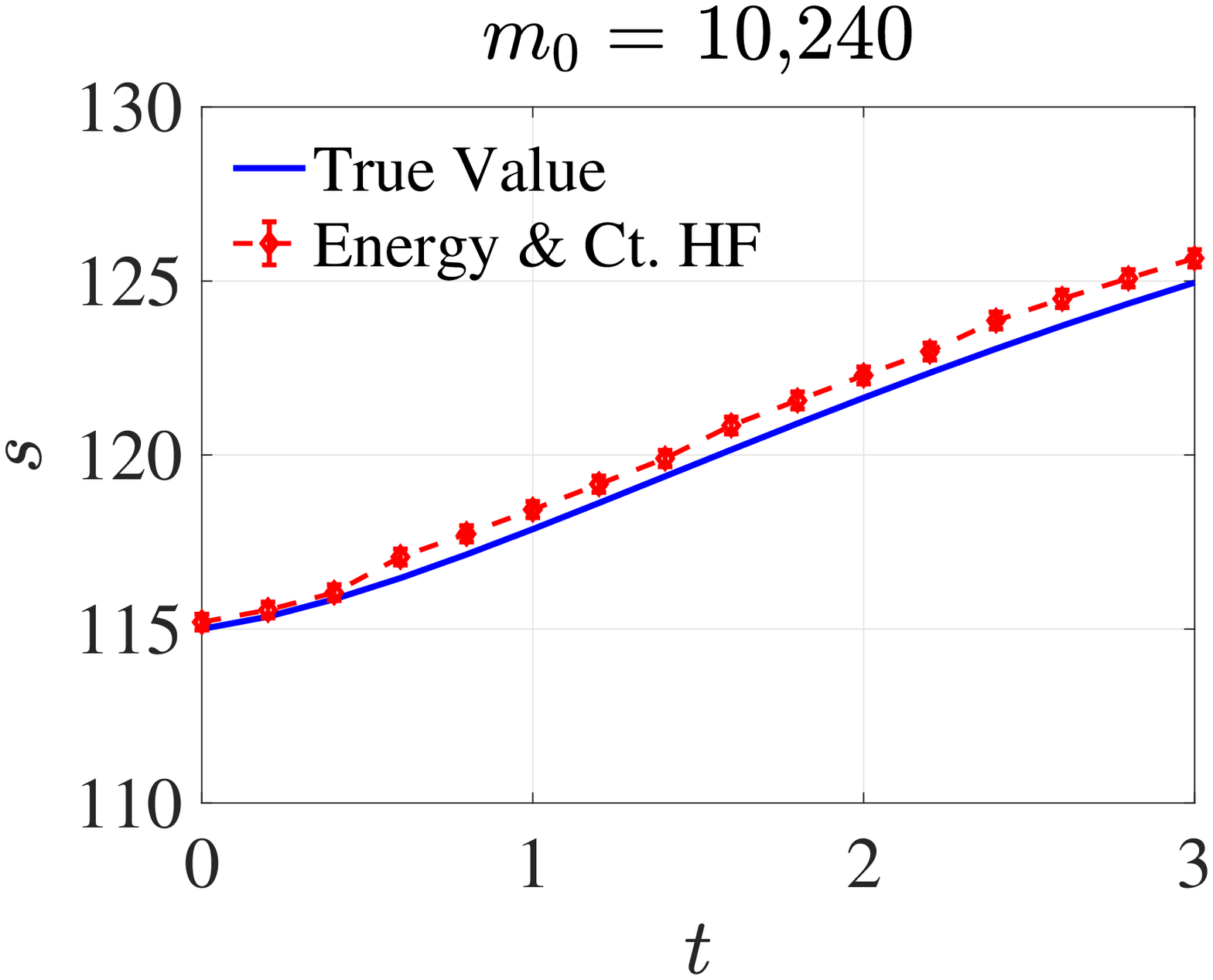}
       \endminipage 
    \minipage[t]{0.33 \textwidth}
     \centering
      \includegraphics[width=\textwidth]{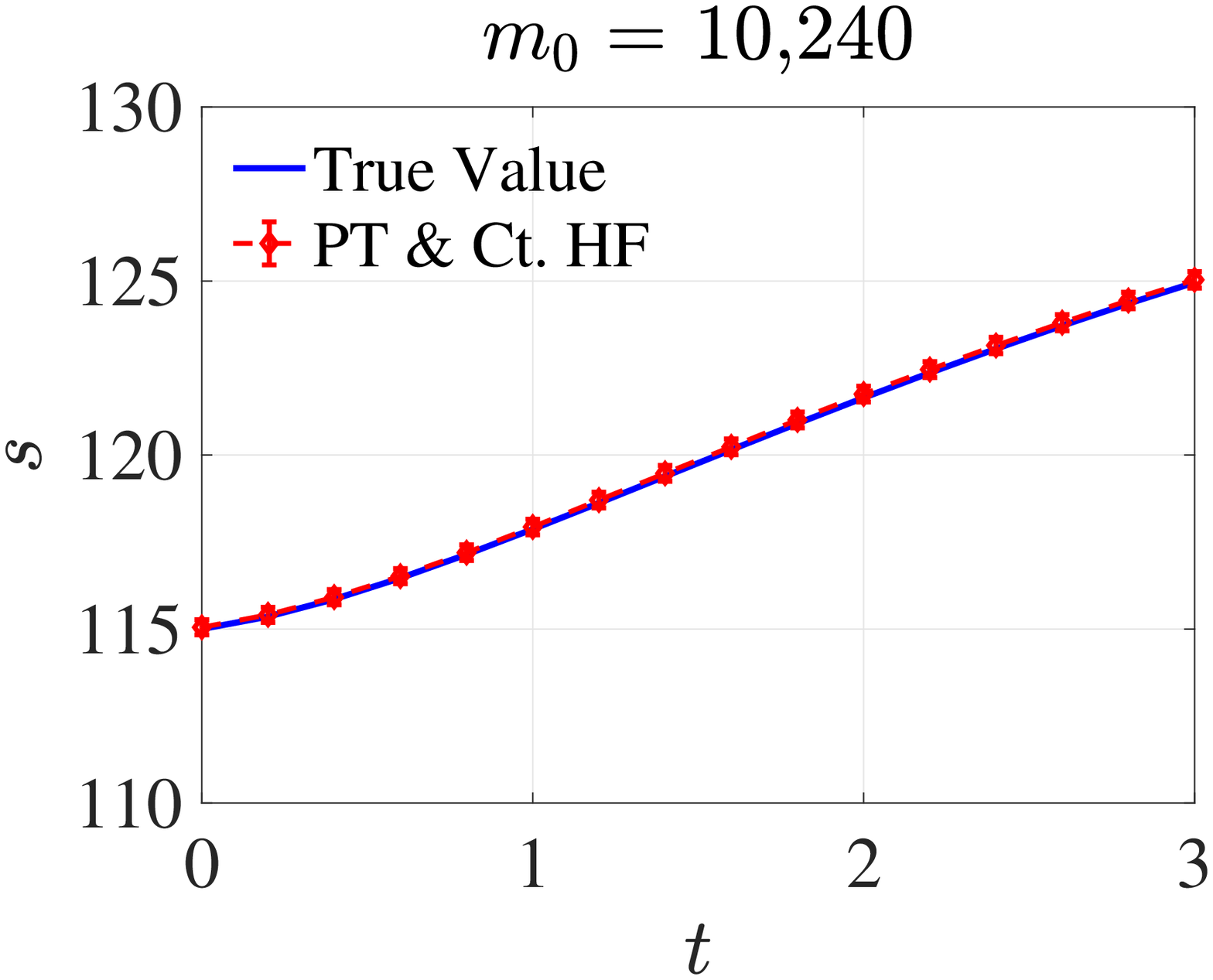}
       \endminipage \hfill 
     \caption{Evolution of the scalar fourth order moment, $s$, as a function of time together with $99.9\%$ confidence intervals. In the different rows we show the results for different initial numbers of computational particles, $m_0$, per ensemble. We used $N =$ 500 ensembles in all the panels. We show the results for the energy scheme (left column), energy and central heat flux scheme (middle column), and pressure tensor and central heat flux scheme (right column).}
\end{figure}

To further examine how accurately the three reductions schemes compute the
scalar fourth-order moment, in  \cref{fig1: s} we plot the
evolution of  $s$ as a function of time, together with $99.9\%$ confidence intervals. 
The numerical results are shown with red-dashed lines and the true values
are shown with solid blue lines. The results for the energy and 
the energy and central heat flux reduction schemes are 
shown in the left and middle columns. 
The numerical results are visually close to the true values only for $m_0=10,240$ (bottom left and middle panels). On the other hand, with the 
pressure tensor and central heat flux scheme (right column), the numerical results
are reasonably accurate across the entire time range for $m_0=1,240$.
In particular, examining each column in turn, we see that the convergence of $s$  is significantly faster for the pressure tensor and central heat flux conservation scheme, than for the other two reduction schemes.

\begin{figure}[!htbp] 
   \label{fig2: s}
   \minipage[t]{0.33 \textwidth}
     \centering
     \includegraphics[width=\textwidth]{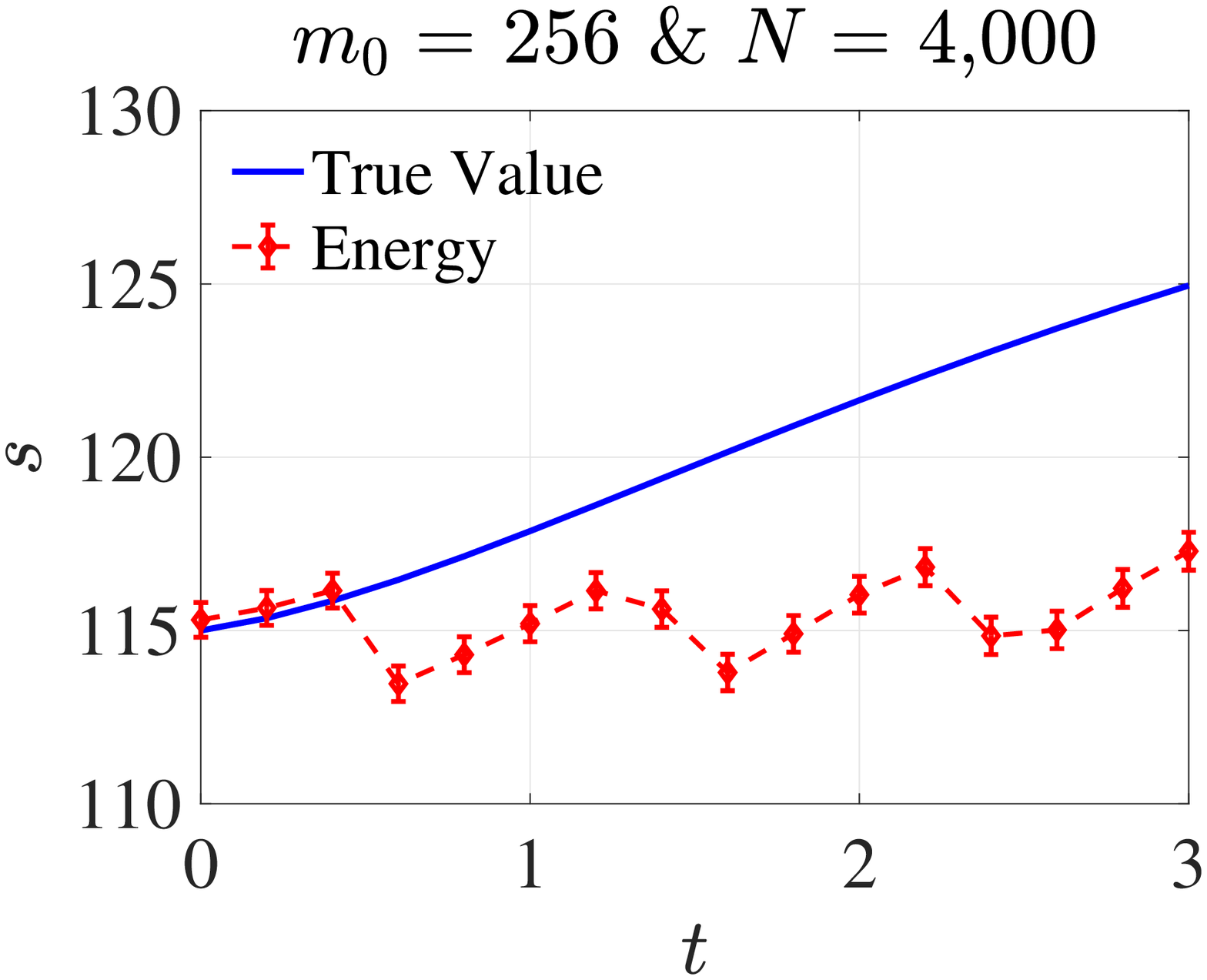} 
       \endminipage 
   \minipage[t]{0.33 \textwidth}
     \centering
      \includegraphics[width=\textwidth]{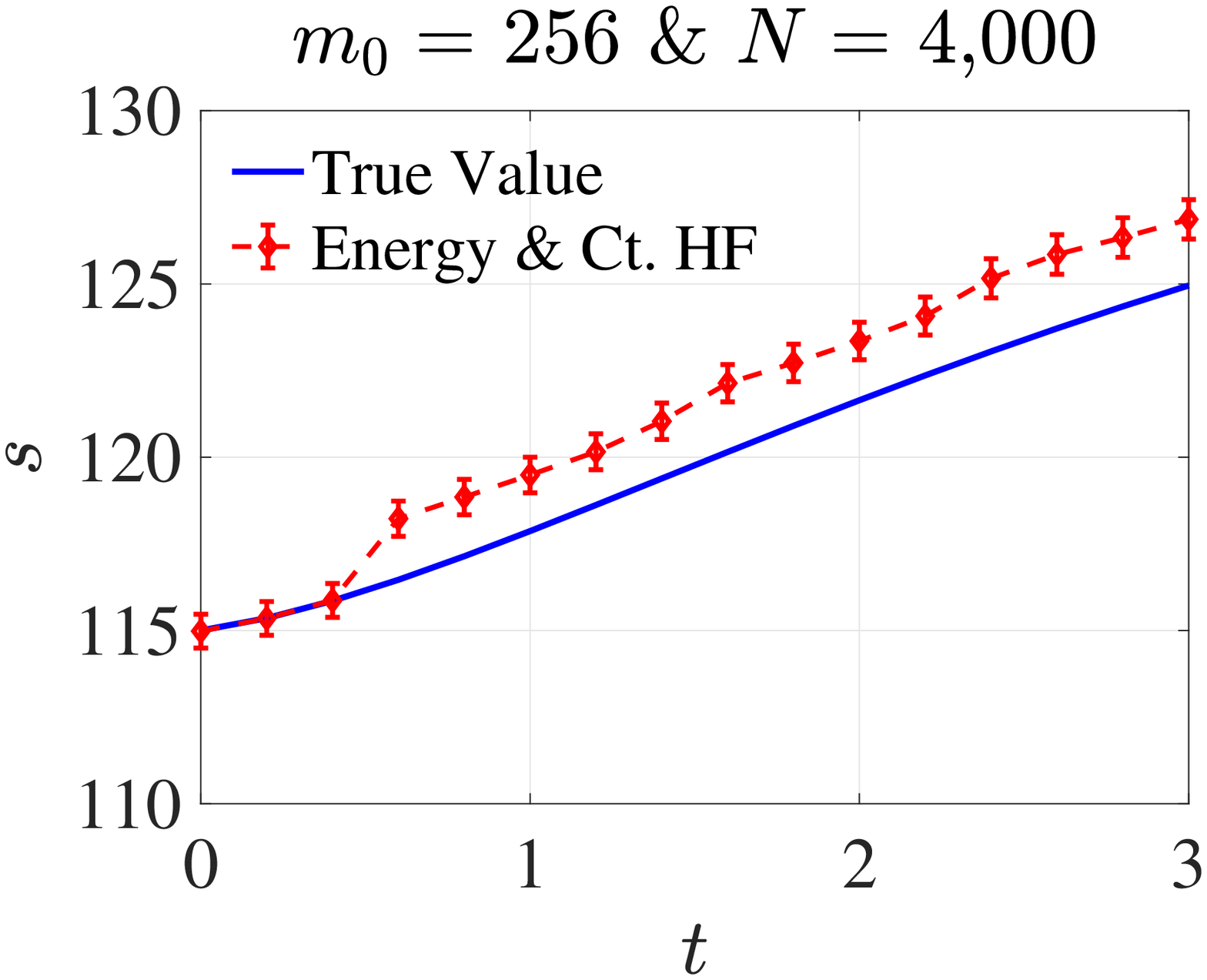}
       \endminipage 
    \minipage[t]{0.33 \textwidth}
     \centering
      \includegraphics[width=\textwidth]{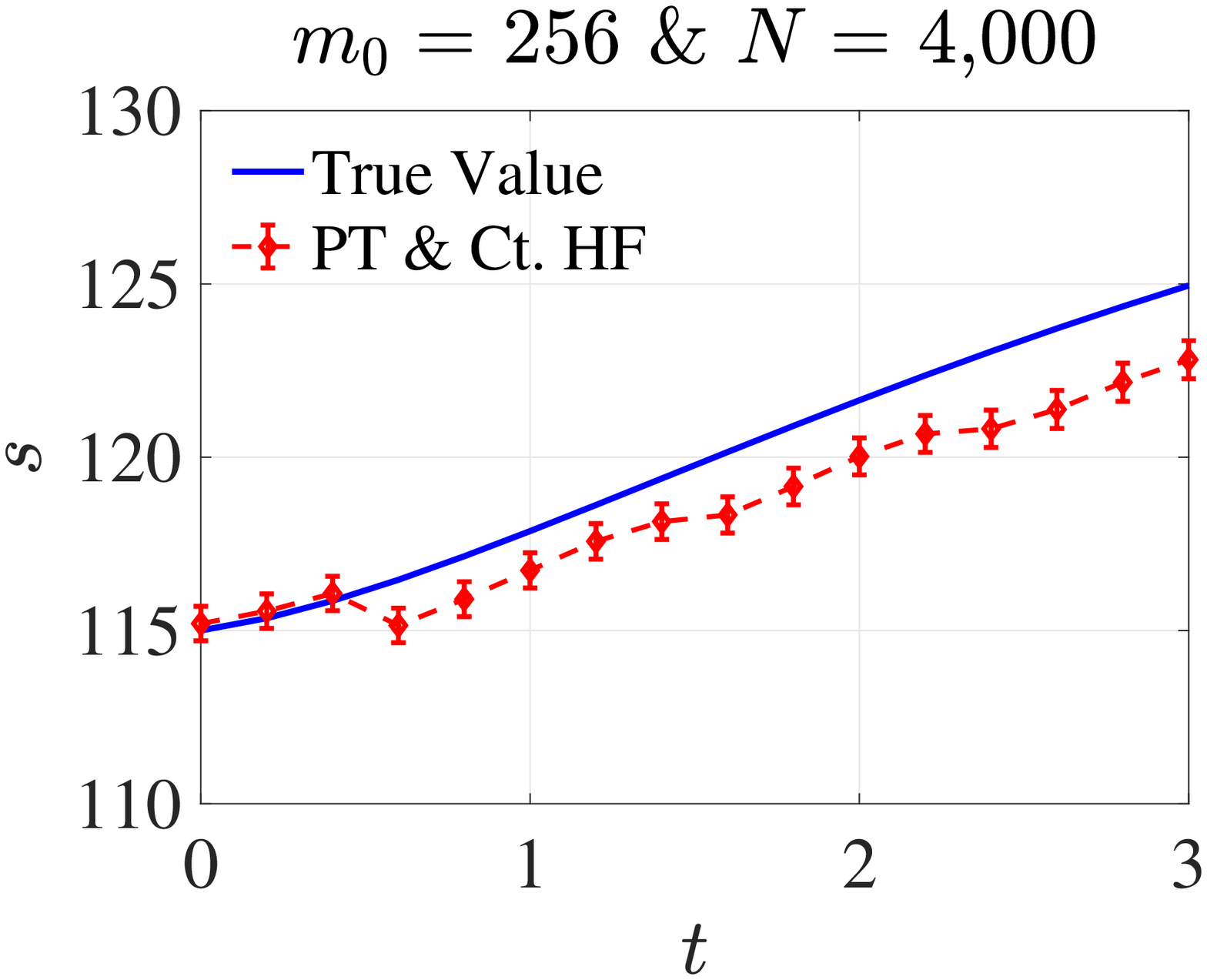}
       \endminipage \medskip \hfill
       \minipage[t]{0.33 \textwidth}
     \centering
     \includegraphics[width=\textwidth]{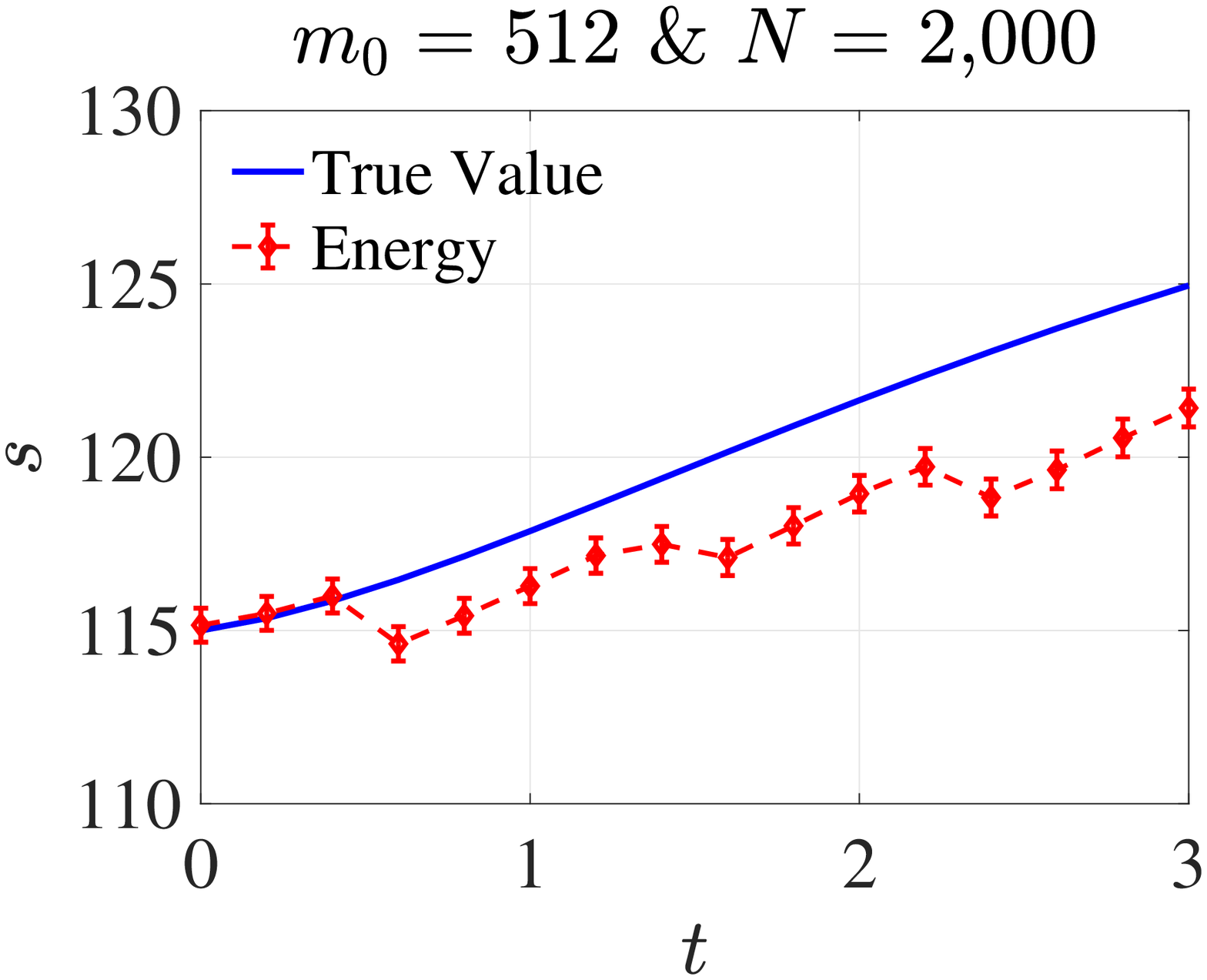} 
       \endminipage
   \minipage[t]{0.33 \textwidth}
     \centering
      \includegraphics[width=\textwidth]{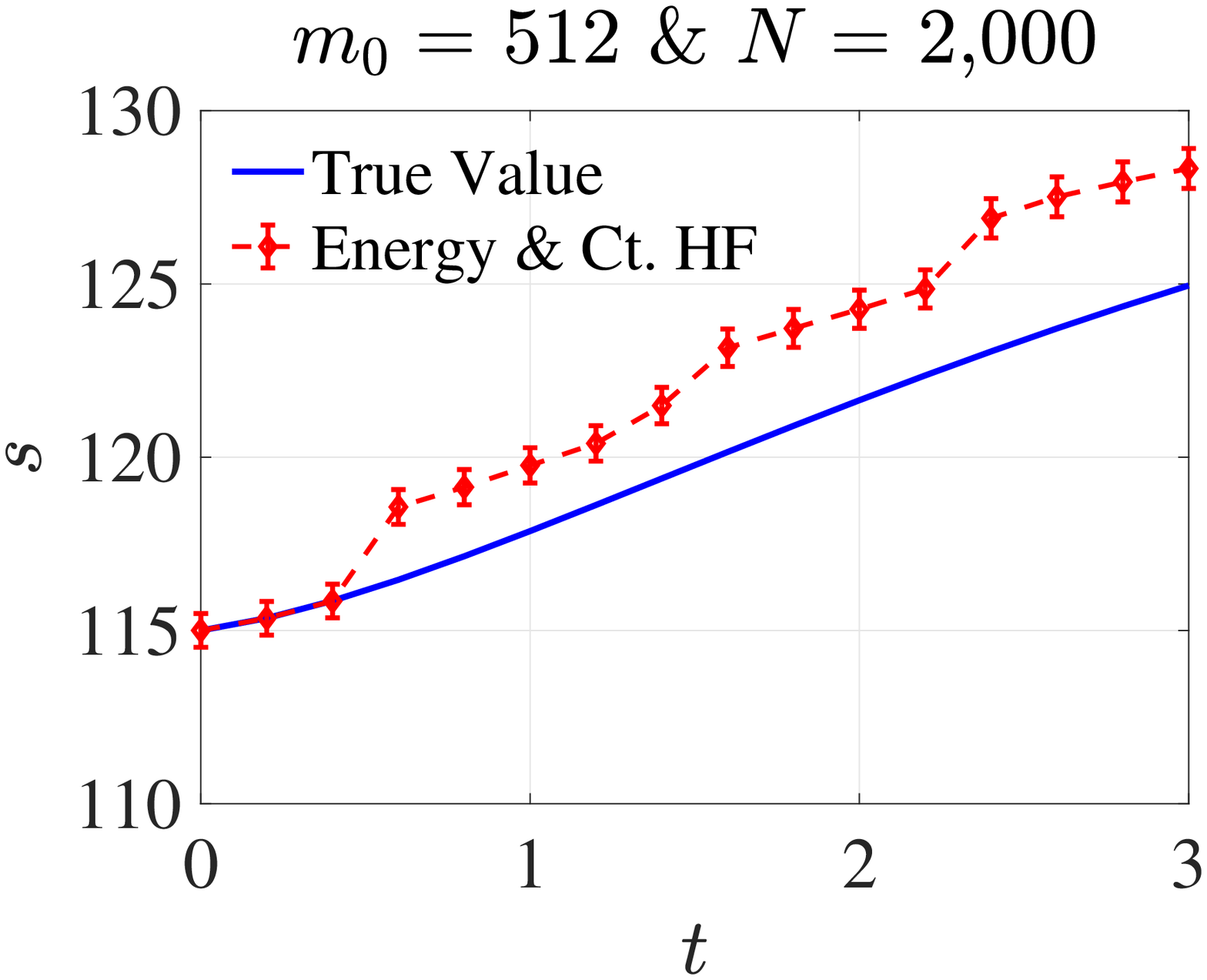}
       \endminipage 
    \minipage[t]{0.33 \textwidth}
     \centering
      \includegraphics[width=\textwidth]{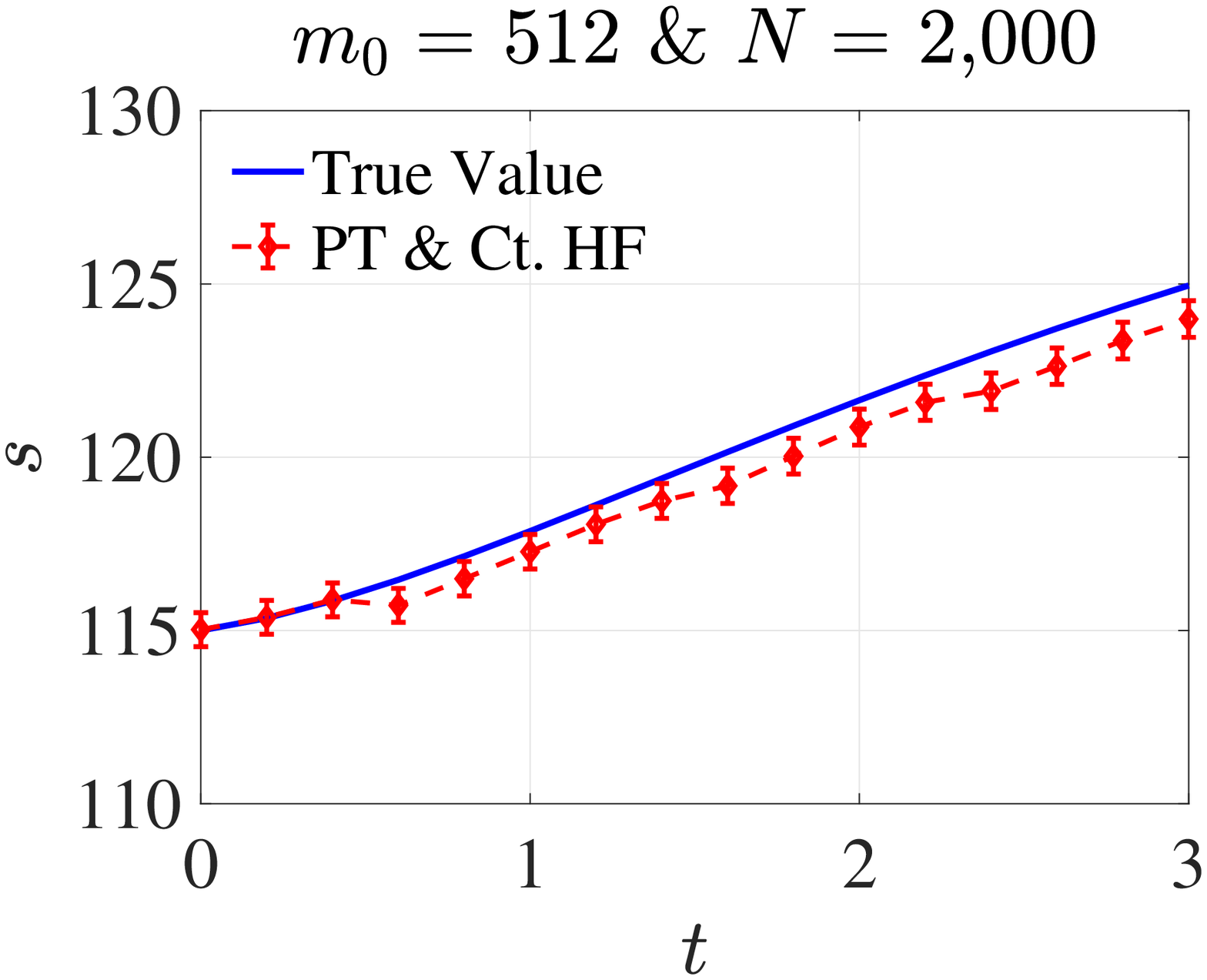}
       \endminipage \medskip \hfill
 \minipage[t]{0.33 \textwidth}
     \centering
     \includegraphics[width=\textwidth]{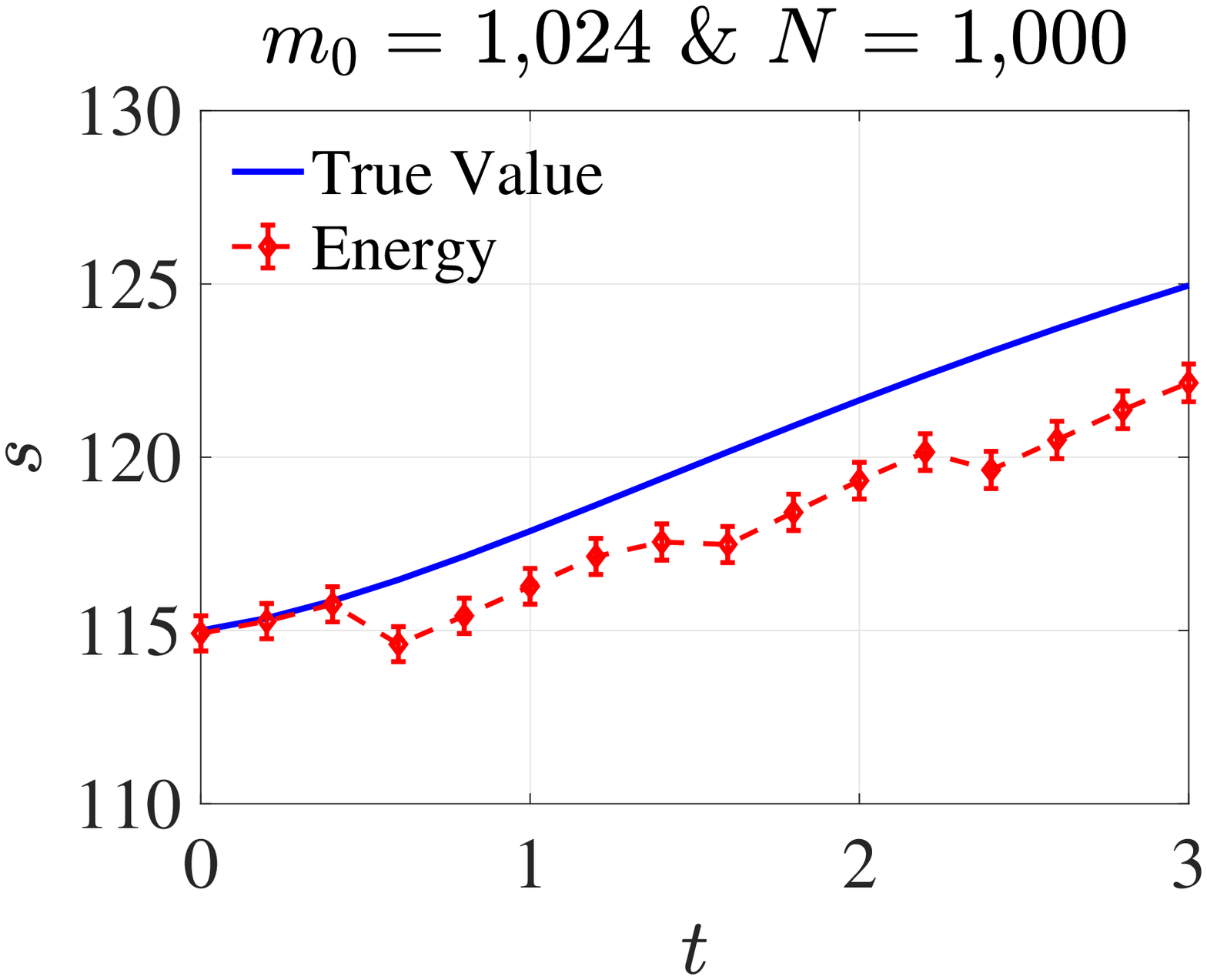} 
       \endminipage
   \minipage[t]{0.33 \textwidth}
     \centering
      \includegraphics[width=\textwidth]{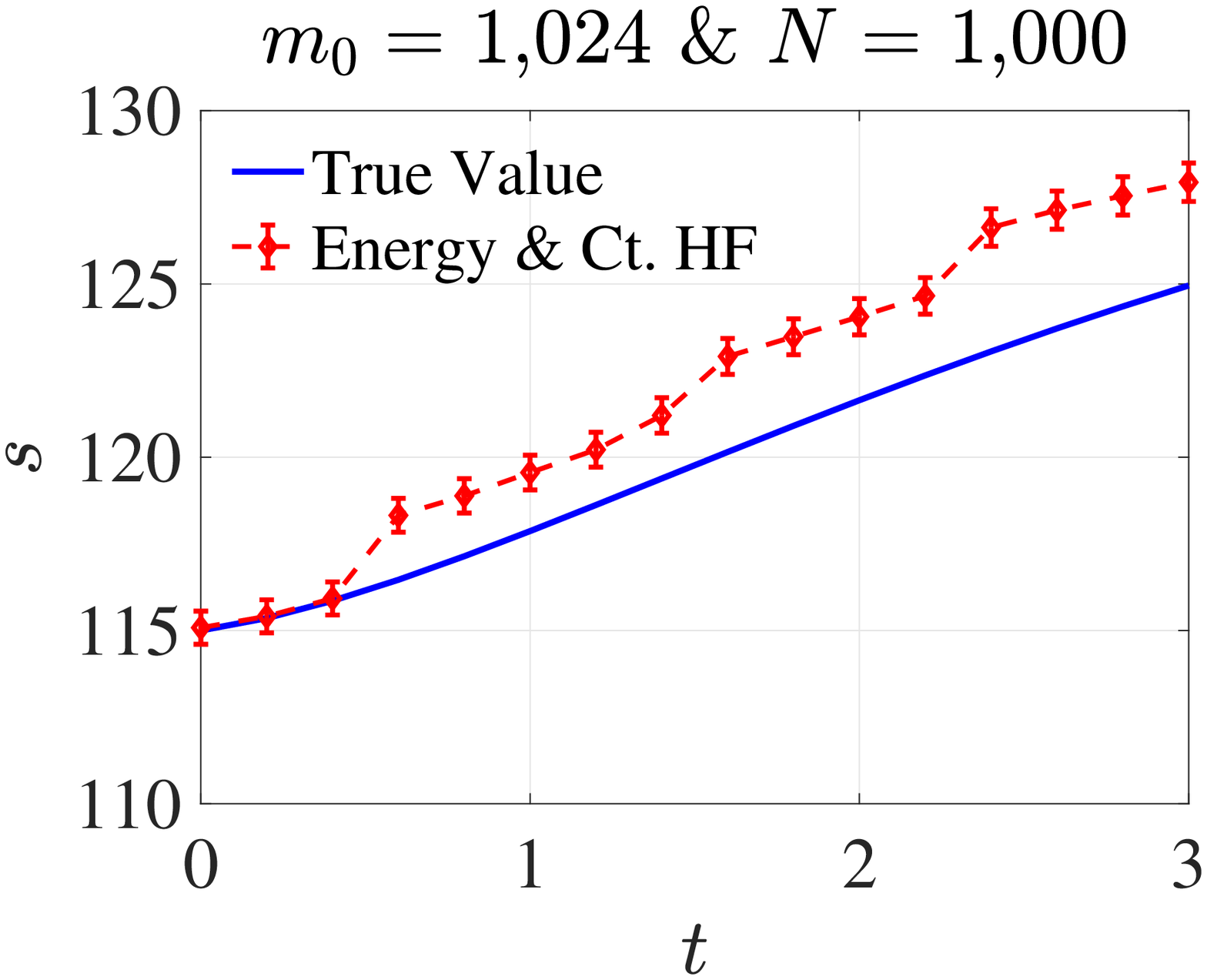}
       \endminipage 
    \minipage[t]{0.33 \textwidth}
     \centering
      \includegraphics[width=\textwidth]{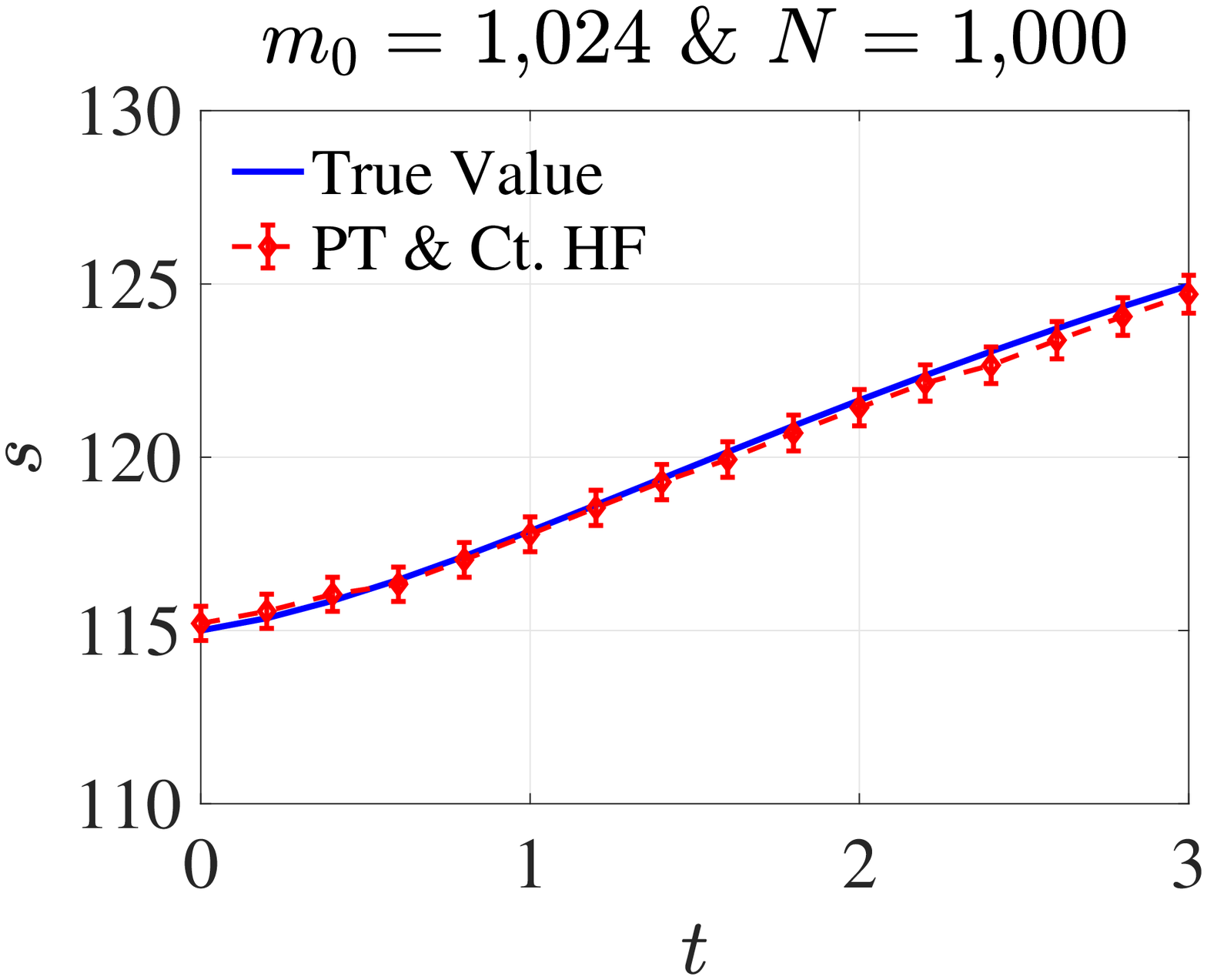}
       \endminipage \medskip \hfill
 \minipage[t]{0.33 \textwidth}
     \centering
     \includegraphics[width=\textwidth]{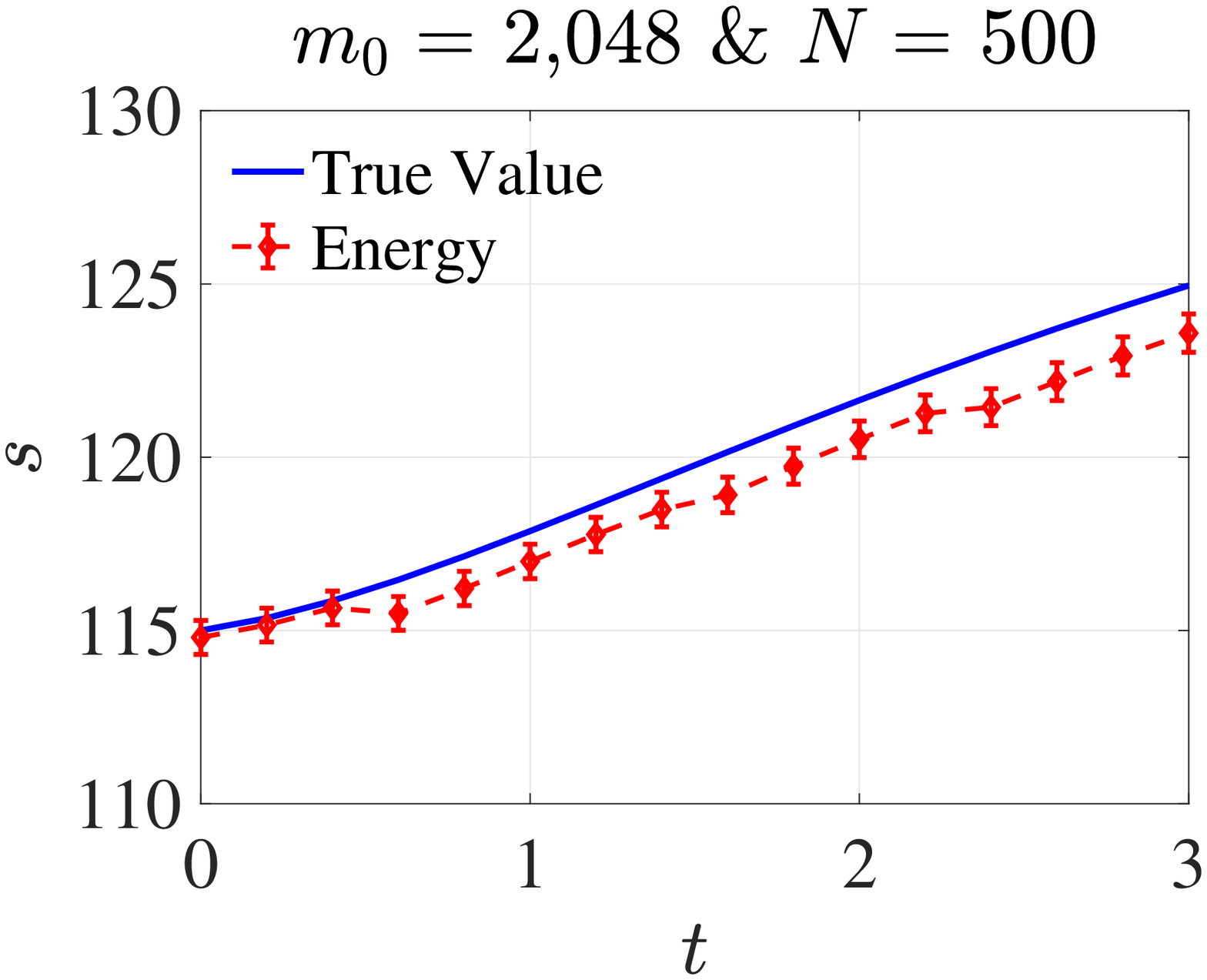} 
       \endminipage
   \minipage[t]{0.33 \textwidth}
     \centering
      \includegraphics[width=\textwidth]{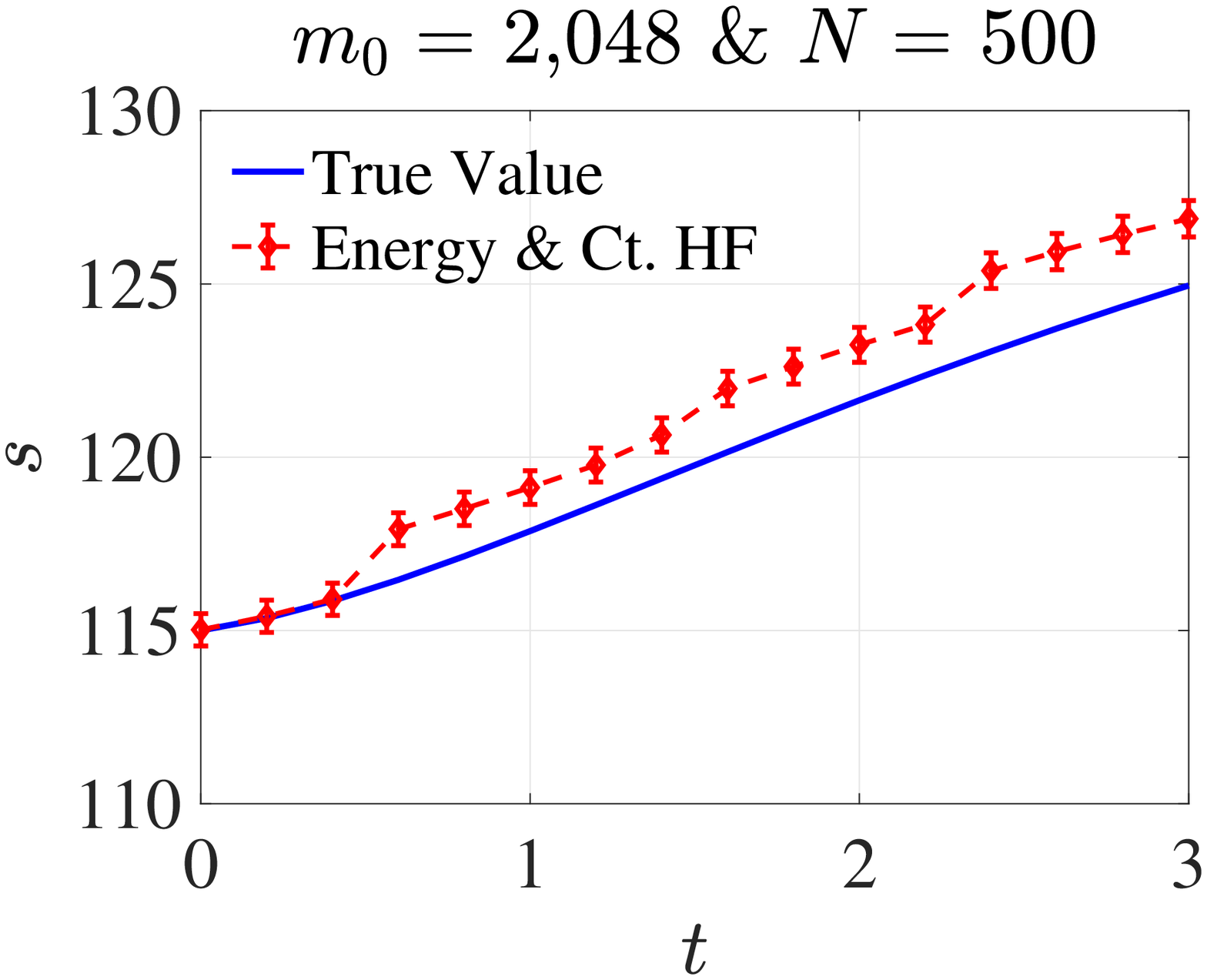}
       \endminipage 
    \minipage[t]{0.33 \textwidth}
     \centering
      \includegraphics[width=\textwidth]{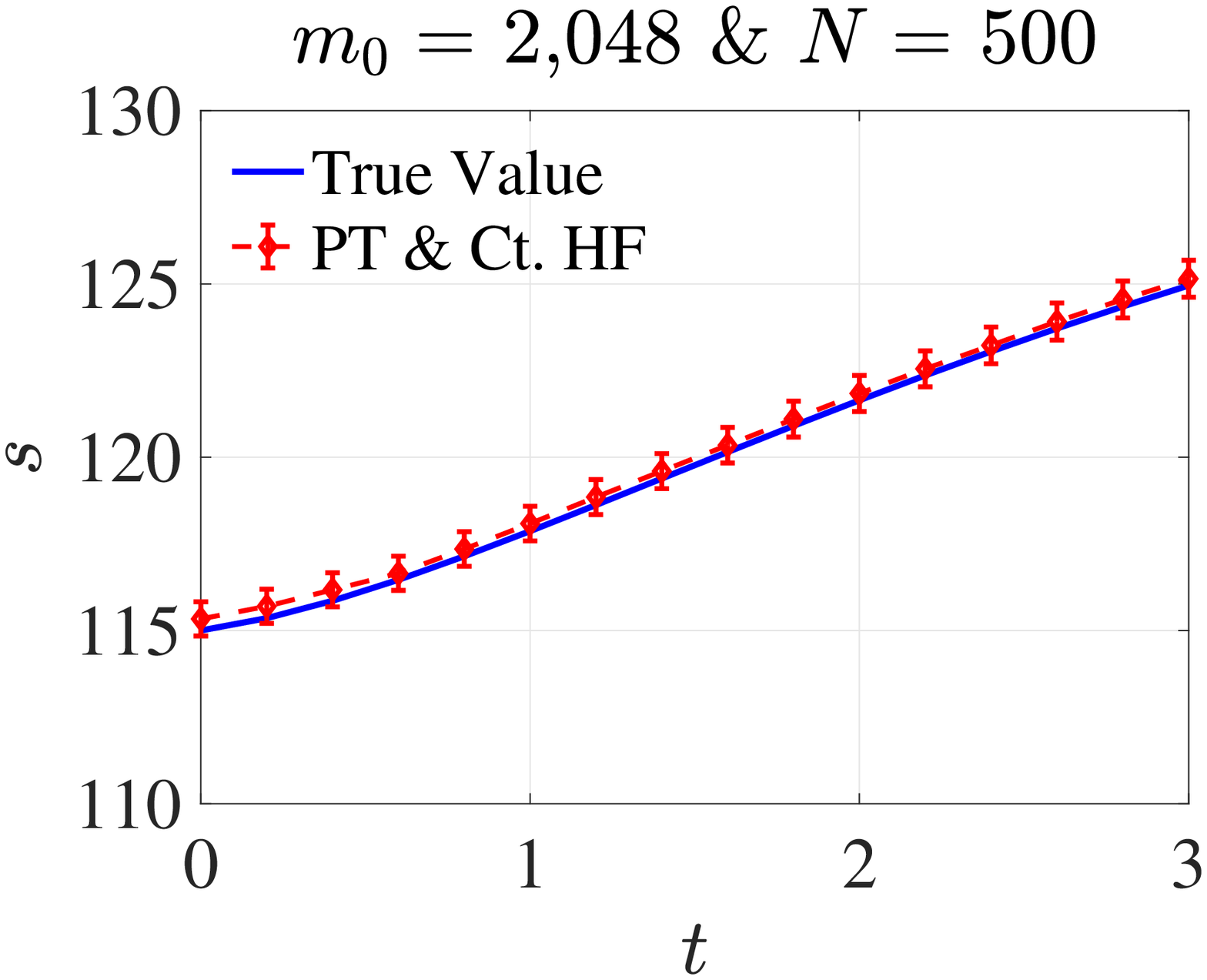}
       \endminipage \medskip \hfill
 \minipage[t]{0.33 \textwidth}
     \centering
     \includegraphics[width=\textwidth]{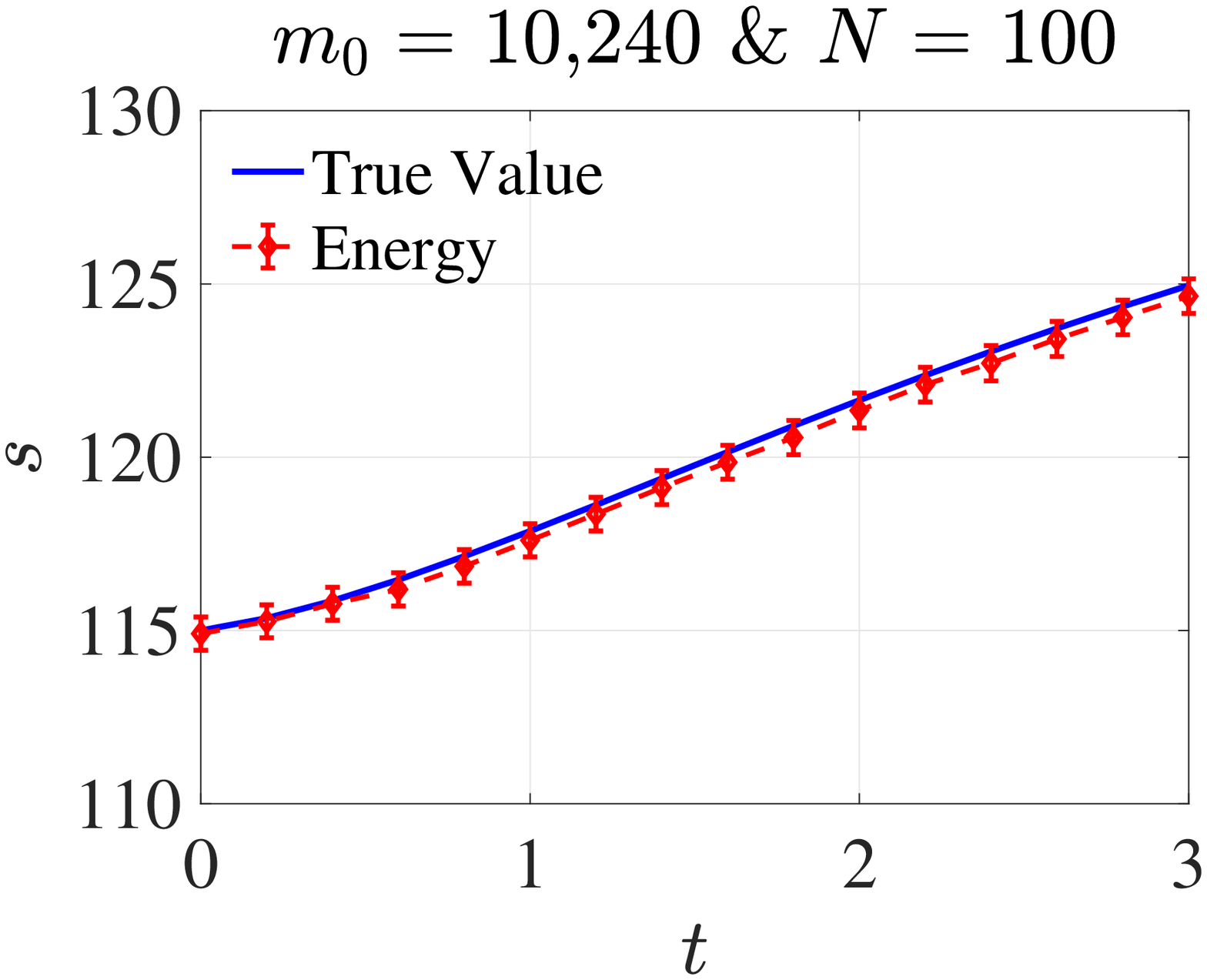} 
       \endminipage
   \minipage[t]{0.33 \textwidth}
     \centering
      \includegraphics[width=\textwidth]{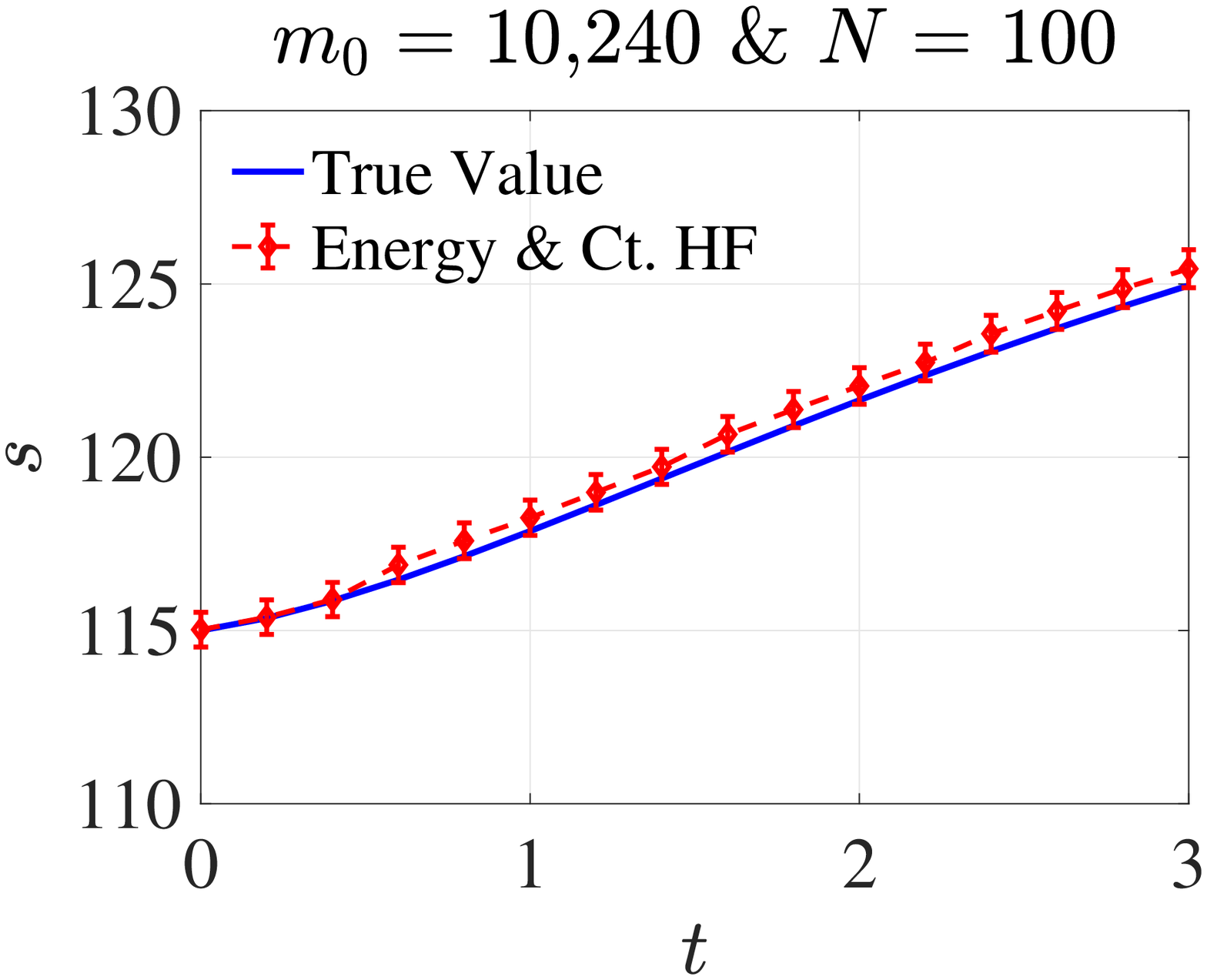}
       \endminipage 
    \minipage[t]{0.33 \textwidth}
     \centering
      \includegraphics[width=\textwidth]{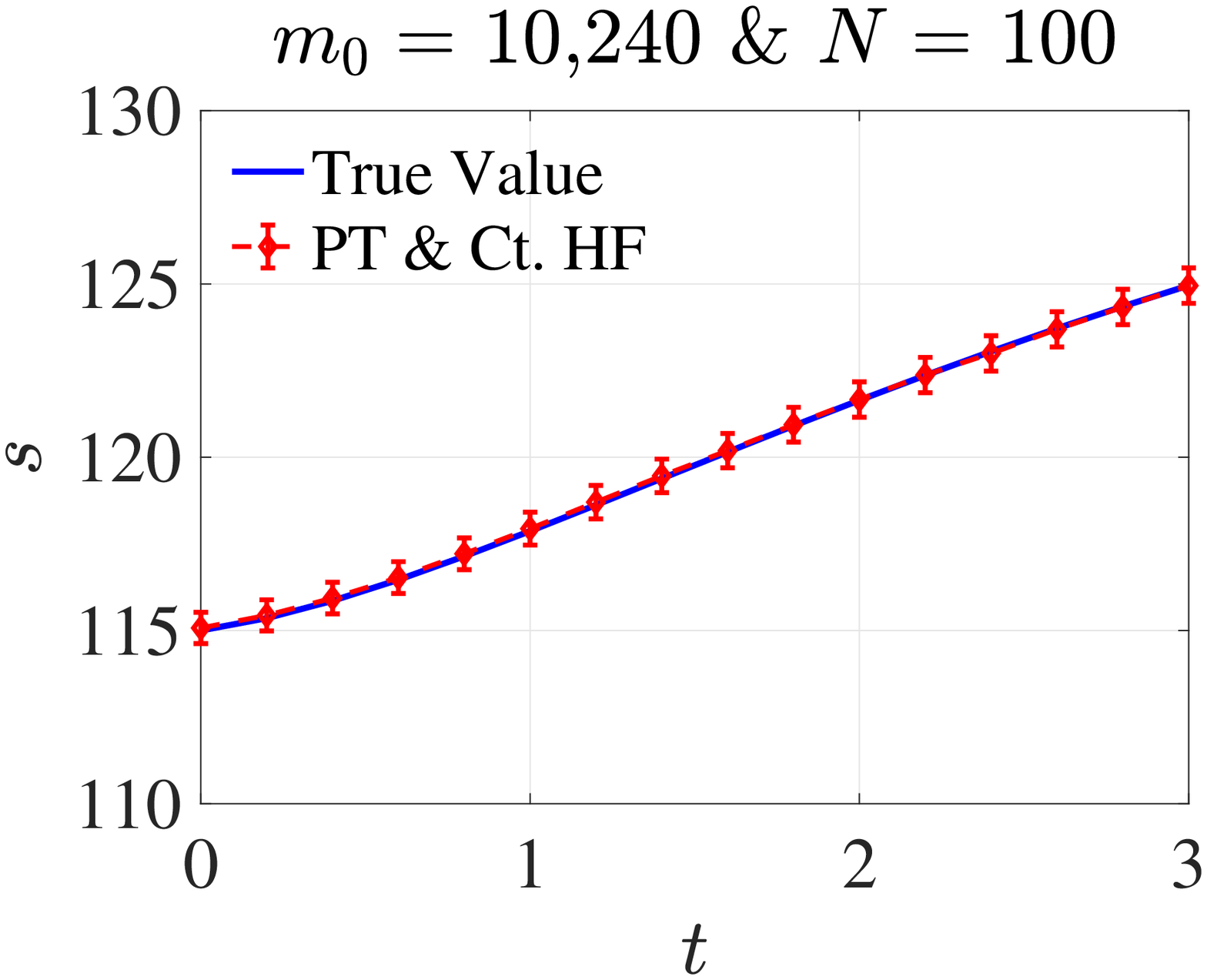}
       \endminipage \hfill
    \caption{Evolution of the scalar fourth order moment, s, as a function of time, together with $99.9\%$ confidence intervals, for different initial numbers of computational particles, $m_0$, per ensemble and different number of ensembles, $N$. For these results the total number of computational particles, $m_0 \times N$, was kept constant.  We show the results for the energy scheme (left column), energy and central heat flux scheme (middle column), and pressure tensor and central heat flux scheme (right column).}
\end{figure}

To summarize our conclusions so far, the new  reduction scheme provides improved accuracy at a significantly reduced computational cost.
 To provide additional evidence for this conclusion, we performed a second set of simulations where we fixed the total number of computational particles, $m_0 \times N$, to be 1,024,000. This value was kept constant to obtain approximately equal sized confidence intervals for all the simulations. 
The results for these simulations are shown in the right column of \cref{ABseries}
and in \cref{fig2: s}.
Comparing the errors and the half width of the confidence intervals for $\Pi_{1,1}$  in \cref{ABseries} (top right panel), we observe that our scheme
is accurate even with $m_0 = 256$, while the other two schemes require a larger initial number of computational particles. On the other hand, for $\mathbf{h}_2$
the results obtained with all three reduction schemes are acceptable  for all the values of $m_0$. In the case of the fourth order moment, for both reduction schemes of Rjasanow and Wagner,  the error lies within the confidence interval only for $m_0 =$ 10,240. On the other hand, the errors for our scheme lies within the confidence intervals for $m_0 \geq 1,024$.
In the right column of \cref{ABseries}, for each moment, the confidence intervals only depend on the total number of computational particles, $m_0 \times N$, and not on $m_0$ or the choice of reduction scheme.  Furthermore, in \cref{table:timings} we see that as $m_0$ decreases the computational time decreases. 
As a consequence, for each  moment, for the same level of accuracy the computational time for our reduction scheme is significantly less than that for the other two reduction schemes. We also observe this phenomenon in \cref{fig2: s}. 
For example, we observe the same degree of accuracy in the fourth order moment for our reduction scheme with $m_0 =$ 1,024 and $N =$ 1,000 (the right panel in the third row) as for the other two methods with $m_0 =$ 10,240 and $N =$ 100 (the left and middle panels in the last row). However, the computational time of 43 seconds for the two reduction schemes of Rjasanow and Wagner is reduced by $42\%$ to 25 seconds for our scheme.

\section{Conclusions}
\label{sec:conclusions}
We have confirmed
 that the reduction scheme of Rjasanow and Wagner that conserves total weight, momentum, energy and central heat flux of a group does not conserve the raw heat flux in each group. Consequently, the raw and central heat flux of the entire system are not conserved. We resolved this problem by devising a new reduction scheme that conserves the total weight, momentum, pressure tensor and heat flux within each group. Conservation of these moments within a group results in the conservation of all of the moments up to the second order, and both raw and central heat flux among the third order moments of a group. This further leads to the preservation of these moments for the entire system.

To examine the accuracy of our new reduction scheme, we performed simulation studies to analyze the convergence of $\Pi_{1,1}$, $\mathbf{h}_2$, and the scalar fourth order moment for the existing and new reduction schemes. The new reduction scheme leads to the convergence of these moments, particularly the scalar fourth order moment, with significantly less computational cost compared to the existing deterministic reduction schemes. This shows that the preservation of additional moments in the new reduction scheme conserves the higher moments with better accuracy, and also minimizes the reduction error.

Although the conservation of higher-order moments reduces the systematic error introduced by the reduction process,
the clustering technique must also be carefully designed in order 
to accurately and efficiently compute the low-probability
tails of the  velocity pdf. 
Specifically,  since the tails occupy a proportionately large volume of phase space, we
need to ensure that  particles in the tails
that are assigned to the same group are sufficiently close together.   
In  a forthcoming article we will
use the  proof of the convergence theorem for the SWPM~\cite{rjasanow2005stochastic}
to develop  such a clustering algorithm. In combination with the 
reduction scheme introduced in this paper, we will demonstrate that this 
leads to a more   efficient method for the computation of tail functionals.


\bibliographystyle{siamplain}
\bibliography{references}

\end{document}


\maketitle

For these simulations, we show the results for the energy reduction scheme in \cref{tab: table1}. In the first column we show the initial number of computational particles, $m_0$. In the second column we show the maximum error of the $(1,1)$-component of the momentum flux tensor, $\Pi_{1,1}$, over the time interval $[0,3]$, and in the third column we show the maximum half width of the confidence interval for $\Pi_{1,1}$. Similarly, in the fourth and fifth columns we show the same information for the the second component of the raw heat flux, $\mathbf{h}_2$, and in the sixth and seventh columns we show it for the scalar fourth moment, $s$. In the last column we show the computational time in seconds. We observe that the errors for $\Pi_{1,1}$ and $\mathbf{h}_2$ are within the confidence interval even for a small number of computational particles. However, the errors for the fourth moment $s$ are larger than the half width of the confidence interval even for $m_0 =$ 10,240.  

   In \cref{tab: table2}, we show the same data as in \cref{tab: table1} for the energy and central heat flux conservation reduction scheme, and in \cref{tab: table3} we show the results for the pressure tensor and central heat flux conservation reduction scheme. 

  In \cref{fig1: M11}, for each of the three reduction schemes, we compare the time evolution of (1,1)-component of the momentum flux tensor, $\Pi_{1,1}$, to the analytical formula for $\Pi_{1,1}$ given in \cite{rjasanow2005stochastic}. In the left column of \cref{fig1: M11}, we plot the time evolution of the momentum flux tensor component for the energy conservation reduction scheme for the values of $m_0$ given in the first column of \cref{tab: table1}. In the middle and right columns of \cref{fig1: M11}, we show the corresponding results for the energy and central heat flux conservation scheme, and the pressure tensor and central heat flux conservation scheme, respectively. For all the three reduction schemes, we see that the momentum flux tensor component is approximated even for $m_0 = 256$.
  
 Similarly, in \cref{fig1: h2} we compare the time evolution of second component of the raw heat flux, and in \cref{fig1: s} we compare the time evolution  of the fourth order moment. In \cref{fig1: h2}, we see that all three reduction schemes approximate the heat flux component. However, the plots for the pressure tensor and central heat flux conservation scheme on the right seem to overlap with the analytical curve more precisely than for the other two reduction schemes. 

\begin{table}[!htbp]
    {\footnotesize
    \caption{Energy Conservation Scheme ($N =$ 500 ensembles).}
    \label{tab: table1}
    \begin{center}
    \begin{tabular}{|R|R|R|R|R|R|R|R|} 
     \cline{2-7}
      \multicolumn{1}{c|}{}& \multicolumn{2}{c|}{$\Pi_{1,1}$}& \multicolumn{2}{c|}{$\mathbf{h}_{2}$}&  \multicolumn{2}{c|}{$s$} \\
     \hline
      $m_0$ & ${||E||}_{\infty}$& $CI_{\max}$ & ${||E||}_{\infty}$ & $CI_{\max}$ & ${||E||}_{\infty}$ & $CI_{\max}$ & $t$ (sec) \\
       \hline       
      256 & 0.0093 &0.0110 & 0.0045 & 0.0170 & 0.0652 & 0.0117 &  2.84\\
      512 & 0.0043 & 0.0077 & 0.0068 & 0.0118 & 0.0283 & 0.0088 & 6.37\\
       1,024 & 0.0055 &  0.0052 & 0.0018 & 0.0087 & 0.0238 & 0.0062 & 14.31 \\
       2,048 & 0.00052 & 0.0036 & 0.0018 & 0.0061 &  0.0100 & 0.0043 & 33.37 \\
       10,240 & 0.0007 & 0.0017 & 0.00029 & 0.0026 & 0.0021 & 0.0020 & 223.25 \\
       \hline
    \end{tabular}
  \end{center}
  }
\end{table}    

  \begin{table}[!htbp]
  {\footnotesize 
    \caption{Energy and Central Heat Flux Conservation Scheme ($N =$ 500 ensembles).}
    \label{tab: table2}
    \begin{center}
    \begin{tabular}{|R|R|R|R|R|R|R|R|} 
      \cline{2-7}
      \multicolumn{1}{c|}{}& \multicolumn{2}{c|}{$\Pi_{1,1}$}& \multicolumn{2}{c|}{$\mathbf{h}_{2}$}&  \multicolumn{2}{c|}{$s$} \\
     \hline
      $m_0$ & ${||E||}_{\infty}$& $CI_{\max}$ & ${||E||}_{\infty}$ & $CI_{\max}$ & ${||E||}_{\infty}$ & $CI_{\max}$ & $t$ (sec) \\
       \hline       
      256 & 0.0098 & 0.0104 & 0.0050 & 0.0170 & 0.0196 & 0.0135 & 3.03 \\
      512 & 0.0070 & 0.0071 & 0.0024 & 0.0120 & 0.0281 & 0.0088 & 6.84 \\
       1,024 & 0.0034 & 0.0052 & 0.0041 & 0.0084 & 0.0231 & 0.0062 & 15.16 \\
       2,048 & 0.00065 & 0.0037 & 0.00045 & 0.0058 & 0.0154 & 0.0046 & 34.38 \\
       10,240 & 0.00098 & 0.0017 & 0.0015 & 0.0026 & 0.0056 & 0.0020 & 226.81 \\
       \hline
    \end{tabular}
  \end{center}
  }
\end{table}    

 \begin{table}[!htbp]
  {\footnotesize
    \caption{Pressure Tensor and Central Heat Flux Conservation Scheme ($N =$ 500 ensembles).} 
    \label{tab: table3}
    \begin{center}
    \begin{tabular}{|R|R|R|R|R|R|R|R|} 
 \cline{2-7}
      \multicolumn{1}{c|}{}& \multicolumn{2}{c|}{$\Pi_{1,1}$}& \multicolumn{2}{c|}{$\mathbf{h}_{2}$}&  \multicolumn{2}{c|}{$s$} \\
     \hline
      $m_0$ & ${||E||}_{\infty}$& $CI_{\max}$ & ${||E||}_{\infty}$ & $CI_{\max}$ & ${||E||}_{\infty}$ & $CI_{\max}$ & $t$ (sec) \\
       \hline       
      256 & 0.0020 & 0.0106 & 0.0034 & 0.0165 & 0.0164 & 0.0124 & 2.61 \\
      512 & 0.0019 & 0.0076 & 0.0018 & 0.0115 & 0.0097 & 0.0089 & 6.06\\
       1,024 & 0.0005 & 0.0050 & 0.00073 & 0.0083 & 0.0035 & 0.0062 & 13.76\\
       2,048 & 0.00086 & 0.0036 & 0.0005 & 0.0057 & 0.0012 & 0.0042 & 31.26\\
       10,240 & 0.00021 & 0.0017 & 0.00048 & 0.0026 & 0.0006 & 0.0019 & 205.56\\
       \hline
    \end{tabular}
  \end{center}
  }
\end{table}

 \begin{table}[!htbp]
 {\footnotesize
  \caption{Energy Conservation Scheme ($m_0 \times N =$ 1,240,00).}
     \label{tab: 1table1}
       \begin{center}
    \begin{tabular}{|R|R|R|R|R|R|R|R|R|} 
     \cline{3-8}
      \multicolumn{2}{c|}{}& \multicolumn{2}{c|}{$\Pi_{1,1}$}& \multicolumn{2}{c|}{$\mathbf{h}_{2}$}&  \multicolumn{2}{c|}{$s$} \\
     \hline
      $m_0$ & $N$ & ${||E||}_{\infty}$& $CI_{\max}$ & ${||E||}_{\infty}$ & $CI_{\max}$ & ${||E||}_{\infty}$ & $CI_{\max}$ & $t$ (sec) \\
       \hline       
      256 & 4,000 & 0.0098 & 0.0040 & 0.00063 & 0.0061 & 0.0614 & 0.0044 & 22.32\\
      512 & 2,000 & 0.00049 & 0.0037 & 0.00056 & 0.0060 & 0.0283 & 0.0044 & 25.44\\
       1,024 & 1,000 & 0.0047 & 0.0038 & 0.00097 & 0.0059 & 0.0225 & 0.0044 & 28.46\\
       2,048 & 500 & 0.0031 & 0.0039 & 0.0045 & 0.0058 & 0.0110 & 0.0044 & 33.37\\
       10,240 & 100 & 0.00024 & 0.0035 & 0.0016 & 0.0052 & 0.0025 & 0.0040 & 42.74\\
       \hline
    \end{tabular}
  \end{center}
  }
  \end{table}    
  
   \begin{table}[!htbp]
  {\footnotesize
    \caption{Energy and Central Heat Flux Conservation Scheme ($m_0 \times N =$1,240,00).}
    \label{tab: 1table2}
    \begin{center}
    \begin{tabular}{|R|R|R|R|R|R|R|R|R|} 
     \cline{3-8}
      \multicolumn{2}{c|}{}& \multicolumn{2}{c|}{$\Pi_{1,1}$}& \multicolumn{2}{c|}{$\mathbf{h}_{2}$}&  \multicolumn{2}{c|}{$s$} \\
     \hline
      $m_0$ & $N$ & ${||E||}_{\infty}$& $CI_{\max}$ & ${||E||}_{\infty}$ & $CI_{\max}$ & ${||E||}_{\infty}$ & $CI_{\max}$ & $t$ (sec) \\
       \hline       
      256 & 4,000 & 0.0131 & 0.0037 & 0.00023 & 0.0060 & 0.0153 & 0.0046 & 24.25\\
      512 & 2,000 & 0.0051 & 0.0037 & 0.00092 & 0.0060 & 0.0270 & 0.0046 & 26.9\\
       1,024 & 1,000 & 0.0024 & 0.0038 & 0.0030 & 0.0059 & 0.0238 & 0.0044 & 30.02\\
       2,048 & 500 & 0.00072 & 0.0038 & 0.0023 & 0.0056 & 0.0154 & 0.0042 & 34.38\\
       10,240 & 100 & 0.0014 & 0.0038 & 0.0022 & 0.0057 & 0.0039 & 0.0044 & 44.36\\
       \hline
    \end{tabular}
  \end{center}
  }
\end{table}    

 \begin{table}[!htbp]
  {\footnotesize
    \caption{Pressure Tensor and Central Heat Flux Conservation Scheme ($m_0 \times N =$ 1,240,00).} 
    \label{tab: 1table3}
    \begin{center}
    \begin{tabular}{|R|R|R|R|R|R|R|R|R|} 
 \cline{3-8}
      \multicolumn{2}{c|}{}& \multicolumn{2}{c|}{$\Pi_{1,1}$}& \multicolumn{2}{c|}{$\mathbf{h}_{2}$}&  \multicolumn{2}{c|}{$s$} \\
     \hline
      $m_0$ & $N$ & ${||E||}_{\infty}$& $CI_{\max}$ & ${||E||}_{\infty}$ & $CI_{\max}$ & ${||E||}_{\infty}$ & $CI_{\max}$ & $t$ (sec) \\
       \hline       
      256 & 4,000 & 0.00023 & 0.0036 & 0.0023 & 0.0059 & 0.0171 & 0.0044 & 20.85\\
      512 & 2,000 & 0.0013 & 0.0037 & 0.0016 & 0.0056 & 0.0077 & 0.0042 & 22.02 \\
       1,024 & 1,000 & 0.00044 & 0.0035 & 0.0021 & 0.0058 & 0.0020 & 0.0044 & 24.93\\
       2,048 & 500 & 0.0016 & 0.0036 & 0.00078 & 0.0058 & 0.0016 & 0.0043 & 31.26\\
       10,240 & 100 & 0.0011 & 0.0036 & 0.00088 & 0.0064 & 0.00003 & 0.00041 & 39.05\\
       \hline
    \end{tabular}
  \end{center}
  }
\end{table}

\begin{figure}[!htbp] 
  \label{fig1: M11} 
  \minipage[t]{0.33 \textwidth}
    \centering
    \includegraphics[width=\textwidth]{Figures/figM(1,1)Energy(256)} 
  \endminipage 
  \minipage[t]{0.33 \textwidth}
    \centering
    \includegraphics[width=\textwidth]{Figures/figM(1,1)RW(256)}
  \endminipage 
  \minipage[t]{0.33 \textwidth}
    \centering
    \includegraphics[width=\textwidth]{Figures/figM(1,1)Our(256)}
  \endminipage \medskip \hfill
  \minipage[t]{0.33 \textwidth}
    \centering
    \includegraphics[width=\textwidth]{Figures/figM(1,1)Energy(512)} 
  \endminipage
  \minipage[t]{0.33 \textwidth}
    \centering
    \includegraphics[width=\textwidth]{Figures/figM(1,1)RW(512)}
  \endminipage 
  \minipage[t]{0.33 \textwidth}
    \centering
    \includegraphics[width=\textwidth]{Figures/figM(1,1)Our(512)}
  \endminipage \medskip \hfill 
  \minipage[t]{0.33 \textwidth}
    \centering
    \includegraphics[width=\textwidth]{Figures/figM(1,1)Energy(1024)} 
  \endminipage
  \minipage[t]{0.33 \textwidth}
    \centering
    \includegraphics[width=\textwidth]{Figures/figM(1,1)RW(1024)}
   \endminipage 
   \minipage[t]{0.33 \textwidth}
     \centering
     \includegraphics[width=\textwidth]{Figures/figM(1,1)Our(1024)}
   \endminipage \medskip \hfill
   \minipage[t]{0.33 \textwidth}
     \centering
     \includegraphics[width=\textwidth]{Figures/figM(1,1)Energy(2048)} 
   \endminipage
   \minipage[t]{0.33 \textwidth}
     \centering
     \includegraphics[width=\textwidth]{Figures/figM(1,1)RW(2048)}
   \endminipage 
   \minipage[t]{0.33 \textwidth}
     \centering
     \includegraphics[width=\textwidth]{Figures/figM(1,1)Our(2048)}
   \endminipage \medskip \hfill 
   \minipage[t]{0.33 \textwidth}
     \centering
     \includegraphics[width=\textwidth]{Figures/figM(1,1)Energy(10240)} 
   \endminipage
   \minipage[t]{0.33 \textwidth}
     \centering
     \includegraphics[width=\textwidth]{Figures/figM(1,1)RW(10240)}
   \endminipage 
   \minipage[t]{0.33 \textwidth}
     \centering
     \includegraphics[width=\textwidth]{Figures/figM(1,1)Our(10240)}
    \endminipage \hfill 
    \caption{Evolution of the (1,1)-component of the momentum flux tensor, $\Pi_{1,1}$, as a function of time together with $99.9\%$ confidence intervals. In the different rows we show the results for different initial numbers of computational particles, $m_0$, per ensemble. We used $N =$ 500 ensembles in all the panels. We show the results for the energy scheme (left column), energy and central heat flux scheme (middle column), and pressure tensor and central heat flux scheme (right column).}
\end{figure}

\begin{figure}[!htbp] 
   \label{fig1: h2} 
   \minipage[t]{0.33 \textwidth}
     \centering
     \includegraphics[width=\textwidth]{Figures/fighEnergy(256)} 
       \endminipage 
   \minipage[t]{0.33 \textwidth}
     \centering
      \includegraphics[width=\textwidth]{Figures/fighRW(256)}
       \endminipage 
    \minipage[t]{0.33 \textwidth}
     \centering
      \includegraphics[width=\textwidth]{Figures/fighOur(256)}
       \endminipage \medskip \hfill
       \minipage[t]{0.33 \textwidth}
     \centering
     \includegraphics[width=\textwidth]{Figures/fighEnergy(512)} 
       \endminipage
   \minipage[t]{0.33 \textwidth}
     \centering
      \includegraphics[width=\textwidth]{Figures/fighRW(512)}
       \endminipage 
    \minipage[t]{0.33 \textwidth}
     \centering
      \includegraphics[width=\textwidth]{Figures/fighOur(512)}
       \endminipage \medskip \hfill 
 \minipage[t]{0.33 \textwidth}
     \centering
     \includegraphics[width=\textwidth]{Figures/fighEnergy(1024)} 
       \endminipage
   \minipage[t]{0.33 \textwidth}
     \centering
      \includegraphics[width=\textwidth]{Figures/fighRW(1024)}
       \endminipage 
    \minipage[t]{0.33 \textwidth}
     \centering
      \includegraphics[width=\textwidth]{Figures/fighOur(1024)}
       \endminipage \medskip \hfill
 \minipage[t]{0.33 \textwidth}
     \centering
     \includegraphics[width=\textwidth]{Figures/fighEnergy(2048)} 
       \endminipage
   \minipage[t]{0.33 \textwidth}
     \centering
      \includegraphics[width=\textwidth]{Figures/fighRW(2048)}
       \endminipage 
    \minipage[t]{0.33 \textwidth}
     \centering
      \includegraphics[width=\textwidth]{Figures/fighOur(2048)}
       \endminipage \medskip \hfill 
 \minipage[t]{0.33 \textwidth}
     \centering
     \includegraphics[width=\textwidth]{Figures/fighEnergy(10240)} 
       \endminipage
   \minipage[t]{0.33 \textwidth}
     \centering
      \includegraphics[width=\textwidth]{Figures/fighRW(10240)}
       \endminipage 
    \minipage[t]{0.33 \textwidth}
     \centering
      \includegraphics[width=\textwidth]{Figures/fighOur(10240)}
       \endminipage \hfill 
     \caption{Evolution of the second component of the raw heat flux, $\mathbf{h}_2$, as a function of time together with $99.9\%$ confidence intervals. In the different rows we show the results for different initial numbers of computational particles, $m_0$, per ensemble. We used $N =$ 500 ensembles in all the panels. We show the results for the energy scheme (left column), energy and central heat flux scheme (middle column), and pressure tensor and central heat flux scheme (right column).}
\end{figure}

\begin{figure}[!htbp] 
   \label{fig2: M11}
   \minipage[t]{0.33 \textwidth}
     \centering
     \includegraphics[width=\textwidth]{Figures/fig2M(1,1)Energy(256)} 
       \endminipage 
   \minipage[t]{0.33 \textwidth}
     \centering
      \includegraphics[width=\textwidth]{Figures/fig2M(1,1)RW(256)}
       \endminipage 
    \minipage[t]{0.33 \textwidth}
     \centering
      \includegraphics[width=\textwidth]{Figures/fig2M(1,1)Our(256)}
       \endminipage \medskip \hfill
       \minipage[t]{0.33 \textwidth}
     \centering
     \includegraphics[width=\textwidth]{Figures/fig2M(1,1)Energy(512)} 
       \endminipage
   \minipage[t]{0.33 \textwidth}
     \centering
      \includegraphics[width=\textwidth]{Figures/fig2M(1,1)RW(512)}
       \endminipage 
    \minipage[t]{0.33 \textwidth}
     \centering
      \includegraphics[width=\textwidth]{Figures/fig2M(1,1)Our(512)}
       \endminipage \medskip \hfill
 \minipage[t]{0.33 \textwidth}
     \centering
     \includegraphics[width=\textwidth]{Figures/fig2M(1,1)Energy(1024)} 
       \endminipage
   \minipage[t]{0.33 \textwidth}
     \centering
      \includegraphics[width=\textwidth]{Figures/fig2M(1,1)RW(1024)}
       \endminipage 
    \minipage[t]{0.33 \textwidth}
     \centering
      \includegraphics[width=\textwidth]{Figures/fig2M(1,1)Our(1024)}
       \endminipage \medskip \hfill
 \minipage[t]{0.33 \textwidth}
     \centering
     \includegraphics[width=\textwidth]{Figures/fig2M(1,1)Energy(2048)} 
       \endminipage
   \minipage[t]{0.33 \textwidth}
     \centering
      \includegraphics[width=\textwidth]{Figures/fig2M(1,1)RW(2048)}
       \endminipage 
    \minipage[t]{0.33 \textwidth}
     \centering
      \includegraphics[width=\textwidth]{Figures/fig2M(1,1)Our(2048)}
       \endminipage \medskip \hfill
 \minipage[t]{0.33 \textwidth}
     \centering
     \includegraphics[width=\textwidth]{Figures/fig2M(1,1)Energy(10240)} 
       \endminipage
   \minipage[t]{0.33 \textwidth}
     \centering
      \includegraphics[width=\textwidth]{Figures/fig2M(1,1)RW(10240)}
       \endminipage 
    \minipage[t]{0.33 \textwidth}
     \centering
      \includegraphics[width=\textwidth]{Figures/fig2M(1,1)Our(10240)}
       \endminipage \hfill
     \caption{Evolution of the (1,1)-component of the momentum flux tensor, $\Pi_{1,1}$, as a function of time, together with $99.9\%$ confidence intervals, for different initial numbers of computational particles, $m_0$, per ensemble and different number of ensembles, $N$. For these results the total number of computational particles, $m_0 \times N$, was kept constant.  We show the results for the energy scheme (left column), energy and central heat flux scheme (middle column), and pressure tensor and central heat flux scheme (right column).}
\end{figure} 

\begin{figure}[!htbp] 
   \label{fig2: h2}
   \minipage[t]{0.33 \textwidth}
     \centering
     \includegraphics[width=\textwidth]{Figures/fig2hEnergy(256)} 
       \endminipage 
   \minipage[t]{0.33 \textwidth}
     \centering
      \includegraphics[width=\textwidth]{Figures/fig2hRW(256)}
       \endminipage 
    \minipage[t]{0.33 \textwidth}
     \centering
      \includegraphics[width=\textwidth]{Figures/fig2hOur(256)}
       \endminipage \medskip \hfill
       \minipage[t]{0.33 \textwidth}
     \centering
     \includegraphics[width=\textwidth]{Figures/fig2hEnergy(512)} 
       \endminipage
   \minipage[t]{0.33 \textwidth}
     \centering
      \includegraphics[width=\textwidth]{Figures/fig2hRW(512)}
       \endminipage 
    \minipage[t]{0.33 \textwidth}
     \centering
      \includegraphics[width=\textwidth]{Figures/fig2hOur(512)}
       \endminipage \medskip \hfill
 \minipage[t]{0.33 \textwidth}
     \centering
     \includegraphics[width=\textwidth]{Figures/fig2hEnergy(1024)} 
       \endminipage
   \minipage[t]{0.33 \textwidth}
     \centering
      \includegraphics[width=\textwidth]{Figures/fig2hRW(1024)}
       \endminipage 
    \minipage[t]{0.33 \textwidth}
     \centering
      \includegraphics[width=\textwidth]{Figures/fig2hOur(1024)}
       \endminipage \medskip \hfill
 \minipage[t]{0.33 \textwidth}
     \centering
     \includegraphics[width=\textwidth]{Figures/fig2hEnergy(2048)} 
       \endminipage
   \minipage[t]{0.33 \textwidth}
     \centering
      \includegraphics[width=\textwidth]{Figures/fig2hRW(2048)}
       \endminipage 
    \minipage[t]{0.33 \textwidth}
     \centering
      \includegraphics[width=\textwidth]{Figures/fig2hOur(2048)}
       \endminipage \medskip \hfill
 \minipage[t]{0.33 \textwidth}
     \centering
     \includegraphics[width=\textwidth]{Figures/fig2hEnergy(10240)} 
       \endminipage
   \minipage[t]{0.33 \textwidth}
     \centering
      \includegraphics[width=\textwidth]{Figures/fig2hRW(10240)}
       \endminipage 
    \minipage[t]{0.33 \textwidth}
     \centering
      \includegraphics[width=\textwidth]{Figures/fig2hOur(10240)}
       \endminipage \hfill
     \caption{Evolution of the second component of the raw heat flux, $\mathbf{h}_2$, as a function of time, together with $99.9\%$ confidence intervals, for different initial numbers of computational particles, $m_0$, per ensemble and different number of ensembles, $N$. For these results the total number of computational particles, $m_0 \times N$, was kept constant.  We show the results for the energy scheme (left column), energy and central heat flux scheme (middle column), and pressure tensor and central heat flux scheme (right column).}
\end{figure}